\documentclass[a4paper,11pt]{amsart}
 
\usepackage{fullpage}
\usepackage{tikz}
\usetikzlibrary{matrix,arrows,decorations.pathmorphing}
\usepackage{amsmath}
\numberwithin{equation}{section}
\usepackage{amssymb}
\usepackage{bbm}
\usepackage{comment}
\usepackage{amsthm}
\usepackage[all,cmtip]{xy}
\usepackage[toc,page]{appendix}
\usepackage[british]{babel}
\usepackage{courier}
\usepackage{enumerate}
\usepackage[T1]{fontenc}
\usepackage{listings}
\theoremstyle{plain}
\newtheorem{theorem}{Theorem}[section]
\newtheorem*{theorem*}{Theorem}

\newtheorem{lemma}[theorem]{Lemma}
\newtheorem{proposition}[theorem]{Proposition}
\newtheorem{cor}[theorem]{Corollary}
\theoremstyle{remark}
\newtheorem{remark}[theorem]{Remark}

\theoremstyle{definition}
\newtheorem*{ack}{Acknowledgements}

\linespread{1.2}

\title{Homotopy types of gauge groups related to $S^3$-bundles over $S^4$}
\author{Ingrid Membrillo-Solis}
\address{Mathematical Sciences, University of Southampton, University Road, Southampton SO17~1BJ, United Kingdom}

\date{\today}

\begin{document}

\begin{abstract}
Let $M_{l,m}$ be the total space of the $S^3$-bundle over $S^4$ classified by the element $l\sigma+m\rho\in{\pi_4(SO(4))}$, $l,m\in\mathbb Z$. In this paper we study the homotopy theory of gauge groups of  principal $G$-bundles over manifolds $M_{l,m}$ when $G$ is a simply connected simple compact Lie group such that $\pi_6(G)=0$. That is, $G$ is one of the following groups: $SU(n)$ $(n\geq4)$, $Sp(n)$ $(n\geq2)$, $Spin(n)$ $(n\geq5)$, $F_4$, $E_6$, $E_7$, $E_8$. If the integral homology of $M_{l,m}$ is torsion-free, we describe the homotopy type of the gauge groups over $M_{l,m}$ as products of recognisable spaces. For any manifold $M_{l,m}$ with non-torsion-free homology, we give a $p$-local homotopy decomposition, for a prime $p\geq 5$, of the loop space of the gauge groups.
\end {abstract}

\maketitle
\tableofcontents

\section{Introduction and main results}

Let $P_f\rightarrow X$ be a principal $G$-bundle over $X$ classified by a map $f:X\to BG$. The (unpointed) gauge group of the bundle, denoted $\mathcal G^f(X)$, is the group of its bundle automorphisms over $X$. That is, an element $\phi\in \mathcal G^f(X)$ is a $G$-equivariant automorphism of $P_f$ lying over the identity map on $X$. The subgroup of $\mathcal{G}^f(X)$ that fixes one fiber is called the pointed gauge group and it is denoted $\mathcal G_*^f(X)$. In this work we aim to classify, up to homotopy, the gauge groups of principal $G$-bundles over manifolds that arise as total spaces of $S^3$-bundles over $S^4$ for $G$ a simply connected simple compact Lie group.

The study of the topology of the gauge groups and their classifying spaces, when $G$ is a Lie group and $X$ is a compact low  dimensional manifold, has played a prominent role in the development of elementary particle theories in physics and the classification of 4-manifolds. Considerable attention has been paid in counting the number of homotopy types of gauge groups and their classifying spaces (see for instance \cite{Th1,Th2,Th3,Spf}). Crabb and Sutherland proved that if $X$ is connected and $G$ is a compact connected Lie group, the number of homotopy types of principal $G$-bundles over $X$ is finite \cite{CS}. In \cite{DT} Donaldson and Thomas introduced some ideas to extend the study of gauge theories to analogous situations in higher dimensions, where some special geometric structures over $X$ are required. It has been shown that certain compact 7-dimensional manifolds present the desired geometric properties. Moreover, the homotopy type of some of these manifolds has been described as a connected sum of total spaces of $S^3$-bundles over $S^4$ \cite{CHNP}.

 An $S^3$-bundle over $S^4$ is a 7-manifold $M$ with a projection map $\pi:M_{}\to S^4$, such that for all $x\in S^4$, there is a homeomorphism $\pi^{-1}(x)\cong S^3$. We can write $$S^3\overset{i}{\longrightarrow} M\overset{\pi}{\longrightarrow}S^4,$$
 where $i$ is the inclusion of the fibre. All the manifolds $M$ are compact, closed, orientable and 2-connected. The group $\pi_3(SO(4))\cong\mathbb{Z}\times\mathbb{Z}$ classifies  $S^3$-bundles over $S^4$ \cite{Stn2}, and the generators of this group are homomorphisms $\rho:S^3\rightarrow SO(4)$ and $\sigma:S^3\rightarrow SO(4)$ defined so that if $q,q'\in S^3$ then
\begin{equation*}
\rho(q)q'=q\cdot q'\cdot q^{-1},
\end{equation*}
\begin{equation*}
\sigma(q)q'=q\cdot q',
\end{equation*}
where $x\cdot y$ represents quaternionic multiplication. Let $M=M_{l,m}$ be the manifold that arises as the total space of the $S^3$-bundle over $S^4$ classified by an element $l\rho + m\sigma\in\pi_3(SO(4))$, where $l,m\in\mathbb Z$.
All the bundles $\pi:M_{l,m}\to S^4$  with $m=0$ admit cross sections. 
Sometimes the manifolds $M_{l,0}$ are referred to as twisted products and are denoted by $S^4\tilde\times_l S^3$. The   manifolds $M_{l,m}$ with $|m|=1$ are homotopy equivalent to $S^7$. If $|m|\geq2$, then $M_{l,m}$ has torsion in homology. 

We are interested in the homotopy theory of principal $G$-bundles over manifolds $M_{l,m}$ when the group $G$ is a simply connected simple compact Lie group. Given a pointed space $X$, we denote by $Prin_G(X)$ the set of isomorphism classes of principal $G$-bundles over $X$. 
 In Section \ref{s:classpring} it is showed that if $\pi_6(G)=0$ then $Prin_G(M_{l,m})=\mathbb{Z}_m$. Here $\mathbb Z_0=\mathbb Z$ and $\mathbb Z_1=0$. The simply connected simple compact Lie groups satisfying the condition $\pi_6(G)=0$ are the following: $SU(n)$ $(n\geq4)$, $Sp(n)$ $(n\geq2)$, $Spin(n)$ $(n\geq 5)$, $F_4$, $E_6$, $E_7$ and $E_8$. 

 Our first main result is to prove that the homotopy type of gauge groups over manifolds $M_{l,m}$ with torsion free homology depends on the homotopy type of gauge groups over $S^4$. It is well known that $Prin_G(S^4)=\mathbb{Z}$. Let $\mathcal G^k(S^4)$  be the unpointed gauge group over $S^4$ classified by~$k\in\mathbb{Z}$.  Given a map represented by $\xi_l\in \pi_6(S^3)\cong\mathbb{Z}_{12}$,  let $Y_{l}$ be its homotopy cofibre.  In Section 4 we prove the following theorem.
\begin{theorem} \label{t:UGGTF}
Let $G$ be a simply connected simple compact Lie group such that $\pi_6(G)=0$ and let $M_{l,0}$ be the total space of an $S^3$-bundle over $S^4$ with a cross section. Let $P_k\to M_{l,0}$ be a principal $G$-bundle classified by $k\in\mathbb{Z}$. There is a homotopy equivalence
 $$\mathcal{G}^k(M_{l,0})\simeq \mathcal{G}^k({S}^4)\times {\rm{Map}}_*(Y_l, G).$$
Moreover, if $l\equiv 0\pmod{12}$ there is a homotopy equivalence
$$\mathcal{G}^k(M_{l,0})\simeq \mathcal{G}^k({S}^4)\times\Omega^3G\times\Omega^7G.$$
\end{theorem}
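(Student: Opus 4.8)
The plan is to use the section $s\colon S^4\to M_{l,0}$ of $\pi$ to split off a copy of $\mathcal G^k(S^4)$ from $\mathcal G^k(M_{l,0})$ and to recognise the complementary factor as a mapping space out of $Y_l$; the hypothesis $\pi_6(G)=0$ is what makes this factor depend only on $Y_l$. First I would fix the relevant cell structure. Since $m=0$ the bundle $\pi$ admits a section, and $H_*(M_{l,0})$ is free on one generator in each of the degrees $0,3,4,7$; hence the $4$–skeleton of $M_{l,0}$ is $S^3\vee S^4$, with $s(S^4)$ the second wedge summand, and $M_{l,0}=(S^3\vee S^4)\cup_{\phi_l}e^7$. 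Collapsing $s(S^4)$ produces a homotopy cofibration $S^4\xrightarrow{\,s\,}M_{l,0}\xrightarrow{\,c\,}Y_l$ with $Y_l=S^3\cup_{\xi_l}e^7$, where $\xi_l$ is the $\pi_6(S^3)$–component of $\phi_l$; the vanishing of the $\pi_6(S^4)$–component of $\phi_l$ and the precise identification of $\xi_l$ with the element named in the statement are outputs of the earlier sections. I would also recall that, since $\pi_6(G)=0$, every principal bundle over $M_{l,0}$ is $P_k=\pi^*P_k'$ with $P_k'\to S^4$ classified by $k$ (Section \ref{s:classpring}).

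Second, restriction along $s$ and pullback along $\pi$ are continuous homomorphisms $s^*\colon\mathcal G^k(M_{l,0})\to\mathcal G^k(S^4)$ and $\pi^*\colon\mathcal G^k(S^4)\to\mathcal G^k(M_{l,0})$ with $s^*\pi^*=(\pi s)^*=\mathrm{id}$. Thus $s^*$ is a split surjection of topological groups, and the section yields a homeomorphism $\mathcal G^k(M_{l,0})\cong\mathcal K\times\mathcal G^k(S^4)$, where $\mathcal K=\ker(s^*)$ is the relative gauge group of automorphisms of $P_k$ restricting to the identity over $s(S^4)$ (the same $\mathcal K$ serves for the pointed gauge groups, since the distinguished fibre lies over $s(S^4)$). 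It then remains to identify $\mathcal K$ with $\mathrm{Map}_*(Y_l,G)$.

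Third, I would deloop the bundle $\mathcal K\to\mathcal G^k(M_{l,0})\xrightarrow{s^*}\mathcal G^k(S^4)$, so that, via $B\mathcal G^k(X)\simeq\mathrm{Map}_{k}(X,BG)$, the space $B\mathcal K$ becomes the homotopy fibre of $s^*\colon\mathrm{Map}_{f_k}(M_{l,0},BG)\to\mathrm{Map}_{g_k}(S^4,BG)$, with $f_k=g_k\pi$. Since $s$ is a cofibration this fibre is the space $\{h\colon M_{l,0}\to BG:\ h|_{s(S^4)}=g_k\}$ of extensions of $g_k$. Restricting such an extension to the $4$–skeleton $S^3\vee S^4$ presents this space as a fibration over $\mathrm{Map}^*(S^3,BG)$ whose fibre is the space of further extensions over the top cell; because $\pi_3(BG)=0$ the base $\mathrm{Map}^*(S^3,BG)$ is connected and the Whitehead–product part of $\phi_l$ contributes nothing, because the $\pi_6(S^4)$–component of $\phi_l$ vanishes the class of $g_k$ enters the top–cell obstruction trivially, and because $\pi_6(G)=0$ the top–cell extensions always exist. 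Hence this fibration, and with it the extension space, is independent of $k$ and agrees with the case $k=0$, where the extension space is literally $\mathrm{Map}^*(M_{l,0}/s(S^4),BG)=\mathrm{Map}^*(Y_l,BG)$. Looping once gives $\mathcal K\simeq\Omega\,\mathrm{Map}^*(Y_l,BG)=\mathrm{Map}_*(Y_l,G)$, hence $\mathcal G^k(M_{l,0})\simeq\mathcal G^k(S^4)\times\mathrm{Map}_*(Y_l,G)$. Finally, if $12\mid l$ then $\xi_l=0$ in $\pi_6(S^3)\cong\mathbb Z_{12}$, so $Y_l\simeq S^3\vee S^7$ and $\mathrm{Map}_*(Y_l,G)\simeq\mathrm{Map}_*(S^3,G)\times\mathrm{Map}_*(S^7,G)=\Omega^3G\times\Omega^7G$.

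The step I expect to be the real obstacle is the identification in the third paragraph: showing that the homotopy fibre of $s^*$ over the $k$–th component is genuinely $\mathrm{Map}^*(Y_l,BG)$, i.e.\ that the extension space does not depend on $k$. This is exactly where $\pi_6(G)=0$, together with the vanishing of the $\pi_6(S^4)$–component of $\phi_l$, is needed, and it is also where one must keep careful track of the path components in the mapping–space fibrations; the formal splitting in the second paragraph, by contrast, is an honest product because $s^*$ and $\pi^*$ are genuine homomorphisms of topological groups.
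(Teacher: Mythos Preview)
Your overall strategy is sound and genuinely different from the paper's. The paper works with the fibration induced by $\pi^*$ going the \emph{other} way: it first identifies the homotopy cofibre $C_{l,0}$ of $\pi$ with $\Sigma Y_l$ (Lemma~\ref{l:diagram}), then proves via an explicit translation argument $\theta_k\colon\Omega^4_0BG\to\Omega^4_kBG$ that the fibre of $\pi^*_k\colon\mathrm{Map}_*^k(S^4,BG)\to\mathrm{Map}_*^k(M_{l,0},BG)$ is $\mathrm{Map}_*(Y_l,G)$ for every $k$ (Lemma~\ref{hofibre}), and finally assembles a large diagram comparing the evaluation fibrations over $S^4$ and $M_{l,0}$, using the splitting of the \emph{pointed} gauge groups (Theorem~\ref{t:PGG}) to produce a right homotopy inverse. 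Your route --- exploiting that $s^*$ and $\pi^*$ are honest homomorphisms of topological groups to get the product decomposition immediately --- is cleaner for the splitting itself and avoids the big diagram.

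The place where your argument is not yet complete is exactly where you flag it. Your obstruction sketch shows that the \emph{class} $(f,g_k)\circ\phi_l\in\pi_6(BG)$ is independent of $k$ (it reduces to $f\circ\xi_l$), but to conclude that the extension space $E_k$ is independent of $k$ you need the \emph{map}
\[
\Phi_k\colon\mathrm{Map}_*(S^3,BG)\longrightarrow\mathrm{Map}_*(S^6,BG),\qquad f\longmapsto (f,g_k)\circ\phi_l,
\]
to be independent of $k$ up to homotopy, since $E_k$ is its homotopy fibre. Knowing that the Whitehead--product term vanishes on $\pi_0$ is not the same as trivialising it continuously in $f$; this is precisely the content handled in the paper by the translations $\theta_k$ in Lemma~\ref{hofibre}. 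A second small slip: ``because $\pi_6(G)=0$ the top-cell extensions always exist'' misplaces the hypothesis --- the obstruction to existence lies in $\pi_6(BG)=\pi_5(G)$ and vanishes for the reasons you already gave ($\pi_3(BG)=0$ and the absent $\pi_6(S^4)$-component). The hypothesis $\pi_6(G)=\pi_7(BG)=0$ is rather what kills the $S^7$-summand of $D_{l,0}\simeq S^4\vee S^7$ and makes the component comparison go through.
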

Theorem \ref{t:UGGTF} implies that the determination of the homotopy type of $\mathcal{G}^k(M_{l,m})$ is reduced to determining that of $\mathcal{G}^k(S^4)$. These gauge groups have been computed for different $G$. For example, from \cite[Theorem 1.1]{Th4}  we obtain the following corollary.
Let $(n_1,n_2)$ be the greatest common divisor of $n_1$ and $n_2$.

\begin{cor}\label{t:cor}
Suppose $M$ is either $S^3\times S^4$ or any twisted product $S^3\tilde\times_l S^4$. Let $P_k\to M$ and $P_{k'}\to M$ be principal SU(5)-bundles. There is a homotopy equivalence $\mathcal{G}^k(M)\simeq\mathcal{G}^{k'}(M)$, if $(120,k)=(120,k')$, when localised rationally or at any prime $p.$ 
\end{cor}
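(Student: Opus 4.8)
The plan is to deduce the corollary directly from Theorem~\ref{t:UGGTF} together with the existing classification of $SU(5)$-gauge groups over $S^4$. First I would note that both $S^3\times S^4$ and the twisted products $S^3\tilde\times_l S^4$ are manifolds of the form $M_{l,0}$ (with $l=0$ in the untwisted case), and that $G=SU(5)$ satisfies $\pi_6(SU(5))=0$; hence Theorem~\ref{t:UGGTF} applies and supplies, for every $k\in\mathbb Z$, a homotopy equivalence
$$\mathcal G^k(M)\simeq\mathcal G^k(S^4)\times{\rm{Map}}_*(Y_l,SU(5)).$$
The point I would stress is that the second factor is assembled only from $Y_l$ and $SU(5)$, so it is the same space for all choices of $k$.

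Next I would localise. Since localisation, rationally or at a prime $p$, is a homotopy functor commuting with finite products, the displayed equivalence gives $\mathcal G^k(M)_{(p)}\simeq\mathcal G^k(S^4)_{(p)}\times{\rm{Map}}_*(Y_l,SU(5))_{(p)}$, and likewise with $k$ replaced by $k'$. Because the mapping-space factor is literally identical on the two sides, a local homotopy equivalence $\mathcal G^k(M)\simeq\mathcal G^{k'}(M)$ will follow as soon as one has a local homotopy equivalence $\mathcal G^k(S^4)\simeq\mathcal G^{k'}(S^4)$.

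Finally I would invoke \cite[Theorem~1.1]{Th4}, which states precisely that $\mathcal G^k(S^4)$ and $\mathcal G^{k'}(S^4)$ become homotopy equivalent after localisation rationally or at any prime whenever $(120,k)=(120,k')$, the relevant integer being $120=5(5^2-1)$, the order that governs the $SU(5)$-gauge groups over $S^4$. Assembling this with the previous step finishes the proof. I do not expect a genuine obstacle here, since the substantive content is entirely contained in Theorem~\ref{t:UGGTF} and in the cited result; the only detail worth a remark is that the decomposition in Theorem~\ref{t:UGGTF} is an integral homotopy equivalence, so passing to each localisation is harmless.
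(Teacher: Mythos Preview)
Your proposal is correct and follows exactly the route the paper indicates: the paper does not give an explicit proof, but simply remarks that Theorem~\ref{t:UGGTF} reduces the problem to $\mathcal G^k(S^4)$ and then cites \cite[Theorem~1.1]{Th4} for the $SU(5)$ case, which is precisely what you have spelled out. Your added observations (that the ${\rm Map}_*(Y_l,SU(5))$ factor is independent of $k$, and that localisation of the integral decomposition is harmless) are the right details to fill in.
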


The proof of Theorem \ref{t:UGGTF} relies on the splitting of the homotopy cofibre $C_{l,m}$ of the projection map. For the case of manifolds with torsion in homology, it is not clear if analogous splittings exist, however, we are able to obtain a splitting of $\Sigma C_{l,m}$. As such, the results in Theorem \ref{t:UGGT} are stated in terms of the loop spaces of the gauge groups rather than the gauge groups themselves.

The cofibration $S^n\overset{m}{\rightarrow} S^n\rightarrow P^{n+1}(m)$ induces a fibration $${\rm{Map}}_*(P^{n+1}(m),BG)\rightarrow{\rm{Map}}_*(S^n,BG)\overset{m^*}{\rightarrow}{\rm{Map}}_*(S^n,BG),$$ where $m_*$ is the $m$-$th$ power map. Let $\Omega^{n}BG\{m\}$ denote the space ${\rm{Map}}_*(P^{n+1}(m),BG)$. Let $v_p(m)$ be the $p$-adic valuation of $m$ at $p.$ If $M_{l,m}$ has torsion in homology we have the following result. 

\begin{theorem}\label{t:UGGT}
Let $G$ be a simply connected simple compact Lie group such that $\pi_6(G)=0$. Let $m> 1$ be an integer and $p\geq 5$ be a prime. Let $P_k\to M_{l,m}$ be a principal $G$-bundle classified by $k\in\mathbb{Z}_{m}$.   
There are p-local homotopy equivalences
\begin{enumerate}[(1)]
\item $\mathcal{G}^0(M_{l,m})\simeq_{(p)} \Omega^7G\times G,$ if $v_p(m)=0;$
\item 
$\Omega\mathcal{G}^{k}(M_{{l,{m}}{}})\simeq_{(p)} \Omega^8_0G_{}\times X_{k},$
if $v_p(m)\geq 1$, where there exists a homotopy fibration $$\Omega^4_0G\{m\}\to X_k\to \Omega G.$$ \end{enumerate}
Moreover, if $v_p(m)=r\geq1$ and $p^r|k$ then $X_k\simeq_{(p)} \Omega G\times \Omega^4_0 G\{m\}$.

\end{theorem}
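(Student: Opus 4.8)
The plan is to analyze the evaluation fibration for the gauge group over $M_{l,m}$ and exploit the $p$-local splitting of $\Sigma C_{l,m}$ (for $p \geq 5$) alluded to in the introduction. Recall that for a principal $G$-bundle $P_k \to X$ classified by $f_k : X \to BG$, there is a homotopy equivalence $B\mathcal{G}^k(X) \simeq \mathrm{Map}_{f_k}(X, BG)$ and a fibration $\mathrm{Map}^*_{f_k}(X, BG) \to \mathrm{Map}_{f_k}(X, BG) \xrightarrow{\mathrm{ev}} BG$, so that $\mathcal{G}^k(X)$ fits into a fibration $G \to \mathcal{G}^k(X) \to \mathrm{Map}^*_{f_k}(X, BG)$, and looping once, $\Omega \mathcal{G}^k(X)$ fits into a fibration $\Omega G \to \Omega \mathcal{G}^k(X) \to \Omega \mathrm{Map}^*_{f_k}(X, BG)$. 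For $X = M_{l,m}$ one uses the cofibration $\Sigma^{-1} C_{l,m} \to M_{l,m} \to S^4$ (equivalently the Puppe sequence of the projection $\pi$) to break $\mathrm{Map}^*(M_{l,m}, BG)$ into pieces built from $\mathrm{Map}^*(S^4, BG) = \Omega^3 G$ and $\mathrm{Map}^*(\Sigma^{-1}C_{l,m}, BG)$.

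**Main steps.**

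First I would identify $\Sigma C_{l,m}$ $p$-locally for $p \geq 5$: since $M_{l,m}$ is $2$-connected with cells in dimensions $4$ and $7$ (plus torsion), the homotopy cofibre $C_{l,m}$ of $\pi$ has cells in dimensions $7$ (and a $4$-cell from $S^4$ contributes via the attaching data, giving torsion when $|m| \geq 2$). After suspension and $p$-localization with $p \geq 5$, the relevant attaching maps—which live in $\pi_*(S^n)$ groups that are $2$- and $3$-torsion in the stable range (e.g. $\pi_6(S^3) = \mathbb{Z}_{12}$, $\pi_7(S^4)$, etc.)—become trivial, so $\Sigma C_{l,m} \simeq_{(p)} S^5 \vee P^9(m)$ or a similar wedge of a sphere and a Moore space $P^{n+1}(m)$. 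This is the step that makes everything tractable $p$-locally and is where the hypothesis $p \geq 5$ is essential. Second, I would feed this splitting into the mapping-space decomposition: $\Omega \mathrm{Map}^*(M_{l,m}, BG)$ splits (after using that $BG$ is an infinite loop space through the relevant range, or at least that the relevant Whitehead products vanish) into $\Omega^4 G$-type factors coming from the $4$-cell and factors $\mathrm{Map}^*(P^{n+1}(m), BG) = \Omega^n BG\{m\}$ coming from the Moore space summand. Third, I would assemble: combining $\Omega G$ from the evaluation fibration with $\Omega^8_0 G$ (the $S^4 \leftrightarrow$ degree contribution, $8 = 4+4$) and $\Omega^4_0 G\{m\}$, and carefully tracking the classifying map $f_k$ to see that the fibration $\Omega^4_0 G\{m\} \to X_k \to \Omega G$ is the correct description of the "remaining" factor $X_k$, with the $\Omega^8_0 G$ factor splitting off. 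For part (1), when $v_p(m) = 0$ the Moore space $P^{n+1}(m)$ is $p$-locally contractible, so $\mathrm{Map}^*(M_{l,m}, BG) \simeq_{(p)} \mathrm{Map}^*(S^4, BG)$ and one recovers the gauge group over $S^4$ with the standard computation $\mathcal{G}^0(S^4) \simeq_{(p)} \Omega^7 G \times G$ (using $\pi_6(G) = 0$ so the Samelson product obstruction vanishes and the $k=0$ bundle is trivial).

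**The splitting of $X_k$ and the main obstacle.**

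For the final "moreover" statement, when $v_p(m) = r \geq 1$ and $p^r \mid k$: the fibration $\Omega^4_0 G\{m\} \to X_k \to \Omega G$ has a classifying map $\Omega G \to B\Omega^4_0 G\{m\}$ (equivalently a map $G \to \Omega^4_0 G\{m\}$, or $\Omega^3 G \to \Omega^4_0 G\{m\}$ after a loop) whose homotopy class is controlled by the component $k$ of the bundle via a Samelson-type product; the divisibility $p^r \mid k$ forces this class to vanish $p$-locally because the order of the relevant element is governed by $p^r$ (as it factors through the mod-$m$ Moore space, whose $p$-torsion is exactly $p^r$), so the fibration is $p$-locally trivial and $X_k \simeq_{(p)} \Omega G \times \Omega^4_0 G\{m\}$. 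I expect the main obstacle to be step two/three: precisely identifying the classifying map of the $X_k$-fibration as an explicit multiple of a fixed Samelson product and then showing its $p$-local order divides $p^r$—this requires tracking the bundle-classifying map $f_k$ through the Puppe/mapping-space machinery and invoking the structure of $[\Sigma^{-1} C_{l,m}, G]$, i.e. understanding how $k$ interacts with the degree-$m$ attaching map. The rest (the wedge decompositions, the identification of $\mathrm{Map}^*$ of spheres with iterated loop spaces of $G$, the vanishing of Whitehead products $p$-locally for $p \geq 5$ in this range) is essentially formal given $\pi_6(G) = 0$.
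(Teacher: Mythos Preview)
Your overall strategy---evaluation fibration, the cofibre $C_{l,m}$ of the projection $\pi:M_{l,m}\to S^4$, and $p$-local suspension splittings for $p\geq5$---is exactly the paper's approach. However, several concrete identifications are off and, as written, would derail the argument.

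First, the evaluation fibration is reversed: from ${\rm Map}_*^k(M_{l,m},BG)\to{\rm Map}^k(M_{l,m},BG)\xrightarrow{ev}BG$ one obtains $\mathcal G^k(M_{l,m})\to G\xrightarrow{\partial^k}{\rm Map}_*^k(M_{l,m},BG)$, so $\mathcal G^k$ is the \emph{fibre} of the connecting map, not the total space over ${\rm Map}_*^k$. The paper's key diagram compares the looped connecting maps for $S^4$ and $M_{l,m}$ and takes vertical fibres; your version would not produce that diagram.

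Second, you have the two suspension splittings swapped. The Moore space lives in $\Sigma M_{l,m}\simeq_{(p)}P^5(p^r)\vee S^8$, whereas $\Sigma C_{l,m}\simeq_{(p)}S^5\vee S^9$ consists only of spheres (because $\pi_8(S^5)$ vanishes at $p\geq5$). This matters: the factor $\Omega^8G$ that splits off $\Omega\mathcal G^k(M_{l,m})$ comes from the $S^9$ summand of $\Sigma C_{l,m}$, while the $\Omega^4G\{m\}$ in the $X_k$ fibration comes from the $P^5(p^r)$ summand of $\Sigma M_{l,m}$ (equivalently, from the fibre of the $m$-th power map on $\Omega^4G$ induced by the degree-$m$ piece of $\delta$). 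Your guess $\Sigma C_{l,m}\simeq_{(p)}S^5\vee P^9(m)$ would put the torsion in the wrong place and break the splitting argument.

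Third, your case (1) is incorrect. When $v_p(m)=0$ the $4$-skeleton $P^4(m)$ is $p$-locally contractible, so $M_{l,m}\simeq_{(p)}S^7$, \emph{not} something with ${\rm Map}_*(M_{l,m},BG)\simeq_{(p)}{\rm Map}_*(S^4,BG)$. The result $\mathcal G^0(M_{l,m})\simeq_{(p)}\Omega^7G\times G$ then follows because the unique bundle is trivial and the evaluation fibration over $S^7$ splits; it has nothing to do with $\mathcal G^0(S^4)$.

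Your sketch of the ``moreover'' clause is essentially right and matches the paper: $\Omega\phi^k$ depends on $k$ only modulo $p^r$, so when $p^r\mid k$ it agrees with $\Omega\phi^0$, which lifts through the $m$-th power map, giving a section $\Omega G\to X_k$.
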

We conclude this paper with a classification of gauge groups of principal $G$-bundles over manifolds $M_{l,1}$, which  are homotopy equivalent to $S^7$. In Section \ref{s:homos7} we prove the following result.

 \begin{theorem}\label{t:S}
Let $G$ be a simply connected simple compact Lie group and let $P_k\to S^7$ and $P_{k'}\to S^7$ be principal $G$-bundles. Then
\begin{enumerate}[(1)]
\item  for $G=SU(2)\cong Sp(1)$ there is a homotopy equivalence $\mathcal G^k(S^7)\simeq\mathcal{G}^{k'}(S^7)$ if and only if $(3,k)=(3,k')$;
\item  for $G = G_2$, there is a homotopy equivalence $\mathcal G^k(S^7)\simeq\mathcal{G}^{k'}(S^7)$ when localised rationally or at any prime if and only if $(3,k)=(3,k')$;
\item  for $G=SU(3)$, there is a homotopy equivalence $\mathcal G^k(S^7)\simeq\mathcal{G}^{k'}(S^7)$ when localised rationally or at a prime $p\geq3$ if and only if $(3,k)=(3,k')$;
\item otherwise, the gauge group of the unique principal $G$-bundle decomposes as $${\mathcal{G}^0(S^7)\simeq \Omega^7G\times G}.$$
\end{enumerate}
\end{theorem}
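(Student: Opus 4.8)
The plan is to exploit the standard fibration relating gauge groups to mapping spaces, together with knowledge of low-dimensional homotopy groups of $G$. Recall that for a principal $G$-bundle $P_k\to S^7$ classified by $k\in\pi_7(BG)\cong\pi_6(G)$, there is a homotopy equivalence $B\mathcal{G}^k(S^7)\simeq{\rm Map}_k(S^7,BG)$, and evaluating at the basepoint gives a fibration $\mathcal{G}_*^k(S^7)\to\mathcal{G}^k(S^7)\to G$, while the pointed gauge group fits into $\mathcal{G}_*^k(S^7)\simeq\Omega^7_kBG\simeq\Omega^6_kG$. When $\pi_6(G)=0$ every bundle is trivial, $\Omega^6_0G\simeq\Omega^6G$, and the evaluation fibration $\Omega^7G\to\mathcal{G}^0(S^7)\to G$ admits a section coming from the trivial bundle's constant gauge transformations (equivalently, the fibration $\mathcal{G}^0(S^7)\to G$ is split by the inclusion $G\hookrightarrow\mathcal{G}^0(S^7)$ of constant maps composed appropriately), giving $\mathcal{G}^0(S^7)\simeq\Omega^7G\times G$; this handles case (4) after checking that none of $SU(2)$, $G_2$, $SU(3)$ fall under $\pi_6(G)=0$ — indeed $\pi_6(SU(2))=\mathbb{Z}_{12}$, $\pi_6(SU(3))=\mathbb{Z}_6$, $\pi_6(G_2)=\mathbb{Z}_3$, and all other simply connected simple compact $G$ have $\pi_6(G)=0$ by the discussion preceding Theorem~\ref{t:UGGTF}.

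For cases (1)--(3) the bundle is classified by an element of a nontrivial cyclic group, so I would follow the now-standard strategy initiated by Kono and developed by Theriault, Kishimoto--Kono and others: study the connecting map $\partial_k:G\to\mathcal{G}_*^k(S^7)\simeq\Omega^6_0G$ of the evaluation fibration, which is known to satisfy $\partial_k\simeq k\cdot\partial_1$ up to homotopy (it is linear in $k$ in the appropriate sense, coming from the fact that the adjoint of $\partial_k$ is a Samelson product with the classifying map $k\epsilon$ where $\epsilon$ generates $\pi_6(G)$). For $G=SU(2)=Sp(1)=S^3$, $\pi_6(S^3)\cong\mathbb{Z}_{12}$ and the relevant Samelson product computation reduces the order of $\partial_k$ to a question about $\gcd(12,k)$ which, after accounting for the structure of $\pi_*(S^3)$ in the relevant range, collapses to $\gcd(3,k)$; the forward direction (that $(3,k)=(3,k')$ implies homotopy equivalence) follows by comparing the fibrations, and the converse (that the homotopy type determines $(3,k)$) is obtained by evaluating a suitable homotopy or homology invariant — typically counting the order of an element in $\pi_*$ of the gauge group or using the $p$-primary information at $p=2,3$. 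For $G_2$ and $SU(3)$ one argues $p$-locally and rationally: rationally all gauge groups over $S^7$ are equivalent (the connecting map is rationally trivial since $\pi_6(G)$ is finite), at primes $p\neq3$ the group $\pi_6(G)$ has no $p$-torsion so again $\mathcal{G}^k(S^7)\simeq_{(p)}\Omega^7G\times G$ independent of $k$, and at $p=3$ one carries out the Samelson product analysis to see the homotopy type depends exactly on $(3,k)$.

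The main obstacle I anticipate is the converse direction in (1)--(3): showing that $(3,k)\neq(3,k')$ forces $\mathcal{G}^k(S^7)\not\simeq\mathcal{G}^{k'}(S^7)$. This requires producing a homotopy invariant that distinguishes the two — the usual approach is to show that the order of the connecting map $\partial_k$ (which lives in $[\,G,\Omega^6_0G\,]$) is a homotopy invariant of the gauge group, for instance because $\mathcal{G}^k(S^7)$ determines $B\mathcal{G}^k(S^7)={\rm Map}_k(S^7,BG)$ and hence, via the evaluation fibration and the fact that $\Omega G$ is a loop space, one can recover the relevant attaching data up to the ambiguity already present. For $SU(2)$ this is precisely the kind of argument appearing in the literature on $\mathcal{G}^k(S^4)$ adapted one dimension up, and I would cite or adapt those techniques; the subtlety is that $S^7$ is odd-dimensional so the relevant self-maps of $S^3$ that enter are the degree maps on $\pi_6(S^3)$, and one must check the $2$-primary part contributes nothing new beyond what $(3,k)$ sees — this is where the explicit structure $\pi_6(S^3)\cong\mathbb{Z}_{12}\cong\mathbb{Z}_4\oplus\mathbb{Z}_3$ and the behaviour of the Samelson product $\langle 1,\nu'\rangle$ must be pinned down carefully, and I expect that a mod-$2$ analysis shows the $2$-part of $\partial_k$ is already null or independent of $k$, leaving only the mod-$3$ obstruction.
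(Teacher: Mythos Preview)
Your skeleton is right --- evaluation fibration, Lang's identification of $\partial_k$ with the Samelson product $\langle k\epsilon,1_G\rangle$, splitting in case~(4) --- but several key steps are either missing or misdirected.

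\textbf{The target group is already $\mathbb{Z}_3$.} For $G=SU(2)=S^3$, the adjoint $\partial_k$ lies in $[S^6\wedge S^3,S^3]=\pi_9(S^3)\cong\mathbb{Z}_3$. There is no $2$-primary part to analyse; the ``mod-$2$ analysis'' you anticipate is vacuous, and the passage from $(12,k)$ to $(3,k)$ is immediate from $|\pi_9(S^3)|=3$, not from a separate argument about the $2$-part of $\partial_k$.

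\textbf{Order of $\partial_1$.} You gloss over the one genuine computation: why is $\partial_1$ nonzero? The paper uses that $\epsilon=\langle\iota_3,\iota_3\rangle$ generates $\pi_6(S^3)$, so the restriction of $\partial_1$ to the bottom cell is the triple commutator $\langle\langle\iota_3,\iota_3\rangle,\iota_3\rangle\in\pi_9(S^3)$; this is essential because $S^3_{(3)}$ has nilpotency class~$3$ (Kaji--Kishimoto). For $G_2$ and $SU(3)$ one then reduces to the $S^3$ case via the inclusion $S^3\hookrightarrow G$, which at $p=3$ induces $\pi_9(S^3)\cong\pi_9(G)$ (using the fibration $S^3\to G_2\to S^{11}$, respectively the $3$-local splitting $SU(3)\simeq S^3\times S^5$).

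\textbf{Converse direction.} Your plan --- to show that the order of $\partial_k$ in $[G,\Omega^6_0G]$ is a homotopy invariant of $\mathcal{G}^k(S^7)$ --- is much harder than what is needed and is not what the paper does. The paper simply reads off $\pi_2(\mathcal{G}^k(S^7))\cong\pi_3(B\mathcal{G}^k(S^7))$ from the exact sequence of the evaluation fibration: it is the cokernel of $(\partial_k)_*:\pi_3(G)\to\pi_9(G)\cong\mathbb{Z}_3$, hence $\mathbb{Z}_3$ if $3\mid k$ and $0$ otherwise. This single homotopy group already distinguishes the cases.

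\textbf{Forward direction.} ``Comparing the fibrations'' is not enough. For $G_2$ and $SU(3)$ the paper invokes Theriault's lemma (if $f:X\to Y$ has order $m$ and $(m,k)=(m,k')$ then the fibres of $kf$ and $k'f$ are $p$-locally and rationally equivalent). For $SU(2)$ the statement is \emph{integral}, and the paper obtains it directly: since $\partial_1$ generates $\pi_9(S^3)\cong\mathbb{Z}_3$, whenever $(3,k)=(3,k')$ one has $k'\partial_1=\pm k\partial_1$, and composing with the self-equivalence $x\mapsto x^{-1}$ of $\Omega^7BS^3$ gives a map of fibrations inducing $\mathcal{G}^k(S^7)\simeq\mathcal{G}^{k'}(S^7)$.

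\textbf{Minor.} Your claim that ``at primes $p\neq3$ the group $\pi_6(G)$ has no $p$-torsion'' is false for $SU(3)$, where $\pi_6(SU(3))=\mathbb{Z}_6$; this is exactly why part~(3) is stated only for $p\geq3$.
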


\begin{remark}
We want to point out that part \textit{(1)} of Theorem \ref{t:S}  contrasts with the results given in  \cite[Proposition 2]{Spf}, where it is stated that integrally, if $G=S^3$ all gauge groups over $S^7$ are homotopy equivalent. Our results show that given two elements $k,k'\in[S^7,BSU(2)],$ it is not always true that $\mathcal{G}^k(S^7)\simeq \mathcal{G}^{k'}(S^7).$
\end{remark}

\begin{ack}
I would like to thank Shizuo Kaji for many valuable conversations and the anonymous referee for suggesting a reformulation of Theorem 1.4 and for making comments which have helped to improve the clarity of this paper. I would like to give special thanks to Stephen Theriault for his advice and encouragement during the development of this research. This research project was supported by the Mexican National Council for Science and Technology (CONACyT) through the scholarship 313812.
\end{ack}

 \section{Classification of principal $G$-bundles over $M_{l,m}$}
 \label{s:classpring}

All spaces  considered in this work have the homotopy type of $CW$-complexes with non-degenerate basepoints. For given spaces $X$ and $Y,$ let Map$(X,Y)$ and Map$_*(X,Y)$ be the spaces of unpointed and pointed maps from $X$ to $Y,$ respectively. We endow these spaces with the compact-open topology. The path components of the corresponding mapping spaces containing the map $f$ are denoted Map$^f(X,Y)$ and Map$^f_*(X,Y)$. 
 We denote by $\langle X,Y\rangle$ and $[X,Y]$ the sets of homotopy classes of unpointed and pointed maps from $X$ to $Y$, respectively. Given a map $f:X\rightarrow Y$, we denote its homotopy class by the same letter $f$. The finite cyclic group of $n$ elements is denoted $\mathbb{Z}_n$. The localisation of $\mathbb Z$ at a prime $p$ is denoted $\mathbb Z_{(p)}.$ 

Let $M_{l,m}$ be the total space of the $S^3$-bundle over $S^4$,
classified by the element $$l\rho+m\sigma\in\pi_3(SO(4))\cong\mathbb{Z}\times\mathbb{Z},$$ where $l,m\in\mathbb{Z}$.  
 First observe that since all manifolds $M_{l,m}$ are 2-connected, we can give $M_{l,m}$ a minimal cellular structure
 \begin{equation*}
S^3\cup_{\varphi'_{}} e^4\cup_{\varphi_{}}e^7,
\end{equation*}
where $\varphi'_{}$ and $\varphi_{}$ are the attaching maps of the 4-cell and the 7-cell, respectively.
There are homeomorphisms \cite{Stn2} $M_{l,m}\cong M_{-l,-m}$ and $M_{l,m}\cong M_{l+m,-m}$ so that from now on we will only consider the case $m\geq 0$. 
The 4-skeleton $M^4_{l,m}$ of $M_{l,m}$ is given by the pushout 
\begin{equation}
\xymatrix{
S^3\ar[r]\ar[d]_-{\varphi'}&D^4\ar[d]\\
S^3\ar[r]&S^3\cup_{\varphi'} D^4 \cong M_{l,m}^4
}
\end{equation}
where $\varphi'$ is degree $m$ map and $m$ is the integer associated to the classifying element $l\rho+m\sigma\in\pi_4(SO(4))$ \cite{JW1}. If $m= 0$, then $M^4_{l,m}\simeq S^3\vee S^4,$ and in this case we have 
\begin{equation*}
M_{l,0}\simeq (S^3\vee S^4)\cup_{\varphi}e^7,
\end{equation*}
for $\varphi\in\pi_6(S^3\vee S^4)$. All the sphere bundles $M_{l,0}\xrightarrow{\pi}S^4$ admit a cross section so that the exact sequences of the fibre bundles show that the homotopy and homology groups of the manifolds $M_{l,0}$ are isomorphic to those of the total space of the trivial bundle $S^3\times S^4$. 
James and Whitehead  showed that $M_{l,0}\simeq M_{l',0}$ 
if and only if $l\equiv \pm l' \pmod{12}$ \cite{JW1} .

If $m> 0$, then the 4-skeleton $M^4_{l,m}$ is the Moore space $P^4(m)$ which is the homotopy cofibre of the degree $m$ map and satisfies $\tilde H_i(P^4(m))\cong\mathbb{Z}_m$, if $i=3$, and $\tilde H_i(P^4(m))\cong0$ otherwise. Thus given $M_{l,m}$ and $M_{l,m'}$, if $m\neq m'$ then $\pi_3(M_{l,m})\ncong \pi_3(M_{l,m'})$,  and therefore these spaces are not homotopy equivalent.
A minimal cellular structure for $M_{l,m}$ is given by
\begin{equation*}
M_{l,m}\simeq P^4(m)\cup_{\varphi}e^7,
\end{equation*}
for some $\varphi\in\pi_6(P^4(m))$. In \cite{CE} Crowley and Escher classified the homotopy types of manifolds~$M_{l,m}$ for $m> 0$. They showed that there is an orientation preserving homotopy equivalence $M_{l,m}\simeq M_{l',m'}$ if and only if $m = m'$ and 
 $l'\equiv\alpha l \pmod{(m, 12)}$ where $\alpha^2\equiv 1 \pmod{(m,12)}$. Thus in the case $m=1$ we have $P^4(1)\simeq *$ and we have $$M_{l,1}\simeq S^7$$
 for all $l\in\mathbb{Z}$. 

We make use of the Serre spectral sequence to obtain information on the properties of the projection maps $\pi:M_{l,m}\to S^4$ .

 \begin{lemma}\label{spect}
The map $\pi^*:H^4(S^4)\to H^4(M_{l,m})$ is an isomorphism if $m=0,$ reduction modulo m if $m>0$ and, in particular, the constant map if $m=1.$
\end{lemma}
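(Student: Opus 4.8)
The plan is to run the cohomology Serre spectral sequence of the sphere bundle $S^3\xrightarrow{i}M_{l,m}\xrightarrow{\pi}S^4$ and to extract $\pi^*$ from the edge homomorphism. Since $S^4$ is simply connected the local coefficients are untwisted, so $E_2^{p,q}=H^p(S^4;H^q(S^3))$ is infinite cyclic at the four corners $(p,q)\in\{(0,0),(4,0),(0,3),(4,3)\}$ and zero otherwise. For degree reasons $d_2=d_3=0$, and the only possibly non-zero differential touching the line $q=0$ is the transgression $d_4\colon E_4^{0,3}\to E_4^{4,0}$, a homomorphism $\mathbb Z\to\mathbb Z$, hence multiplication by some integer $e$. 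In total degree $4$ the only surviving group is $E_\infty^{4,0}$, so $H^4(M_{l,m})\cong E_\infty^{4,0}=\operatorname{coker}(d_4)=\mathbb Z/e\mathbb Z$, and under this identification the edge map $\pi^*\colon H^4(S^4)=E_2^{4,0}\twoheadrightarrow E_\infty^{4,0}$ is reduction modulo $e$ (with the convention $\mathbb Z/0\mathbb Z=\mathbb Z$). One may equivalently phrase all of this through the Gysin sequence of the bundle, in which $e$ is the Euler class of the associated rank-$4$ bundle over $S^4$.

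It therefore remains to identify $|e|$ with $m$. One route is to invoke the classical computation of the Euler number of an $S^3$-bundle over $S^4$: under the isomorphism $\pi_3(SO(4))\cong\mathbb Z\{\rho\}\oplus\mathbb Z\{\sigma\}$ the bundle classified by $l\rho+m\sigma$ has Euler number $\pm m$ (this invariant is linear in $(l,m)$, vanishes on $\rho$ — consistent with $M_{l,0}\to S^4$ admitting a section — and equals $\pm1$ on $\sigma$, the quaternionic Hopf bundle; see \cite{Stn2,JW1}). Equivalently — and this is already recorded in the present section — the $4$-cell of $M_{l,m}$ is attached to $S^3$ by a map of degree $m$, so $M^4_{l,m}\simeq P^4(m)$ for $m>0$ and $M^4_{l,m}\simeq S^3\vee S^4$ for $m=0$; since $M_{l,m}=M^4_{l,m}\cup_\varphi e^7$ has $M_{l,m}/M^4_{l,m}\simeq S^7$, the inclusion $M^4_{l,m}\hookrightarrow M_{l,m}$ induces an isomorphism on $H^4$, whence $H^4(M_{l,m})$ is $\mathbb Z/m$ for $m>0$ and $\mathbb Z$ for $m=0$. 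Matching this against $H^4(M_{l,m})\cong\mathbb Z/e\mathbb Z$ from the spectral sequence forces $|e|=m$.

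Combining the two steps yields the statement: for $m=0$, $\pi^*$ is a surjective endomorphism of $\mathbb Z$, hence an isomorphism; for $m>0$, $\pi^*$ is the quotient $\mathbb Z\twoheadrightarrow\mathbb Z/m$, i.e.\ reduction modulo $m$; and for $m=1$ the target is $0$, so $\pi^*$ vanishes — the ``constant map'' of the statement — in agreement with $M_{l,1}\simeq S^7$ and $H^4(S^7)=0$. I do not expect a genuine obstacle here: the only non-formal ingredient is the equality $|e|=m$, which is either the classical clutching-function calculation or is forced by the cellular description of $M_{l,m}$ recalled above. The one point that needs care is the normalisation — that the relevant invariant is $m$ itself, and not some other integer linear combination of $l$ and $m$.
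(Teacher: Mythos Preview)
Your proof is correct and follows essentially the same route as the paper: run the Serre spectral sequence of the sphere bundle, observe that the only possibly non-trivial differential is $d_4\colon E_4^{0,3}\to E_4^{4,0}$, and read off $\pi^*$ from the edge homomorphism. The paper simply asserts $d_4(y_3)=mx_4$ without further comment (implicitly relying on the cellular description of $M_{l,m}$ given just before the lemma), whereas you spell out the identification $|e|=m$ via the known $H^4(M_{l,m})$ or, alternatively, via the Euler class; this extra care is welcome but does not constitute a different method.
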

\begin{proof}
 Consider the Serre spectral sequence of the sphere bundle
$$\xymatrix{
S^3\ar[r]& M_{l,m}\ar[r]^{\pi} &S^4
}$$
which converges to $H^*(M_{l,m}),$ and let $y_3$ and $x_4$ be suitable generators of $H^3(S^3)\cong \mathbb{Z}$ and ${H^4(S^4)\cong\mathbb{Z}}$ respectively. Then the $E_2^{p,q}$ page in the spectral sequence has the following form.
\begin{center}
\begin{tabular}{c | c c c}
3&$y_3$&&$y_3x_4$\\
&&&\\
0&1&&$x_4$\\
\hline
&0&&4
\end{tabular}
\end{center}
Thus we have that $E_2^{p,q}=E_4^{p,q}=H^p(S^3)\otimes H^q(S^4)$, and for dimensional reasons there is at most one non-trivial differential, namely $d_4(y_3)=mx_4$. This implies the result.  
\end{proof}

In order to obtain a classification of the principal $G$-bundles over $M_{l,m}$, when $m\neq1$, it is necessary to obtain information on the homotopy cofibre of the inclusion of the bottom cell 
\begin{equation}\label{c:s3inc}
\xymatrix{
S^3\ar[r]^-{i}& M_{l,m}\ar[r]^-{q}& D_{l,m}.
}
\end{equation}

\begin{lemma}\label{quotient}
There is a homotopy equivalence $$D_{l,m}\simeq S^4\vee S^7$$ and the homotopy equivalence can be chosen so that the composite $$M_{l,m}\xrightarrow{q} D_{l,m}\xrightarrow{\simeq}S^4\vee S^7\xrightarrow{pinch} S^4$$
is homotopic to the projection $\pi:M_{l,m}\to S^4$.
\end{lemma}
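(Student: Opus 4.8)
\emph{Proof proposal.} The plan is to use the fibre bundle structure of $\pi\colon M_{l,m}\to S^4$ directly, noting that the clutching data $l\rho+m\sigma$ is washed out once a fibre is collapsed. Throughout, regard $i$ as the inclusion of a fibre, $i\colon S^3=\pi^{-1}(x_0)\hookrightarrow M_{l,m}$ for a fixed $x_0\in S^4$; this represents the bottom cell of the minimal cell structure (see the last paragraph). Write $S^4=D_N\cup D_S$ as the union of its two closed hemispheres, with $D_N\cap D_S=S^3_e$ the equatorial $3$-sphere and $x_0$ interior to $D_S$. First I would replace the quotient by a fibre with the quotient by the preimage of a hemisphere: since $\pi$ is a bundle and $D_S$ is contractible, $\pi^{-1}(x_0)\hookrightarrow\pi^{-1}(D_S)$ is a homotopy equivalence, and it is a cofibration because $x_0$ is interior to $D_S$; hence $\pi^{-1}(D_S)/\pi^{-1}(x_0)$ is contractible and $M_{l,m}/\pi^{-1}(x_0)\to M_{l,m}/\pi^{-1}(D_S)$ is a homotopy equivalence. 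As $D_{l,m}\simeq M_{l,m}/i(S^3)=M_{l,m}/\pi^{-1}(x_0)$, this gives $D_{l,m}\simeq M_{l,m}/\pi^{-1}(D_S)$.

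Next I would identify this space. Collapsing $\pi^{-1}(D_S)$ leaves $\pi^{-1}(D_N)$ with its boundary $\pi^{-1}(S^3_e)$ crushed to a point, so $M_{l,m}/\pi^{-1}(D_S)\cong\pi^{-1}(D_N)/\pi^{-1}(S^3_e)$. The bundle is trivial over the disc $D_N$, compatibly over its boundary, so this is $(D^4\times S^3)/(\partial D^4\times S^3)$; for a cofibration $A\subset B$ there is a natural homeomorphism $(B\times Y)/(A\times Y)\cong(B/A)\wedge Y_+$, which here gives $S^4\wedge(S^3)_+$, and using $(S^3)_+\simeq S^0\vee S^3$ this is $S^4\vee S^7$. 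This proves the first assertion, uniformly in $l,m$ (the case $m=1$ included, where $M_{l,1}\simeq S^7$ and $i$ is null-homotopic).

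For the statement about $\pi$, observe that $\pi$ carries $\pi^{-1}(D_S)$ into $D_S$, hence induces a map $M_{l,m}/\pi^{-1}(D_S)\to S^4/D_S$; by naturality of quotients, precomposing this with $q$ recovers $\pi$ followed by the collapse $S^4\to S^4/D_S$, which is a homotopy equivalence onto $S^4/D_S\cong D_N/\partial D_N=S^4$ because $D_S$ is contractible. Under the identifications of the previous paragraph the induced map is $\mathrm{id}_{S^4}\wedge c$, where $c\colon(S^3)_+\to S^0$ collapses $S^3$, and under $S^4\wedge(S^3)_+\simeq S^4\vee S^7$ this is exactly the pinch onto $S^4$. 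Thus, with this choice of equivalence $D_{l,m}\simeq S^4\vee S^7$, the composite $M_{l,m}\xrightarrow{q}D_{l,m}\simeq S^4\vee S^7\xrightarrow{\mathrm{pinch}}S^4$ is homotopic to $\pi$.

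I expect the only delicate point to be the bookkeeping in the last paragraph: one must make the homeomorphism $(D^4\times S^3)/(\partial D^4\times S^3)\cong S^4\wedge(S^3)_+$ and the splitting of $(S^3)_+$ natural enough that the map induced by $\pi$ is \emph{visibly} the pinch, rather than merely some map inducing the correct maps on homology; the latter would not suffice, since $[S^4\vee S^7,S^4]$ contains $\pi_7(S^4)$, whose free summand is invisible to ordinary homology. A minor secondary point, relevant only for $m>1$: since $\pi_3(M_{l,m})=\mathbb Z_m$, two generators need not differ by a homotopy self-equivalence of $S^3$, so to identify $i$ with a fibre inclusion one simply takes the minimal cell structure whose $3$-skeleton is a fibre.
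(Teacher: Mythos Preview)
Your argument is correct and takes a genuinely different route from the paper's.

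The paper proceeds homotopy--theoretically: since $\pi_3(S^4)=0$ the projection $\pi$ extends to a map $\tilde\pi\colon D_{l,m}\to S^4$; one then shows $\tilde\pi$ admits a right homotopy inverse, treating $m=0$ (via the cross section) and $m>1$ (via a cohomology computation together with the Peterson--Stein formula applied to the diagram comparing the Moore--space cofibration with the bundle fibration) separately; finally the coaction $D_{l,m}\to D_{l,m}\vee S^7$ is combined with $\tilde\pi$ to produce the desired equivalence, and the compatibility with $\pi$ is read off from that construction. Your approach is bundle--theoretic and more elementary: you replace the collapsed fibre by the preimage of a hemisphere, use local triviality to identify the resulting quotient with the half--smash $S^4\wedge(S^3)_+$, and split. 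This is uniform in $l,m$ (no case distinction, and it even covers $m=1$, which the paper does not treat here), avoids the Peterson--Stein formula entirely, and makes transparent why the clutching element $l\rho+m\sigma$ plays no role once a fibre is crushed. What the paper's argument buys in return is the explicit extension $\tilde\pi$ and the coaction machinery, which are in the spirit of the later cofibre manipulations in the paper; your argument buys directness and a cleaner identification of the pinch.

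One small imprecision worth noting: the pointed equivalence $(S^3)_+\simeq S^0\vee S^3$ you invoke is not literally correct, since the basepoint of $(S^3)_+$ lies in the singleton component whereas that of $S^0\vee S^3$ lies in the $S^3$ component. What is true, and all you need, is that this splitting holds after one suspension (equivalently, $\Sigma^n(X_+)\simeq S^n\vee\Sigma^nX$ for connected well--pointed $X$), and you are smashing with $S^4$. The conclusion and the identification of the induced map with the pinch are unaffected. Your closing caveat about the CW structure when $m>1$ is apposite and correctly handled: taking a minimal cell structure whose $3$--skeleton is a fibre is exactly what is required to identify $i$ with the fibre inclusion.
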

\begin{proof}
Since $\pi_3(S^4)\cong0$, the projection $M_{l,m}\xrightarrow{\pi}S^4$ has a homotopy extension $\tilde\pi$
\begin{equation}\label{C2}
\xymatrix{
S^3\ar[r]^-{i}&M_{l,m}\ar[r]^-q\ar[d]_-{\pi}&D_{l,m}\ar@{.>}[ld]^-{\tilde \pi}\\
&S^4.
}
\end{equation}
Observe that the cofibre $D_{l,m}$ can be built as a $CW$-complex with one 7-cell attached to a 4-sphere. Thus $D_{l,m}$ fits into the following cofibration sequence
\begin{equation}\label{cwc}
\xymatrix{
S^6\ar[r]^{\theta}& S^4\ar[r]^-{g}&D_{l,m}\ar[r]^-{b}&S^7,
}
\end{equation}
where $g$ is the inclusion, $\theta\in \pi_6(S^4)\cong\mathbb{Z}_2$ and $b$ is the connecting map. Let $M_{l,m}$ be a manifold with $m=0.$ Then the map $\pi:M_{l,0}\to S^4$ has a cross section $S^4\rightarrow M_{l,0}$.
Therefore, by the homotopy commutativity of \eqref{C2}, the map $\tilde\pi$ also has a right homotopy inverse. Now suppose $m>1$. The map $S^4\overset{g}{\rightarrow} D_{l,m}$ is the inclusion of the bottom cell and induces an isomorphism $g^*:H^4(D_{l,m})\xrightarrow{\cong} H^4(S^4)\cong\mathbb{Z}.$ Consider the commutative diagram:
\begin{equation}\label{dcch}
\xymatrix{
H^4(M_{l,m})&H^4(D_{l,m})\ar[l]_-{q^*}.\\
H^4(S^4)\ar[u]^-{\pi^{*}}\ar[ur]_-{\tilde \pi^{*}}&
}
\end{equation}
 By Lemma \ref{spect}, $\pi^{*}$ is reduction mod $m$. From \eqref{dcch} we obtain the following composite
$$\xymatrix{
\pi^*: \mathbb{Z}\ar[r]^-{\tilde\pi^*}&\mathbb{Z}\ar[r]^-{q^{*}}&\mathbb{Z}_{m},
}$$ which is reduction mod $m$. Thus $\tilde\pi^*=\pm1\pmod{m}.$  
Consider the homotopy commutative diagram 
\begin{equation}\label{d:P-S}
\xymatrix{
S^3\ar[r]^-{m}\ar[d]_-{\xi}&S^3\ar[r]\ar@{=}[d]&P^4(m)\ar[r]^-{q'}\ar[d]_-{j}& S^4\ar[d]^-{\pi'}\\
\Omega S^4\ar[r]^\delta&S^3\ar[r]&M_{l,m}\ar[r]^-\pi & S^4
}
\end{equation}
where the top row is a cofibration sequence and the bottom row is a fibration sequence. The connecting map $\delta$ induces multiplication by $m$ in cohomology. From the left square we obtain that $\xi$ is the inclusion of the bottom cell. By the Peterson-Stein formula the adjoint of the map $\xi$ is homotopic to $\pi'.$ Therefore  $\pi'$ is a homotopy equivalence. 
 This implies that $\tilde\pi$ has a right homotopy inverse. Therefore for all $m\neq 1$, the composite $S^4\xrightarrow{g} D_{l,m}\overset{\tilde\pi}{\to}S^4$ is a homotopy equivalence. There is a coaction $\sigma:D_{l,m}\to D_{l,m}\vee S^7$ such that the composite
$D_{l,m}\xrightarrow{\sigma} D_{l,m}\vee S^7\xrightarrow{pinch}S^7$
is homotopic to the connecting map $b$ in \eqref{cwc}, and the composite
$D_{l,m}\xrightarrow{\sigma} D_{l,m}\vee S^7\xrightarrow{pinch}D_{l,m}$ is a homotopy equivalence. Since $\tilde\pi$ has a right homotopy inverse, the composite
\begin{equation}
\theta:D_{l,m}\xrightarrow{\sigma}D_{l,m}\vee S^7\xrightarrow{\tilde\pi\vee\mathbbm 1}S^4\vee S^7
\end{equation}
is a homotopy equivalence. Also the diagram
\begin{equation}
\xymatrix{
D_{l,m}\ar[r]^-\sigma\ar@{=}[rd]&D_{l,m}\vee S^7\ar[r]^-{\tilde\pi\vee\mathbbm1}\ar[d]^-{pinch}&S^4\vee S^7\ar[d]^-{pinch}\\
&D_{l,m}\ar[r]^-{\tilde\pi}&S^4
}
\end{equation}
homotopy commutes.
Therefore by \eqref{C2} the composite $M\xrightarrow{q}D_{l,m}\xrightarrow{\theta}S^4\vee S^7\xrightarrow{pinch}S^4$ is homotopic to $\pi$.

\end{proof}

Given a compact connected topological space $X$  and a topological group $G$, let $Prin_G(X)$ be the set of isomorphism classes of principal $G$-bundles over $X$. It is well-known that there is a one-to-one correspondence between $Prin_G(X)$ and $\langle X,BG\rangle$, where $BG$ is the classifying space of $G$. 
The evaluation fibration
\begin{equation*}
{\rm{Map}}_*(M_{l,m},BG)\rightarrow{\rm{Map}}(M_{l,m},BG)\xrightarrow{ev}BG
\end{equation*}
induces an exact sequence of homotopy sets
$$\pi_1(BG)\xrightarrow{\partial} [M_{l,m},BG]\to\langle M_{l,m},BG\rangle\xrightarrow{ev^*}\pi_0(BG).$$
The induced map $ev^*$ is trivial as $BG$ is connected, and the coset space of $\partial(\pi_1(BG))$ coincides with the orbit space of the action of $\pi_1(BG)\cong\pi_0(G)$ on $[M_{l,m},BG]$. Since all groups $G$ considered in this work are connected, this action is trivial, which implies that there is a bijection between $[M_{l,m},BG]$ and  $\langle M_{l,m},BG\rangle$. We compute the sets $[M_{l,m},BG]$ for those manifolds with $m\neq1$.  We restrict to the case when $\pi_6(G)=0$, that is, when $G$ is one of the following groups: $SU(n)$ $(n\geq4)$, $Sp(n)$ $(n\geq2)$, $F_4$, $E_6$, $E_7$ or $E_8$.

\begin{proposition}\label{bundles}
Let $G$ be a simply connected simple compact Lie group such that $\pi_6(G)\cong 0$. \begin{enumerate}[(1)]
\item If $m = 0$ then $[M_{l,m},BG]=\mathbb{Z}$;
\item if $m \geq 2$ then $[M_{l,m},BG]=\mathbb{Z}_m$.
\end{enumerate} 
Moreover, the projection $M_{l,m}\overset{\pi}{\to} S^4$ induces a map $$\pi^*:[S^4,BG]\to [M_{l,m},BG]$$
which is a bijection if $m=0$ and a surjection if $m\geq2.$
\end{proposition}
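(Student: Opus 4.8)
The plan is to apply the functor $[-,BG]$ to the Puppe cofibre sequence determined by the inclusion of the bottom cell \eqref{c:s3inc}, to read off $[M_{l,m},BG]$ from the resulting exact sequence of pointed sets, and to deduce the behaviour of $\pi^{*}$ along the way. The cofibre sequence $S^3\xrightarrow{i}M_{l,m}\xrightarrow{q}D_{l,m}\xrightarrow{\delta}\Sigma S^3\simeq S^4$ gives
\[
[S^4,BG]\xrightarrow{\ \delta^{*}\ }[D_{l,m},BG]\xrightarrow{\ q^{*}\ }[M_{l,m},BG]\xrightarrow{\ i^{*}\ }[S^3,BG].
\]
Since $[S^3,BG]=\pi_3(BG)=\pi_2(G)=0$ for any Lie group, $q^{*}$ is surjective; and since $D_{l,m}\simeq S^4\vee S^7$ is a suspension by Lemma \ref{quotient}, $[D_{l,m},BG]$ carries a group structure for which the Puppe coaction makes $q^{*}(\beta)=q^{*}(\beta')$ hold precisely when $\beta-\beta'\in\operatorname{im}(\delta^{*})$; hence $q^{*}$ induces a bijection $\operatorname{coker}(\delta^{*})\xrightarrow{\cong}[M_{l,m},BG]$. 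Finally $[D_{l,m},BG]\cong[S^4\vee S^7,BG]\cong\pi_3(G)\oplus\pi_6(G)\cong\mathbb Z$, because $G$ is simple and $\pi_6(G)=0$, and under this identification the inclusion $g\colon S^4\hookrightarrow D_{l,m}$ of the bottom cell appearing in \eqref{cwc} induces the projection onto the first summand, so $g^{*}\colon[D_{l,m},BG]\xrightarrow{\cong}[S^4,BG]$.

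Next I would identify $\delta^{*}$. Under the isomorphism $g^{*}$ it becomes $(\delta g)^{*}\colon[S^4,BG]\to[S^4,BG]$, so it suffices to identify the self-map $\delta g$ of $S^4$. Applying naturality of the Puppe construction to the skeletal inclusion $M^{4}_{l,m}\hookrightarrow M_{l,m}$ (which restricts to the identity on the bottom cell $S^3$ and induces $g$ on cofibres of the collapse of $S^3$), one sees that $\delta g$ is homotopic to the connecting map of the cofibration $S^3\hookrightarrow M^{4}_{l,m}\to M^{4}_{l,m}/S^3\simeq S^4$; since $M^{4}_{l,m}=S^3\cup_{\varphi'}e^4$ with $\varphi'\colon S^3\to S^3$ of degree $m$, this connecting map is $\pm\Sigma\varphi'$, a self-map of $S^4$ of degree $m$ up to sign. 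Therefore $(\delta g)^{*}$ is multiplication by $m$ on $[S^4,BG]\cong\mathbb Z$, and so $[M_{l,m},BG]\cong\operatorname{coker}\!\big(m\colon\mathbb Z\to\mathbb Z\big)$, which equals $\mathbb Z$ when $m=0$ and $\mathbb Z_m$ when $m\geq 2$. This yields (1) and (2).

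For the last assertion, recall from \eqref{C2} that $\pi=\tilde\pi\circ q$, so $\pi^{*}=q^{*}\circ\tilde\pi^{*}$. By Lemma \ref{quotient} the composite $\tilde\pi\circ g\colon S^4\to S^4$ is a homotopy equivalence, hence $g^{*}\tilde\pi^{*}=(\tilde\pi g)^{*}$ is an isomorphism; as $g^{*}$ is an isomorphism, $\tilde\pi^{*}$ is one too. If $m\geq 2$ then $q^{*}$ is surjective, so $\pi^{*}$ is surjective. If $m=0$ then $\varphi'$ is nullhomotopic, so $\delta^{*}=0$, whence $q^{*}$ is injective as well as surjective, i.e. a bijection, and therefore $\pi^{*}$ is a bijection; equivalently, a cross section of $\pi\colon M_{l,0}\to S^4$ provides $s^{*}$ with $s^{*}\pi^{*}=\mathrm{id}$, giving injectivity directly.

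The step I expect to be the main obstacle is the identification of $\delta g$ with a degree-$m$ self-map of $S^4$: this requires matching the Puppe sequence of \eqref{c:s3inc} with that of the $4$-skeletal cofibration of $M_{l,m}$ and tracking the cellular structure carefully, while simultaneously being careful that the sequences in play are exact sequences of pointed sets carrying the $[\Sigma S^3,BG]$-coaction (so that ``$\operatorname{coker}$'' genuinely computes $[M_{l,m},BG]$) rather than sequences of abelian groups.
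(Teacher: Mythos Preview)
Your proof is correct and follows essentially the same route as the paper: both apply $[-,BG]$ to the Puppe sequence of $S^{3}\hookrightarrow M_{l,m}$, identify the cofibre with $S^{4}\vee S^{7}$ via Lemma~\ref{quotient}, kill the $S^{7}$-summand using $\pi_{6}(G)=0$, and compare with the $4$-skeletal cofibration to see that the connecting map induces multiplication by $m$ on $[S^{4},BG]\cong\mathbb{Z}$. The paper phrases the orbit computation via the explicit coactions $\psi$ and $\psi'$ (diagrams \eqref{d:coactions}--\eqref{d:coactions3}), whereas you package the same argument as $\operatorname{coker}(\delta^{*})$ using that $D_{l,m}$ is a suspension and $g^{*}$ is an isomorphism; both arguments for $\pi^{*}$ also agree, factoring $\pi$ through $D_{l,m}\simeq S^{4}\vee S^{7}$.
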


\begin{proof}
For any simply connected simple compact Lie group $G$ there are isomorphisms \cite{Jam2} 
\begin{equation}\label{isoLG}
\pi_3(BG)\cong\pi_2(G)\cong0.
\end{equation}
 Let $f:M_{l,m}\rightarrow BG$ be a map. By \eqref{isoLG} the composite $S^3\hookrightarrow M_{l,m}\overset{f}{\rightarrow} BG$ is nullhomotopic. Using Lemma \ref{quotient} there is a homotopy commutative diagram 
 \begin{equation}\label{iso}
\xymatrix{
S^3\ar[r]^-{i}&M_{l,m}\ar[r]^-{a}\ar[d]_{f}& S^4\vee S^7\ar[r]^-{\delta}\ar@{.>}[dl]^-{\tilde f}&S^4\ar[r]^-{\Sigma i}&\Sigma M_{l,m}\\
&BG&&
}
\end{equation}
where $a$ is the composite $M_{l,m}\xrightarrow{q}D_{l,m}\xrightarrow{\simeq}S^4\vee S^7$, the top row is a cofibration sequence and $\tilde f:S^4\vee S^7\rightarrow BG$ is an extension of $f$. Therefore, applying the functor $[-,BG]$ to the cofibration sequence in \eqref{iso} we obtain an exact sequence of homotopy sets
 \begin{equation}\label{es:hs}
\xymatrix{ 
[S^4,BG] \ar[r]^-{\delta^{*}}\ar@{=}[d]& [S^4\vee S^7,BG]\ar[r]^-{a^{*}}\ar@{=}[d]&[M_{l,m},BG]\ar[r]^-{}& 0\\
\mathbb{Z}&\mathbb{Z}.
}
\end{equation}  Let $\psi':S^4\vee S^7\to S^4\vee S^7\vee S^4$ be the coaction of $S^4$ on $S^4\vee S^7$ in the cofibration sequence in \eqref{iso}. The homotopy set $[M_{l,m},BG]$ might not be a group. Therefore, we will use the action $$(\psi')^*:[S^4,BG]\times[S^4\vee S^7,BG]\to [S^4\vee S^7,BG],$$ induced by $\psi'$, to compute $[M_{l,m},BG].$

Let $M^4_{l,m}$ be the 4-skeleton of $M_{l,m}$. Then $M_{l,m}^4\simeq S^3\vee S^4$ or  $M_{l,m}^4\simeq P^4(m)$. In either case, $M^4_{l,m}$ is a co-$H$-space. From the exact sequence induced by the attaching map of the 4-cell,
\begin{equation}\label{c:4sk}
\xymatrix{
S^3\ar[r]^{m}&S^3\ar[r]&M_{l,m}^4\ar[r]&S^4,}
\end{equation}
we obtain an exact sequence of groups
\begin{equation}\label{4-skcof}
\xymatrix{
[S^4,BG]\ar[r]^-{m^*}&[S^4,BG]\ar[r]&[M^4_{l,m},BG]\ar[r]&0}
\end{equation}
where $\pi_4(BG)\cong\pi_3(G)\cong\mathbb{Z}$ and $m^*:\mathbb{Z}\to\mathbb{Z}$ is multiplication by $m$. The  coaction $\psi =S^4\to S^4\vee S^4$ associated to the cofibration \eqref{c:4sk} induces an action of homotopy sets
\begin{equation*}
\psi^*:[S^4,BG]\times [S^4,BG]\to [S^4,BG].
\end{equation*}
Exactness of \eqref{4-skcof} implies that $[M^4_{l,m},BG]=\mathbb{Z}_m$. By construction, the orbits under the action $\psi^*$ are equal to the cosets of the image of $m^*$. 
The map $S^3\xrightarrow i M_{l,m}$ factors through the 4-skeleton $M^4_{l,m}$. Therefore we have a homotopy cofibration diagram
\begin{equation}\label{d:coactions}
\xymatrix{
S^3\ar[r]\ar@{=}[d]&M^4_{l,m}\ar[r]\ar[d]&S^4\ar[r]^-{m}\ar[d]_-{i_1}& S^4\ar@{=}[d]\\
S^3\ar[r]^-i&M_{l,m}\ar[r]^-{a}&S^4\vee S^7\ar[r]^-\delta & S^4
}
\end{equation}
where $i_1:S^4\to S^4\vee S^7$ is the inclusion of the first factor into the wedge.  From \eqref{d:coactions} 
we obtain a homotopy commutative diagram as follows
\begin{equation}\label{d:coactions2}
\xymatrix{
S^4\ar[r]^-{\psi}\ar[d]_-{i_1}& S^4\vee S^4\ar[d]^-{i_1\vee\mathbbm{1}}\\
S^4\vee S^7\ar[r]^-{\psi'}& S^4\vee S^7\vee S^4.
}
\end{equation}
Applying the functor $[-,BG]$ we obtain a commutative diagram of homotopy groups
\begin{equation}\label{d:coactions3}
\xymatrix{
\pi_4(BG)\times \pi_7(BG)\times\pi_4(BG)\ar[r]^-{(\psi')^*}\ar[d]_-{i_1^*\times\mathbbm{1}}& \pi_4(BG)\times\pi_7(BG)\ar[d]^-{i_1^*}\\
 \pi_4(BG)\times\pi_4(BG)\ar[r]^-{\psi^*}& \pi_4(BG).\\
}
\end{equation}
Now assume $\pi_7(BG)\cong\pi_6(G)\cong0$. The vertical arrows in \eqref{d:coactions3} are isomorphisms implying that $(\psi')^*=\psi^*$. Since $(\psi')^*=\psi^*$, $[M_{l,m},BG]=\mathbb{Z}$ if $m=0,$ and $[M_{l,m},BG]=\mathbb{Z}_m$ if $m>1.$

Finally, we analyse the induced map $\pi^{*}:[M_{l,m},BG]\to [S^4,BG].$ By Lemma \ref{quotient} the composite $$M_{l,m}\xrightarrow{q} D_{l,m}\xrightarrow{\simeq}S^4\vee S^7\xrightarrow{pinch} S^4$$
is homotopic to the projection $\pi:M_{l,m}\to S^4$.
 Consider the commutative diagram
\begin{equation}\label{d:tri}
\xymatrix{
[M_{l,m},BG]&[S^4,BG]\times[S^7,BG]\ar[l]_-{a^*} \\
[S^4,BG]\ar[ur]_-{p_1}\ar[u]^-{\pi^{*}}
}
\end{equation}
where the map $p_1$ is the projection onto the first factor. From \eqref{es:hs} we get that $a^*$ is an isomorphism if $m=0$ and a surjection if $m>1$. Therefore, by the commutativity of \eqref{d:tri}, the induced map ${\pi^*:[M_{l,m},BG]\to[S^4,BG]}$ is an isomorphism if $m=0$, and a surjection if $m>1.$ 
\end{proof}

    Now let $m=1$. From the homotopy classification of the manifolds $M_{l,m}$, the spaces $M_{l,1}$ are homotopy equivalent to $S^7.$ Therefore we obtain $$[M_{l,1},BG]=[S^7,BG]=\pi_7(BG)\cong\pi_6(G).$$ From this and Proposition \ref{bundles} we obtain a classification of principal $G$-bundles over manifolds $M_{l,m}$ that holds for most of the simply connected simple compact Lie groups. Using the notation $\mathbb Z_0=\mathbb Z$ and $\mathbb Z_1=0$ we now state our result for the classification of principal $G$-bundles over $M_{l,m}.$

\begin{cor}\label{c:bundles}
Let $G$ be a simply connected simple compact Lie group such that $\pi_6(G)= 0$. There is a one-to-one correspondence  $$Prin_G(M_{l,m})\xrightarrow{1-1}\mathbb{Z}_m.$$ \qed
\end{cor}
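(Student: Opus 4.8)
The plan is to assemble Corollary \ref{c:bundles} directly from the three computations already in hand, reconciling the cases $m=0$, $m=1$ and $m\geq 2$ into the single uniform statement using the conventions $\mathbb Z_0=\mathbb Z$ and $\mathbb Z_1=0$. First I would recall that, as observed just before Proposition \ref{bundles}, the evaluation fibration argument gives a bijection $Prin_G(M_{l,m})\leftrightarrow\langle M_{l,m},BG\rangle\leftrightarrow[M_{l,m},BG]$: the map $ev^*$ to $\pi_0(BG)$ is trivial since $BG$ is connected, and the $\pi_1(BG)\cong\pi_0(G)$-action on $[M_{l,m},BG]$ is trivial since every $G$ under consideration is connected, so the quotient $[M_{l,m},BG]\to\langle M_{l,m},BG\rangle$ is a bijection. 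Thus it suffices to identify $[M_{l,m},BG]$ with $\mathbb Z_m$ in every case.

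Next I would split into the three ranges of $m$. For $m=0$, Proposition \ref{bundles}(1) gives $[M_{l,0},BG]=\mathbb Z=\mathbb Z_0$. For $m\geq 2$, Proposition \ref{bundles}(2) gives $[M_{l,m},BG]=\mathbb Z_m$. The remaining case $m=1$ is handled by the homotopy classification results quoted in the excerpt (Crowley--Escher): $P^4(1)\simeq *$, hence $M_{l,1}\simeq S^7$, so $[M_{l,1},BG]=[S^7,BG]=\pi_7(BG)\cong\pi_6(G)=0=\mathbb Z_1$, where the last equality is the standing hypothesis on $G$. Stitching these together under the notational convention yields the one-to-one correspondence $Prin_G(M_{l,m})\xrightarrow{1-1}\mathbb Z_m$ for all $m\geq 0$, and the homeomorphisms $M_{l,m}\cong M_{-l,-m}\cong M_{l+m,-m}$ noted at the start of Section \ref{s:classpring} extend the conclusion to negative $m$.

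There is really no substantive obstacle here: Corollary \ref{c:bundles} is a bookkeeping consequence of Proposition \ref{bundles} together with the $m=1$ discussion, so the ``proof'' is essentially a sentence pointing to those results, which is why the excerpt marks it \qed with no written argument. If I wanted to be maximally careful, the one point deserving a word is the passage from pointed to unpointed homotopy classes — i.e.\ verifying that the $\pi_0(G)$-action is genuinely trivial rather than merely that $\pi_0(G)$ is trivial — but since each $G$ in the list is connected this is immediate, and the same remark already appears in the paragraph preceding Proposition \ref{bundles}. So the only ``work'' is to state clearly that the cases $m=0$, $m=1$, $m\geq 2$ exhaust all nonnegative $m$ and that the conventions $\mathbb Z_0=\mathbb Z$, $\mathbb Z_1=0$ make the three answers a single formula.
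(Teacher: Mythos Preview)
Your proposal is correct and matches the paper's approach exactly: the corollary is stated with a \qed\ and no written argument precisely because it is the immediate collation of Proposition~\ref{bundles} (cases $m=0$ and $m\geq 2$) with the $m=1$ paragraph that precedes it, together with the free/based identification $\langle M_{l,m},BG\rangle\cong[M_{l,m},BG]$ already established. There is nothing to add.
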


 \section{Homotopy types of $\Sigma M_{l,m}$}
 \label{s:suspension}
  
In this section we discuss the homotopy types of $\Sigma M_{l,m},$ the suspensions of the manifolds $M_{l,m}$. The description of $\Sigma M_{l,m}$ will be needed later to obtain homotopy decompositions of the gauge groups. 

We start with a general result regarding the suspension of total spaces of $S^{n-1}$-bundles over $S^n$ that have cross sections. Let $X^n$ be the $n$-skeleton of $X.$ Let $1_X$ be the identity map on $X$.

 \begin{lemma}\label{highbun}    
Let $\pi:X\to S^n$ be
an $S^{n-1}$-bundle over $S^{n}$, $n\geq 3$, with a cross section.  Then $X^n \simeq S^{n-1} \vee S^n$ and there is a homotopy equivalence
\begin{equation*}
\Sigma X\simeq \Sigma Y\vee S^{n+1},
 \end{equation*}
 where $Y$ is the homotopy cofibre of the composite $S^{2n-2}\xrightarrow{\varphi} S^{n-1}\vee S^n\xrightarrow{pinch} S^{n-1}$. Here the map ${\varphi: S^{2n-2} \to S^{n-1} \vee S^n}$ is the attaching map of the top cell of $X$.
\end{lemma}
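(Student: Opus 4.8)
The plan is to build $X$ cell by cell starting from a cross section and to exploit the splitting of the section to control the attaching map of the top cell. First I would record the cell structure: since $\pi\colon X\to S^n$ is an $S^{n-1}$-bundle over $S^n$ with a cross section $s\colon S^n\to X$, the base $S^n$ is $(n-1)$-connected, and the Serre spectral sequence (or the clutching-function description) shows that $X$ is built from $S^{n-1}$ (the fibre) and $S^n$ by attaching a single top cell $e^{2n-1}$. The cross section splits the cofibration $S^{n-1}\to X^{n}\to S^{n}$, so $X^n\simeq S^{n-1}\vee S^n$; this is the first assertion. Write $\varphi\colon S^{2n-2}\to S^{n-1}\vee S^n$ for the attaching map of $e^{2n-1}$, so that $X\simeq (S^{n-1}\vee S^n)\cup_{\varphi}e^{2n-1}$.

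Next I would suspend. After one suspension the wedge splits as a coproduct in the category of suspensions, so $\Sigma\varphi\colon S^{2n-1}\to \Sigma S^{n-1}\vee\Sigma S^n = S^{n}\vee S^{n+1}$ decomposes, up to homotopy, as the sum $\Sigma\varphi\simeq (\Sigma\varphi_1)+(\Sigma\varphi_2)$ of its two components $\Sigma\varphi_i$ obtained by pinching onto each wedge summand. The key structural point is that the component $\varphi_2\colon S^{2n-2}\to S^n$ landing in the section $S^n$ must be null-homotopic: this is exactly what having a cross section buys us. Indeed, the composite $S^{2n-2}\xrightarrow{\varphi}S^{n-1}\vee S^n\xrightarrow{pinch}S^n\xrightarrow{s}X$ is null (it is the attaching map of a cell of $X$ composed into $X$), and since $s$ is a split injection up to homotopy on the relevant skeleton, $\varphi_2$ itself is null. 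I would phrase this carefully using diagram \eqref{iso}-style arguments, or more simply: the retraction $\pi\colon X\to S^n$ composed with $\varphi$ computes $\varphi_2$ up to the lower component, and $\pi\circ(\text{attaching map})$ is null because it bounds in $X$; combined with the section one concludes $\varphi_2\simeq 0$. Hence after suspension the top cell of $\Sigma X$ is attached only along $\Sigma\varphi_1\colon S^{2n-1}\to S^n$, where $\varphi_1=pinch\circ\varphi\colon S^{2n-2}\to S^{n-1}$.

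Finally I would assemble the decomposition. Let $Y$ be the homotopy cofibre of $\varphi_1\colon S^{2n-2}\to S^{n-1}$, so $Y = S^{n-1}\cup_{\varphi_1}e^{2n-1}$. Because the $S^n$-component of the attaching map of $e^{2n-1}$ in $\Sigma X$ vanishes, the cofibre construction factors: $\Sigma X\simeq \bigl(\Sigma S^{n-1}\cup_{\Sigma\varphi_1}e^{2n}\bigr)\vee \Sigma S^n \simeq \Sigma Y\vee S^{n+1}$. Concretely, the inclusion $\Sigma S^n=S^{n+1}\hookrightarrow\Sigma X$ coming from the suspended section admits a retraction (it is a wedge summand already at the level of $\Sigma X^n$, and the top cell does not interact with it by the vanishing of $\varphi_2$), so $\Sigma X$ splits off $S^{n+1}$ with complementary summand $\Sigma Y$. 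I would make this last gluing precise by a pushout/cofibre diagram: suspend the cofibration $S^{2n-2}\xrightarrow{\varphi}S^{n-1}\vee S^n\to X$, use $\Sigma\varphi\simeq\Sigma\varphi_1\vee 0$ to see the pushout as a wedge of the two one-variable pushouts, and identify the first with $\Sigma Y$ and the second with $\Sigma(S^n)\simeq S^{n+1}$ (cofibre of a null map out of $S^{2n-1}$, which is $S^{n+1}\vee S^{2n}$— here I must be careful, and in fact the clean statement is obtained by noting the $e^{2n-1}$-cell is entirely absorbed into the $\Sigma Y$ summand, leaving precisely $S^{n+1}$).

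The main obstacle is the vanishing of the $S^n$-component $\varphi_2$ of the attaching map: one has to argue cleanly that the cross section forces this, rather than merely that $\varphi$ composed into $X$ is null. The honest route is to observe that $\pi\circ s\simeq 1_{S^n}$ gives a splitting $X^n\simeq S^{n-1}\vee S^n$ in which the $S^n$ summand is the image of the section, and then that the attaching map $\varphi$, when followed by the pinch onto this $S^n$, represents the obstruction to extending the section over the top cell — but the section is already defined on all of $S^n = $ base, hence extends trivially, forcing $\varphi_2\simeq 0$. Everything else is a formal manipulation of cofibre sequences after a single suspension, where wedges behave as coproducts and sums of maps out of spheres are well-defined.
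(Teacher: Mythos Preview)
Your overall strategy matches the paper's: identify $X^n\simeq S^{n-1}\vee S^n$ via the section, show the $S^n$-component $\varphi_2$ of the attaching map is null, then conclude $\Sigma\varphi$ factors through $S^n\hookrightarrow S^n\vee S^{n+1}$ as $\Sigma\varphi_1$, giving the splitting. The middle argument you give for $\varphi_2\simeq *$ --- namely that $\varphi_2$ equals $\pi$ restricted to the skeleton composed with $\varphi$, and $i\circ\varphi\simeq *$ as consecutive maps in a cofibration --- is exactly the paper's argument and is correct. (Your final ``honest route'' about the obstruction to extending the section over the top cell is confused: the section is a map from the base, so there is no top cell of $X$ to extend over; drop that paragraph and keep the $\pi\circ i\circ\varphi$ argument.)

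There is, however, a real gap at the step where you claim $\Sigma\varphi\simeq(\Sigma\varphi_1)+(\Sigma\varphi_2)$. Your justification, ``the wedge splits as a coproduct in the category of suspensions,'' does not establish this: wedges are always coproducts, and a map \emph{into} a wedge is in general \emph{not} determined by its pinch projections (the Whitehead product $[\iota_{n-1},\iota_n]:S^{2n-2}\to S^{n-1}\vee S^n$ has both projections null but is essential). The paper fills this gap by invoking the Hilton--Milnor theorem to write
\[
\varphi \;=\; t[\iota_{n-1},\iota_n] \;+\; \underline{\alpha} \;+\; \underline{\beta},
\]
then observes $\beta=\varphi_2\simeq *$ and that the Whitehead product suspends trivially, whence $\Sigma\varphi=\Sigma\underline{\alpha}$. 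An equally valid patch, if you want to avoid Hilton--Milnor, is to note that the inclusion $S^n\vee S^{n+1}\hookrightarrow S^n\times S^{n+1}$ is $2n$-connected (its cofibre is $S^{2n+1}$), so $\pi_{2n-1}(S^n\vee S^{n+1})\cong\pi_{2n-1}(S^n)\oplus\pi_{2n-1}(S^{n+1})$ and every map $S^{2n-1}\to S^n\vee S^{n+1}$ really is the sum of its projections. Either way, you must say something here; the current sentence does not do the job.
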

\begin{proof} 
The topological space $X$ is homotopy equivalent to a $CW$-complex with the following cellular structure
$$S^{n-1}\cup e^n\cup_\varphi e^{2n-1},$$
where $\varphi$ is the attaching map of the top cell. 
There is a homotopy commutative diagram
\begin{equation}\label{fibcofib}
\xymatrix{
S^{n-1} \ar@{^{(}->}[r] \ar@{=}[d] & X^n \ar[r]^-q \ar@{^{(}->}[d] & S^{n} \ar@{=}[d] \\
S^{n-1} \ar@{^{(}->}[r] & X \ar[r]^-{\pi} & S^{n}
}\end{equation}
where the top row is the cofibration sequence induced by the inclusion of the bottom cell into the $n$-skeleton, the bottom row is the fibration sequence of the sphere bundle, and $q$ is the quotient map. Since $X^n=X^{2n-2}$ by connectivity, the map $q$ also has a right homotopy inverse, implying that there is a homotopy equivalence $X^n\simeq S^{n-1}\vee S^n.$

Now consider the cofibration sequence induced by the attaching map $S^{2n-2}\overset{\varphi}{\longrightarrow}S^n\vee S^{n-1}$:
\begin{equation}\label{high}
\xymatrix{
S^{2n-2}\ar[r]^-{\varphi}&S^{n-1}\vee S^{n}\ar[r]^-{i}&X\ar[r]^-{\rho}&S^{2n-1}\ar[r]^{\Sigma \varphi\quad}& S^{n}\vee S^{n+1}\ar[r]^-{\Sigma i}&\Sigma X,
}
\end{equation}
where $i$ is the inclusion and $\rho$ is the pinch map to the $(2n-1)$-cell.
By the Hilton-Milnor Theorem \cite{HMT1,HMT2}
there is an isomorphism $$\pi_{2n-2}(S^{n-1}\vee S^{n})\cong \pi_{2n-2}(S^{2n-2})\times\pi_{2n-2}(S^n)\times \pi_{2n-2}(S^{n-1}).$$
Thus $\varphi \in \pi_{2n-2}(S^{n-1}\vee S^n)$ can be expressed as
\begin{equation}\label{attaching}
\varphi=t[1_{S^{n-1}},1_{S^n}]+\underline\alpha+\underline\beta
\end{equation}
Here the Whitehead product $[1_{S^{n-1}},1_{S^n}]$ factors through a generator of $\pi_{2n-2}(S^{2n-2})$ and $t \in \mathbb{Z}$; for any $\alpha \in \pi_{2n-2}(S^{n-1})$ and $\beta \in \pi_{2n-2}(S^{n})$, let $\underline{\alpha}$ and $\underline{\beta}$ be the elements of $\pi_{2n-2}(S^{n-1}\vee S^n)$ which are represented by the maps $$\underline{\alpha}: S^{2n-2}\overset{\alpha}{\rightarrow} S^{n-1}\hookrightarrow S^{n-1}\vee S^n$$ and $$\underline{\beta}: S^{2n-2}\overset{\beta}{\rightarrow} S^n\hookrightarrow S^{n-1}\vee S^n.$$ 
Consider the diagram 
\begin{equation*}
\begin{gathered}
\xymatrix{
S^{2n-2}\ar[r]^-{\varphi}\ar[dr]_{\beta}&S^n\vee S^{n-1}\ar[r]^-{i}\ar[d]^{p_1}&X\ar[d]^{\pi}\\
&S^n\ar@{=}[r]&S^n&
}
\end{gathered}
\end{equation*}
The triangle homotopy commutes by definition of $\varphi$ and $\underline\beta,$ and the square homotopy commutes by the commutativity of right square in \eqref{fibcofib}. Thus $\beta\simeq\pi\circ i\circ\varphi$, but $i\circ\varphi$ is nullhomotopic since $i$ and $\varphi$ are consecutive maps in a cofibration. Hence $\beta$ is nullhomotopic and therefore so is $\underline\beta$.
Hence \eqref{attaching} is reduced to $$\varphi=t[1_{S^{n-1}},1_{S^n}]+\underline\alpha.$$

After suspension we have $\Sigma\varphi=\Sigma\underline\alpha$ since $\Sigma[1_{S^{n-1}},1_{S^n}]\simeq *$. Let $Y$ be the homotopy cofibre of the map $\alpha: S^{2n-2}\rightarrow S^{n-1}$. Thus if $\Sigma\alpha\simeq{*}$  then $\Sigma\varphi\simeq *$. Therefore  the map $\Sigma i$ in \eqref{high} has a  left homotopy inverse, and $\Sigma X\simeq S^{2n}\vee S^{n}\vee S^{n+1}$. If instead $\Sigma\alpha$ is not nullhomotopic, then $\Sigma\varphi\not\simeq *$.  
Consider the following part of the homotopy cofibration sequence \eqref{high}
\begin{equation*}
\xymatrix{
S^{2n-1}\ar[r]^{\Sigma \varphi\quad}& S^{n+1}\vee S^{n}\ar[r]^-{\Sigma i}& \Sigma X.
}
\end{equation*}
Thus $\Sigma\varphi=\Sigma\underline\alpha=j\circ\Sigma\alpha$, where $j:S^{n}\longrightarrow S^{n}\vee S^{n+1}$ is the inclusion into the wedge. Therefore $~{\Sigma X\simeq \Sigma Y\vee S^{n+1}}$, where $Y$ is defined by the cofibration sequence  $$S^{2n-2}\overset{\alpha}{\longrightarrow} S^{n-1}\longrightarrow Y,$$ for $\alpha\in \pi_{2n-2}(S^{n-1})$.
\end{proof}
 Let $S^6\xrightarrow{\varphi} S^3\vee S^4$ be the attaching map of the top cell of the manifold $M_{l,0}$.
\begin{proposition}\label{p:susp}
There is a homotopy equivalence
$$\Sigma M_{l,0}\simeq\Sigma Y_{l}\vee S^5,$$ where $Y_{l}$ is the homotopy cofibre of the composite $S^{6}\xrightarrow{\varphi} S^{3}\vee S^4\xrightarrow{p_1} S^{3}$. Further, if $l'\equiv \pm l\pmod {12}$ there is a homotopy equivalence $\Sigma M_{l,0}\simeq \Sigma M_{l',0}.$ In particular $$\Sigma M_{l,0}\simeq S^8\vee S^4\vee S^5,$$
if $l\equiv 0\pmod{12}$.
\end{proposition}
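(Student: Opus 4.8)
The plan is to read off the first assertion as an immediate instance of Lemma~\ref{highbun}, and then to obtain the two refinements from the James--Whitehead homotopy classification of the manifolds $M_{l,0}$ together with the standard splitting of the suspension of a product.

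First I would apply Lemma~\ref{highbun} with $n=4$ and $X=M_{l,0}$. The hypotheses hold: since $m=0$ the bundle $\pi\colon M_{l,0}\to S^{4}$ admits a cross section, so $M_{l,0}$ is the total space of an $S^{3}$-bundle over $S^{4}$ with a section. Lemma~\ref{highbun} then yields $M^{4}_{l,0}\simeq S^{3}\vee S^{4}$ and a homotopy equivalence
$$\Sigma M_{l,0}\simeq\Sigma Y_{l}\vee S^{5},$$
where $Y_{l}$ is the homotopy cofibre of the composite $S^{6}\xrightarrow{\varphi}S^{3}\vee S^{4}\xrightarrow{p_{1}}S^{3}$ and $\varphi$ is the attaching map of the top cell of $M_{l,0}$. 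This is exactly the first statement. It is worth recording, for what follows, that by the Hilton--Milnor argument inside the proof of Lemma~\ref{highbun} one has $\varphi=t[1_{S^{3}},1_{S^{4}}]+\underline\alpha$ for some $t\in\mathbb{Z}$ and $\alpha\in\pi_{6}(S^{3})$; since post-composition with $p_{1}$ is a homomorphism on $\pi_{6}$, with $p_{1}\circ[1_{S^{3}},1_{S^{4}}]=[1_{S^{3}},p_{1}\circ 1_{S^{4}}]$ nullhomotopic because $p_{1}$ collapses the $S^{4}$ summand, and $p_{1}\circ\underline\alpha=\alpha$, the composite $p_{1}\circ\varphi$ is precisely the element $\alpha\in\pi_{6}(S^{3})\cong\mathbb{Z}_{12}$ whose cofibre defines $Y_{l}$.

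Next I would prove the second assertion. James and Whitehead showed \cite{JW1} that $M_{l,0}\simeq M_{l',0}$ whenever $l\equiv\pm l'\pmod{12}$; applying $\Sigma$ to such an equivalence gives $\Sigma M_{l,0}\simeq\Sigma M_{l',0}$ at once. For the ``in particular'' clause, take $l'=0$, so that $\Sigma M_{l,0}\simeq\Sigma M_{0,0}$ when $l\equiv0\pmod{12}$. Now $M_{0,0}$ is the total space of the trivial bundle, i.e.\ $M_{0,0}=S^{3}\times S^{4}$, and the standard homotopy equivalence $\Sigma(A\times B)\simeq\Sigma A\vee\Sigma B\vee\Sigma(A\wedge B)$ with $A=S^{3}$, $B=S^{4}$ and $S^{3}\wedge S^{4}=S^{7}$ gives
$$\Sigma M_{l,0}\simeq S^{4}\vee S^{5}\vee S^{8},$$
as required. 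The same conclusion can be seen through $Y_{l}$: the attaching map of the top cell of $S^{3}\times S^{4}$ is the Whitehead product $[1_{S^{3}},1_{S^{4}}]$, so $p_{1}\circ\varphi$ is trivial for $l=0$, whence $Y_{0}\simeq S^{3}\vee S^{7}$ and $\Sigma Y_{0}\vee S^{5}\simeq S^{4}\vee S^{8}\vee S^{5}$; the general case $l\equiv0\pmod{12}$ then follows from $M_{l,0}\simeq M_{0,0}$.

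I do not expect a genuine obstacle here: the first assertion is a direct corollary of Lemma~\ref{highbun}, and the refinements merely assemble cited results. The only point needing a little care is matching the homotopy cofibre produced by Lemma~\ref{highbun} with the cofibre of the composite $p_{1}\circ\varphi$ named in the statement --- handled by the naturality of Whitehead products under $p_{1}$ noted above --- and recognising that the homotopy type of $Y_{l}$ depends on $l$ only through $\pm l\pmod{12}$, which one imports from the James--Whitehead classification rather than by computing $p_{1}\circ\varphi\in\pi_{6}(S^{3})$ by hand.
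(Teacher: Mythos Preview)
Your proof is correct. The first assertion and the ``in particular'' clause are handled exactly as in the paper: a direct application of Lemma~\ref{highbun} for the splitting $\Sigma M_{l,0}\simeq\Sigma Y_l\vee S^5$, and the standard suspension splitting of $S^3\times S^4=M_{0,0}$ for the case $l\equiv 0\pmod{12}$.

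For the middle claim (that $\Sigma M_{l,0}\simeq\Sigma M_{l',0}$ when $l'\equiv\pm l\pmod{12}$) you take a slightly different and more economical route than the paper. You invoke the James--Whitehead classification at the level of the manifolds themselves and simply suspend the equivalence $M_{l,0}\simeq M_{l',0}$. The paper instead works intrinsically with $\Sigma Y_l$: it writes the attaching map as $\varphi=[\iota_3,\iota_4]+t_l\underline{\nu'}$ with $\nu'$ a generator of $\pi_6(S^3)\cong\mathbb Z_{12}$, observes that $\Sigma\varphi$ reduces to $t_l\Sigma\nu'$, and then argues via self-equivalences of $S^4$ that $\Sigma Y_l\simeq\Sigma Y_{l'}$ precisely when $t_l\equiv\pm t_{l'}\pmod{12}$, using that the $J$-homomorphism $l\mapsto t_l$ is onto. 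This buys a bit more than you prove --- an explicit identification of the attaching map of $\Sigma Y_l$ and an ``if and only if'' for $\Sigma Y_l\simeq\Sigma Y_{l'}$ --- but none of that extra information is needed for the proposition as stated, so your shortcut is entirely adequate.
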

\begin{proof}
 There is a cofibration sequence
 \begin{equation}\label{c:Mc}
 \xymatrix{
 S^6\ar[r]^-{\varphi}&S^3\vee S^4\ar[r]&M_{l,0}.
 }
 \end{equation}
where $\varphi$ is the attaching map of the top cell. We can write the attaching map $\varphi$ as $$\varphi=[\iota_3,\iota_4]+t_l\underline\nu'.$$ Here $[\iota_3,\iota_4]$ is the Whitehead product of the identiy maps of $S^3$ and $S^4$ and the map $t_l\underline\nu'$ is the composite $S^6\xrightarrow{t_l\nu'}S^3\hookrightarrow S^3\vee S^4$, where $\nu'$ is a generator of $\pi_6(S^3)\cong\mathbb Z_{12}$ \cite{Tod} and $t_l\in\mathbb Z_{12}$. Since the map $\pi:M_{l,0}\to S^4$ has a section, by Lemma \ref{highbun} there is a homotopy equivalence 
\begin{equation}\label{eq:susp}
\Sigma M_{l,0}\simeq\Sigma Y_{l,0}\vee S^5,
\end{equation}
 where $\Sigma Y_{l,0}$ is the homotopy cofibre of the composite $S^{7}\xrightarrow{\Sigma\varphi} S^{4}\vee S^5\xrightarrow{pinch} S^{4}$. The map $\Sigma\varphi$ is homotopic to the composite $S^7\xrightarrow{t_l\Sigma\nu'}S^4\hookrightarrow S^4\vee S^5$, where the element $\Sigma\nu'$ generates a subgroup of order 12 in $\pi_7(S^4)$ \cite{Tod}.

Set $Y_l:=Y_{l,0}$. The $J$-homomorphism $J:\pi_3(SO(3))\to \pi_6(S^3)$ which send, $l$ to $t_l$, is an epimorphism \cite{JW1}. Observe that two spaces $\Sigma Y_l$, $\Sigma Y_{l'}$ are homotopy equivalent if and only if there is a homotopy equivalence $\theta:S^4\to S^4$ such that $$\theta^*(t_l\Sigma\nu')=t_{l'}\Sigma\nu',$$
 where $\theta^*$ is the automorphism of $\pi_7(S^4)$ induced by $\theta.$ Any self-equivalence of $S^4$ is homotopic to $\pm1_{S^4}.$ Since, $t_l, t_{l'}\in\mathbb Z_{12}$, we have that $\Sigma Y_l\simeq \Sigma Y_{l'}$ if and only if $l'\equiv \pm l\pmod{12}.$ 
Thus if $l'\equiv \pm l'\pmod{12}$ then $\Sigma M_{l,0}\simeq\Sigma M_{l',0}$. In particular $M_{0,0}=S^3\times S^4$ and $\Sigma (S^4\times S^3)\simeq S^8\vee S^5\vee S^4$. Identifying summands in \eqref{eq:susp} we have that $Y_{0,0}\simeq S^8\vee S^4$ and therefore there is a homotopy equivalence 
 $\Sigma M_{l,0}\simeq S^8\vee S^5\vee S^4$ if $l\equiv 0\pmod{12}.$
\end{proof}

In order to obtain results on the gauge groups over manifolds $M_{l,m}$ with torsion in homology we will require $p$-localisations of nilpotent spaces in the sense of \cite{HMR}. A connected $CW$-complex is nilpotent if $\pi_1(X)$ is nilpotent and acts nilpotently on $\pi_n(X)$ for all $n\geq2$. In particular, if $X$ is simply connected then it is nilpotent. Let $X$ and $Y$ be connected $CW$-complexes. By \cite[Corollary 2.6]{HMR}, given a map $f:X\to Y$, if $Y$ is nilpotent and $X$ is finite then the path components ${\rm{Map}}^f(X,Y)$ and ${\rm{Map}}_*^f(X,Y)$ are nilpotent, and these mapping spaces admit $p$-localisations, $({\rm{Map}}^f(X,Y))_{(p)}$ and $({\rm{Map}}_*^f(X,Y))_{(p)}$, for $p$ a prime. Moreover $({\rm{Map}}^f_*(X,Y))_{(p)}\simeq {\rm{Map}}^f_*(X_{(p)},Y_{(p)})$. In our case, we will make use of $p$-localisations of simply connected finite spaces $X$ homotopy equivalent to $CW$-complexes. We will also need to localise the mapping spaces ${\rm{Map}}^f(X,BG)$ and ${\rm{Map}}_*^f(X,BG)$ which is possible to do  since we restrict to Lie groups $G$ with nilpotent classifying spaces $BG$. We define the $p$-localisation of the $n$-th loop space of a nilpotent space $Y$, at a prime $p$, as follows
$$ (\Omega^n Y)_{(p)}:=\Omega^n(Y_{(p)}).$$ 
Thus given a map $f:X\to BG$, where $X$ is a finite connected complex and $G$ is a simply connected simple compact Lie group, we define the $p$-localisation of $\Omega^n{\rm{Map}}^f(X,Y)$ as follows $$(\Omega^n {\rm{Map}}^f(X,BG))_{(p)}:=\Omega^n(({\rm{Map}}^f(X,BG))_{(p)}).$$  To keep the notation simple, in the following discussions we will avoid using the subscript $(p)$ when referring to local spaces. We denote by $X\simeq_{(p)}Y$ a ($p$-local) homotopy equivalence between the $p$-local spaces $X$ and $Y$.

 
The cofibration sequence $S^n\xrightarrow m S^n\rightarrow P^{n+1}(m)$ induces a fibration sequence $${\rm{Map}}_*(P^{n+1}(m),BG)\rightarrow{\rm{Map}}_*(S^n,BG)\overset{m_*}{\rightarrow}{\rm{Map}}_*(S^n,BG),$$ where $m_*$ is the $m$-th power map. Let $\Omega^{n}BG\{m\}$ denote the space ${\rm{Map}}_*(P^{n+1}(m),BG)$. Let $v_p(m)$ be the $p$-adic valuation of $m$ at $p.$ 

\begin{proposition}\label{torman}
Let $M_{l,m}$ be the total space of an $S^{3}$-bundle over $S^{4}$ with $m\geq 2$. Localising at $p\geq 5$ there exists a $p$-local homotopy equivalence 
\begin{equation*}
\Sigma M_{l,m}\simeq_{(p)} P^5(p^r)\vee S^8,
\end{equation*} 
where $r=v_p(m).$ 

\end{proposition}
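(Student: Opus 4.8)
The plan is to exhibit $\Sigma M_{l,m}$ as a two-cell complex $P^5(p^r)\cup e^8$ and to show the $8$-cell splits off $p$-locally. First I would take the cofibration $S^3\xrightarrow{i}M_{l,m}\xrightarrow{q}D_{l,m}$ of \eqref{c:s3inc}, use Lemma~\ref{quotient} to identify $D_{l,m}\simeq S^4\vee S^7$, and read from its Puppe sequence that $\Sigma M_{l,m}$ is the homotopy cofibre $C_\delta$ of the connecting map $\delta\colon S^4\vee S^7\to\Sigma S^3=S^4$. The diagram \eqref{d:coactions} shows that $\delta$ restricted to the first wedge summand is the degree $m$ map $m\cdot 1_{S^4}$; write $\theta:=\delta|_{S^7}\in\pi_7(S^4)$ for the remaining component. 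Splitting the cone on $S^4\vee S^7$ then gives
\[
\Sigma M_{l,m}\;\simeq\;\bigl(S^4\cup_{m}e^5\bigr)\cup_{j\circ\theta}e^8\;=\;P^5(m)\cup_{j\circ\theta}e^8,
\]
where $j\colon S^4\hookrightarrow P^5(m)$ is the bottom-cell inclusion; thus the attaching map $\Sigma\varphi=j\circ\theta$ of the $8$-cell factors through the bottom cell. Localising at $p\ge 5$ one has $P^5(m)\simeq_{(p)}P^5(p^r)$ with $r=v_p(m)$, because the prime-to-$p$ part of the degree $m$ self-map of $S^4$ is a $p$-local equivalence, so it remains only to prove that $j\circ\theta$ is $p$-locally nullhomotopic.

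The crux is to show that $\theta$ lies in the torsion subgroup $\mathbb Z/12$ of $\pi_7(S^4)\cong\mathbb Z\oplus\mathbb Z/12$, equivalently that $\theta$ has Hopf invariant zero, equivalently that it desuspends. I would do this by tracking $\theta$ back to the attaching map $\varphi\in\pi_6(P^4(m))$ of the top cell of $M_{l,m}$: the class $\theta$ is the suspension of the component of $\varphi$ along the bottom cell $S^3\hookrightarrow P^4(m)$. Since $D_{l,m}\simeq S^4\vee S^7$, the composite $S^6\xrightarrow{\varphi}P^4(m)\to P^4(m)/S^3=S^4$ is nullhomotopic, so $\varphi$ lies in the subgroup of $\pi_6(P^4(m))$ generated by the image of $\pi_6(S^3)=\mathbb Z/12$ together with the relative Whitehead products supported on $S^3$; the latter suspend to zero, so $\Sigma\varphi=j\circ\theta$ is torsion of order dividing $12$ and $\theta$ may be taken in the $\mathbb Z/12$ summand. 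This is the analogue of the identification $\theta=t_l\,\Sigma\nu'$ obtained in the $m=0$ case in the proof of Proposition~\ref{p:susp}, now carried out over the Moore-space skeleton $P^4(m)$. I expect this to be the main obstacle: when $p\mid m$ the group $\pi_6(P^4(m))$ already contains $p$-torsion (one sees this from the mod $p$ cohomology Serre spectral sequence of the fibration $P^4(m)\langle 3\rangle\to P^4(m)\to K(\mathbb Z/m,3)$), so it is not enough to quote vanishing of $\pi_6$ of the base, and one must genuinely control which summand of $\varphi$ can survive one suspension.

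Granting that $\theta$ has order dividing $12$: since $p\ge 5$, $\theta$ is $p$-locally trivial in $\pi_7(S^4)$, hence so is $j\circ\theta$, and therefore the $p$-localised cofibration $P^5(p^r)\hookrightarrow\Sigma M_{l,m}\to S^8$ splits, yielding $\Sigma M_{l,m}\simeq_{(p)}P^5(p^r)\vee S^8$. When $p\nmid m$ one has $r=0$ and $P^5(1)\simeq\ast$, and the statement just reduces to $\Sigma M_{l,m}\simeq_{(p)}S^8$, which in any case follows directly from $\widetilde H_\ast(\Sigma M_{l,m};\mathbb Z_{(p)})\cong\widetilde H_\ast(S^8;\mathbb Z_{(p)})$ together with simple-connectivity.
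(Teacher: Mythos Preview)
Your outline is sound and you have put your finger on the correct difficulty, but the proposal does not actually resolve it. The assertion that ``$\varphi$ lies in the subgroup of $\pi_6(P^4(m))$ generated by the image of $\pi_6(S^3)$ together with the relative Whitehead products supported on $S^3$'' is precisely the content of the proposition, not a step one can quote. Indeed, after localising at $p\ge 5$ one has $\pi_6(S^3)=0$, so the image of $\pi_6(S^3)$ contributes nothing; your claim then reduces to the statement that every element of $\pi_6(P^4(p^r))\cong\mathbb Z_{p^r}$ is a Whitehead product. You flag this as the main obstacle but give no argument for it, and the vague phrase ``relative Whitehead products supported on $S^3$'' does not identify a well-defined subgroup. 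There is no short exact sequence of homotopy groups coming from the cofibration $S^3\to P^4(m)\to S^4$ that would let you lift $\varphi$ back to $S^3$ from the hypothesis $q'\circ\varphi\simeq *$.

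The paper closes this gap with a concrete identification of a generator. Using the Cohen--Moore--Neisendorfer splitting
\[
\Omega P^4(p^r)\;\simeq_{(p)}\;S^3\{p^r\}\times\Omega\!\left(\bigvee_{k\ge 0}P^{2k+7}(p^r)\right),
\]
one computes $\pi_6(P^4(p^r))\cong\pi_5(S^3\{p^r\})\oplus\pi_6(P^7(p^r))\cong 0\oplus\mathbb Z_{p^r}$, and the restriction of the equivalence to the $P^7(p^r)$ factor is $\widehat{[\nu,\nu]}$, the Whitehead square of the identity on $P^4(p^r)$ restricted along $P^7(p^r)\hookrightarrow\Sigma P^3(p^r)\wedge P^3(p^r)$. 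Thus a generator $\bar\sigma$ of $\pi_6(P^4(p^r))$ is represented by $S^6\hookrightarrow P^7(p^r)\xrightarrow{\widehat{[\nu,\nu]}}P^4(p^r)$, which factors through a Whitehead product and hence suspends trivially; writing $\varphi=t\bar\sigma$ gives $\Sigma\varphi\simeq *$ and the splitting follows. This is exactly the ``Whitehead product'' structure you were hoping for, but it is the Whitehead square of the identity of $P^4(p^r)$, not a product involving the bottom cell $S^3$ alone.

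A smaller point: you aim to prove that $\theta\in\pi_7(S^4)$ itself lies in the $\mathbb Z/12$ torsion summand (equivalently, has Hopf invariant zero). This is stronger than what is needed and the paper does not assert it; one only needs $j\circ\theta=\Sigma\varphi\in\pi_7(P^5(p^r))$ to be null, and the paper works directly with $\Sigma\varphi$ via the cofibration $S^6\xrightarrow{\varphi}P^4(m)\to M_{l,m}\to S^7\xrightarrow{\Sigma\varphi}P^5(m)$ rather than passing through $D_{l,m}$ and the connecting map $\delta$. Your route through $\delta$ is equivalent (since $j\theta=\Sigma\varphi$ as attaching maps of the $8$-cell), but it introduces the unnecessary question of the Hopf invariant of $\theta$.
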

\begin{proof}
There exists a cofibration sequence
\begin{equation}\label{Sbun}
\xymatrix{
S^6\ar[r]^-{\varphi}&P^4(m)\ar[r]^{i}&M_{l,m}\ar[r]^{\rho}&S^7\ar[r]^{\Sigma \varphi\quad}& P^5(m)\ar[r]^{\Sigma i}& \Sigma M_{l,m},
}
\end{equation}
where $\varphi$ is the attaching map of the top cell, $i$ is the inclusion and $\rho$ is the pinch map. Now  suppose that all spaces are localised at a prime $p\geq 5$ with $r=v_p(m)$.
Consider the cofibration sequence
\begin{equation}\label{moore}
\xymatrix{
S^3\ar[r]^m&S^3\ar[r]^-q&P^4(m).
}
\end{equation}
We have two cases to analyse: $r=0$ and $r\geq 1$. If $r=0$ then the degree map $m$ is invertible in $\mathbb{Z}_{(p)}$, so the map $m$ is a homotopy equivalence in the cofibration sequence \eqref{moore}, and therefore $P^4(m)\simeq *$. From \eqref{Sbun} we can see that the attaching map $\varphi$ is nullhomotopic and therefore $M_{l,m}\simeq_{(p)} S^7$. Moreover, we can write $P^5(1)=P^5(p^0)\simeq *$.  Hence there is a homotopy equivalence  $\Sigma M_{l,m}\simeq_{(p)}P^5(p^r)\vee S^8$ for $r=0$.

If $r\geq 1$ then the degree map $m$ is not invertible. Localising at $p$ we obtain
$P^4(m)\simeq_{(p)}P^4(p^r)$. In \cite{Sas} Sasao computed the homotopy group $\pi_6(P^4(m))$. He showed that integrally $$\pi_6(P^4(m))\cong \begin{cases} \mathbb{Z}_{(m,12)} \oplus \mathbb{Z}_m & \text{if } v_2(m) = 0, \\ \mathbb{Z}_{(m,12)/2} \oplus \mathbb{Z}_{2m} \oplus \mathbb{Z}_2 & \text{if } 1 \leq v_2(m) \leq 2, \\ \mathbb{Z}_{(m,12)} \oplus \mathbb{Z}_m \oplus \mathbb{Z}_2 & \text{if } v_2(m) \geq 3. \end{cases}$$ In all above cases, localising at $p\geq 5$ we obtain $$\pi_6(P^4(m))\cong \mathbb{Z}_{p^r}.$$
We give an alternative construction of a $p$-local generator of $\pi_6(P^4(m))$ to that given by Sasao.
Let ${\bar \sigma\in\pi_6(P^4(m))\cong \mathbb{Z}_{p^r}}$ be a generator. We can write the attaching map of the top cell as $\varphi=t\cdot\bar\sigma$ with $t\in\mathbb{Z}_{p^r}$. Notice that if $\Sigma\varphi\simeq* $ then $\Sigma i$ has a left homotopy inverse, implying $\Sigma M_{l,m}\simeq P^4(p^r)\vee S^8$. We claim that the generator $\bar\sigma$ suspends trivially.
 
Let $\nu:P^4(p^r)\rightarrow P^4(p^r)$ be the identity map. Since $\nu$ is a suspension there is a  Whitehead product ${[\nu,\nu]}:\Sigma P^3(p^r)\wedge P^3(p^r)\rightarrow P^4(p^r)$. There is a homotopy equivalence \cite{CMN}$$\Sigma P^3(p^r)\wedge P^3(p^r)\simeq_{(p)} P^7(p^r)\vee P^6(p^r).$$ This homotopy equivalence precomposed with the inclusion of $P^7(p^r)$ into the wedge determines a map $\widehat{[\nu,\nu]}:P^7(p^r)\rightarrow P^4(p^r)$. By a result of Cohen, Moore and Neisendorfer \cite{CMN}, there is a $p$-local homotopy equivalence
\begin{equation}\label{loopmoore}
\phi:S^3\{p^r\}\times\Omega \mathcal{A}\rightarrow \Omega P^4(p^r)  
\end{equation}
where $\mathcal{A}=\bigvee_{k=0}^\infty P^{4+2k+3}(p^r)$ and $S^n\{p^r\}$ denotes the homotopy fibre of the degree map $p^r:S^n\to S^n$. This homotopy equivalence is constructed so that the restriction of $\phi$ to $\Omega P^7(p^r)$ is homotopic to $\Omega\widehat{[\nu,\nu]}$.

Using \eqref{loopmoore} we get
\begin{equation*}
\pi_6(P^4(p^r))\cong\pi_5(\Omega P^4(p^r))\cong\pi_5(S^3\{p^r\})\oplus\pi_5(\Omega\mathcal{A}).
\end{equation*}
Notice that there is a homotopy fibration given by 
\begin{equation*}
\Omega S^3\longrightarrow S^3\{p^r\}\longrightarrow S^3.
\end{equation*}
As 2 and 3 are inverted we have $\pi_5(S^3)=0$ and $\pi_5(\Omega S^3)\cong\pi_6(S^3)=0$ and therefore ${\pi_5(S^3\{p^r\})=0}$.  Now $\pi_5(\Omega\mathcal{A})\cong\pi_6(\mathcal{A})\cong\pi_6(P^7(p^r))\cong\mathbb{Z}_{p^r}$, where the last two isomorphisms are given by the high connectivity of the factors in the wedge defining $\mathcal{A}$ and the Hurewicz isomorphism, respectively. Thus a generator $\bar\sigma$ of $\pi_6(P^4(p^r))$ is represented by the map
\begin{equation*}
\bar\sigma:S^6\hookrightarrow P^7(p^r)\overset{\widehat{[\nu,\nu]}}{\longrightarrow}P^4(p^r).
\end{equation*}
Since $\widehat{[\nu,\nu]}$ factors through the Whitehead product ${[\nu,\nu]}$, which suspends trivially, we obtain $\Sigma\bar\sigma\simeq*$, as claimed. 
\end{proof}
 
\section{Homotopy decompositions of gauge groups}
\label{s:torfree}

In this section we give homotopy decompositions of the gauge groups over $M_{l,m}$ for which $m\neq1$. We split our results in two cases: manifolds $M_{l,m}$ with torsion-free homology and manifolds $M_{l,m}$  with non-torsion-free homology.

Let $P_f\rightarrow X$ be a principal $G$-bundle with classifying map $f:X\to BG$. Recall that the (unpointed) gauge group of the bundle, denoted $\mathcal G^f(X)$, is the group of its bundle automorphisms. That is, an element $\phi\in \mathcal G^f(X)$ is a $G$-equivariant automorphism of $P_f$ lying over the identity map on $X$. The subgroup of $\mathcal{G}_f(X)$ that fixes one fiber is the pointed gauge group and it is denoted $\mathcal G_*^f(X)$. Let $B\mathcal G^f(X)$ be the classifying space of $\mathcal G^f(X)$. 
From \cite{Got} or  \cite{AB} there are homotopy equivalences 
\begin{equation}\label{Atiyah}
B\mathcal{G}^f(X)\simeq{\rm{Map}}^f(X,BG),
\end{equation}
\begin{equation}\label{Atiyahun}
B\mathcal{G}^f_*(X)\simeq{\rm{Map}}^f_*(X,BG),
\end{equation}
and after looping these homotopy equivalences we obtain 
\begin{equation}\label{Atiyah22}
\mathcal{G}^f(X)\simeq\Omega{\rm{Map}}^f(X,BG),
\end{equation}
\begin{equation}\label{Atiyahunn}
\mathcal{G}^f_*(X)\simeq\Omega{\rm{Map}}^f_*(X,BG).
\end{equation}

 We will make use of the equivalences \eqref{Atiyah}-\eqref{Atiyahunn} to obtain homotopy decompositions of the gauge groups. Following our discussion on localisation of nilpotent spaces (see Section 3), we define the $p$-localisation of the gauge groups $\mathcal G^f(X)$ and $\mathcal G^f_*(X)$ as  $(\mathcal G^f(X))_{(p)}:=\Omega((B\mathcal G^f(X))_{(p)})$ and $(\mathcal G^f_*(X))_{(p)}:=\Omega((B\mathcal G^f_*(X))_{(p)}).$

We specialise to gauge groups of principal $G$-bundles over $M_{l,m}$, where $M_{l,m}$ is the total space of a sphere bundle with classifying map $l\rho+m\sigma\in\pi_3(SO(4))$, $l,m\in\mathbb Z$, and $G$ is a simply connected simple compact Lie group with $\pi_6(G)\cong0$. By Corollary \ref{c:bundles} there are set isomorphisms $$Prin_G(M_{l,m})\xrightarrow{\cong}[M_{l,m},BG]\xrightarrow{\cong}\mathbb Z_m.$$ 

Let $\mathcal G_k(M_{l,m})$ be the gauge group of the principal $G$-bundle over $M_{l,m}$ classified by  $k\in\mathbb Z_m=[M_{l,m},BG]$. By Proposition \ref{bundles}, the projection $\pi: M_{l,m} \to S^4$ induces a map $\pi^*:[S^4,BG]\to [M_{l,m},BG]$ which is a bijection if $m=0$ and a surjection if $m\geq 2$.

\subsection{Torsion-free case}
 Let $M_{l,m}$ be a manifold with torsion-free homology and $M_{l,m}\not\simeq S^7$. Thus in this case $m=0$ and the bundle $M_{l,0}\to S^4$ has a cross section.  The following lemma will be crucial to identify the spaces that appear in the homotopy decompositions of the gauge groups.
\begin{lemma}\label{l:diagram}
There is a homotopy commutative diagram of cofibrations
\begin{equation}\label{d:deltadiag}
\xymatrix{
{*}\ar[r]\ar[d]&S^7\ar@{=}[r]\ar@{^{(}->}[d]&S^7\ar[d]^{\gamma}\ar[r]&{*}\ar[d]\\
M_{l,0}\ar@{=}[d]\ar[r]^-{}&S^4\vee S^7\ar[r]^-{\delta}\ar[d]^{{pinch}}&S^4\ar[r]^{}\ar[d]^{}&\Sigma M_{l,0}\ar@{=}[d]\\
M_{l,0}\ar[r]^{\pi}&S^4\ar[r]^-{}&C_{l,0}\ar[r]^-{}&\Sigma M_{l,0}
}
\end{equation}
which defines the space $C_{l,0}$ and the maps $\gamma$ and $\delta$. Furthermore
there is a homotopy equivalence
$C_{l,0} \xrightarrow\simeq{\Sigma Y_{l}}.$
\end{lemma}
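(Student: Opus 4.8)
The plan is to build the diagram \eqref{d:deltadiag} one row at a time, starting from the standard cofibration data associated to the $S^3$-bundle $M_{l,0}\to S^4$, and then to identify $C_{l,0}$ using Proposition \ref{p:susp}. First I would recall from Lemma \ref{quotient} the cofibration $M_{l,0}\xrightarrow{q}D_{l,0}\xrightarrow{\simeq}S^4\vee S^7$ and the coaction/pinch data: the composite $M_{l,0}\to S^4\vee S^7\xrightarrow{pinch}S^4$ is homotopic to $\pi$. This furnishes the map labelled $M_{l,0}\to S^4\vee S^7$ in the middle row, together with a choice of connecting map $\delta\colon S^4\vee S^7\to\Sigma M_{l,0}$ so that the middle row is a cofibration sequence. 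The bottom row is then simply the cofibration sequence of $\pi\colon M_{l,0}\to S^4$, which defines $C_{l,0}$ as the homotopy cofibre of $\pi$; the vertical map $S^4\vee S^7\xrightarrow{pinch}S^4$ and the induced map $C_{l,0}\to\Sigma M_{l,0}$ make the lower square commute up to homotopy by naturality of homotopy cofibres, since $\pi\simeq(pinch)\circ q$ (up to the equivalence $D_{l,0}\simeq S^4\vee S^7$).

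Next I would construct the top two rows. The top row is the trivial cofibration $*\to S^7\xrightarrow{=}S^7\to *$, and the middle vertical maps $S^7\hookrightarrow S^4\vee S^7$ (inclusion of the second wedge summand) and $S^7\xrightarrow{\gamma}S^4$ are to be defined so that the upper-left square commutes (trivially, since its top-left corner is $*$) and the second-from-left square $S^7\hookrightarrow S^4\vee S^7\xrightarrow{pinch_1}S^4$ commutes — but that composite is null, hence $\gamma\simeq *$... so I must be careful: the vertical composite in column three should be $\delta$ restricted to $S^7$, i.e. $\gamma$ is $\delta|_{S^7}\colon S^7\to S^4$, the attaching map of the top cell up to suspension. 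Concretely, $\gamma$ represents the class in $\pi_7(S^4)$ detecting the top cell of $M_{l,0}$; by Proposition \ref{p:susp} and Lemma \ref{highbun} this is (a multiple of) $\Sigma\nu'$, namely $t_l\Sigma\nu'$. Then the third column $S^7\xrightarrow{\gamma=t_l\Sigma\nu'}S^4\to C_{l,0}\to\Sigma M_{l,0}$ must be (up to homotopy) a cofibration sequence, which forces $C_{l,0}$ to be the homotopy cofibre of $t_l\Sigma\nu'\colon S^7\to S^4$. That cofibre is precisely $\Sigma Y_l$: recall $Y_l$ is the homotopy cofibre of $t_l\nu'\colon S^6\to S^3$ (equivalently, of the composite $S^6\xrightarrow{\varphi}S^3\vee S^4\xrightarrow{p_1}S^3$), so $\Sigma Y_l$ is the homotopy cofibre of $\Sigma(t_l\nu')=t_l\Sigma\nu'\colon S^7\to S^4$. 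This gives the desired equivalence $C_{l,0}\xrightarrow{\simeq}\Sigma Y_l$.

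To assemble this rigorously I would proceed as follows. Start with the attaching-map cofibration $S^6\xrightarrow{\varphi}S^3\vee S^4\to M_{l,0}$ from \eqref{c:Mc}, where $\varphi=[\iota_3,\iota_4]+t_l\underline{\nu'}$. Suspending once and using $\Sigma[\iota_3,\iota_4]\simeq *$ gives $\Sigma\varphi\simeq j\circ t_l\Sigma\nu'$ with $j\colon S^4\hookrightarrow S^4\vee S^5$. The Puppe sequence of \eqref{c:Mc}, extended one step, reads $M_{l,0}\to S^4\vee S^7\xrightarrow{\delta}\Sigma(S^3\vee S^4)=S^4\vee S^5\xrightarrow{\Sigma(\text{incl})}\Sigma M_{l,0}$, and restricting the connecting map $\delta$ to the $S^7$ summand and then projecting off the $S^5$ summand recovers $t_l\Sigma\nu'\colon S^7\to S^4$ — this is the map $\gamma$. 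The commutativity of all four squares then follows from the naturality of the Puppe construction together with the compatibility between the coaction $\sigma$ and the pinch map established in the proof of Lemma \ref{quotient}. Finally, comparing the third-column cofibration $S^7\xrightarrow{\gamma}S^4\to C_{l,0}$ with the cofibration $S^7\xrightarrow{t_l\Sigma\nu'}S^4\to\Sigma Y_l$ (the suspension of $S^6\xrightarrow{t_l\nu'}S^3\to Y_l$) and invoking the uniqueness of homotopy cofibres yields $C_{l,0}\simeq\Sigma Y_l$.

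The main obstacle I anticipate is bookkeeping rather than deep input: one must pin down a single consistent choice of the equivalence $D_{l,0}\simeq S^4\vee S^7$, of the coaction $\sigma$, and of the connecting maps so that every square in the $4\times 3$ array commutes \emph{simultaneously} up to homotopy — in particular making sure the identification of $\gamma$ with $\delta|_{S^7}$ is the same identification used in both the middle and bottom rows. The honest work is tracking the Puppe/coaction data through Lemma \ref{quotient}; once that is fixed, the identification $C_{l,0}\simeq\Sigma Y_l$ is immediate from the definition of $Y_l$ and the fact that suspension is a functor and commutes (up to homotopy) with taking cofibres.
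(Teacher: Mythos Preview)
Your overall strategy is sound---build the $3\times 4$ diagram from the cofibration data of Lemma~\ref{quotient}, then read off $C_{l,0}\simeq\Sigma Y_l$ from the third column once $\gamma$ is identified---but your computation of $\gamma$ conflates two different Puppe sequences. The middle row of \eqref{d:deltadiag} is the Puppe sequence of the inclusion $i\colon S^3\to M_{l,0}$, so $\delta\colon S^4\vee S^7\to\Sigma S^3=S^4$; by contrast the Puppe sequence of the attaching-map cofibration \eqref{c:Mc} reads $M_{l,0}\to S^7\xrightarrow{\Sigma\varphi}S^4\vee S^5$. Your sentence ``The Puppe sequence of \eqref{c:Mc}, extended one step, reads $M_{l,0}\to S^4\vee S^7\xrightarrow{\delta}\Sigma(S^3\vee S^4)$'' mixes the two: the step after $M_{l,0}$ in the first sequence is $S^4\vee S^7$, in the second it is $S^7$, and the target of $\delta$ is $S^4$, not $S^4\vee S^5$. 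In particular there is no $S^5$ summand to project off, so as written the identification $\gamma=t_l\Sigma\nu'$ is unjustified.

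The claim is nevertheless correct, and here is a clean fix. Since $p_1\circ\varphi=t_l\nu'$ (with $p_1\colon S^3\vee S^4\to S^3$ the pinch and $\varphi=[\iota_3,\iota_4]+t_l\underline{\nu'}$), there is a map of attaching-map cofibrations inducing $f\colon M_{l,0}\to Y_l$ which restricts to $p_1$ on $4$-skeleta and hence to the identity on bottom cells. Passing to the Puppe sequences of the bottom-cell inclusions gives a ladder from $S^3\to M_{l,0}\to S^4\vee S^7\xrightarrow{\delta}S^4$ to $S^3\to Y_l\to S^7\xrightarrow{\delta'}S^4$ in which the third vertical map is the pinch $S^4\vee S^7\to S^7$ and $\delta'=\pm t_l\Sigma\nu'$; commutativity of the last square then gives $\gamma=\delta|_{S^7}=\pm t_l\Sigma\nu'$, hence $C_{l,0}\simeq\Sigma Y_l$. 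The paper takes a different route: it does not compute $\gamma$ at all, but uses the section $s\colon S^4\to M_{l,0}$ to split $\Sigma M_{l,0}\simeq S^5\vee C_{l,0}$ and then compares this with the splitting $\Sigma M_{l,0}\simeq S^5\vee\Sigma Y_l$ of Proposition~\ref{p:susp} via an auxiliary diagram of cofibrations built from $\Sigma s$ and $\Sigma\pi$. Your approach, once repaired as above, is more direct; the paper's has the advantage that its pattern (split off a summand of $\Sigma M$, identify the complement) is what generalises to the torsion case in Lemma~\ref{l:diagram2}, where $\gamma$ is never computed integrally.
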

\begin{proof} 
In general, by Lemma \ref{quotient} given a manifold $M_{l,m}$ there is a homotopy commutative diagram
\begin{equation}\label{csq}
\xymatrix{
S^3\ar[r]^-{i}&M_{l,m}\ar[r]^-{}\ar[d]_-{\pi}&S^4\vee S^7\ar[ld]^-{pinch}\\
&S^4&
}
\end{equation} 
Now let $m=0$. Using \eqref{csq} we can generate a homotopy commutative diagram as the one stated in the proposition, where each column and row is a cofibration sequence. Here $\gamma\in\pi_7(S^4)\cong\mathbb Z\times\mathbb Z_{12}$ is a map making the middle upper square of diagram \eqref{d:deltadiag} commute, which defines the space $C_{l,0}$. From the exact sequence induced by the top row in \eqref{csq}
\begin{equation}\label{s:coho}
\xymatrix{
H^4(S^4)\ar[r]^-{\delta^*}\ar@{=}[d]&H^4(S^4\vee S^7)\ar[r]^-{q^*}\ar@{=}[d]&H^4(M_{l,m})\ar@{=}[d]\ar[r]&0\\
\mathbb{Z}&\mathbb{Z}&\mathbb{Z}_m
}
\end{equation}
we conclude that $\delta$ restricted to $S^4$ is the degree $m$ map so that $\mathbb Z_0=\mathbb Z$. Let $s:S^4\to M_{l,0}$ be a section of the projection map $\pi$. Since $\pi\circ s= 1_{S^4}$, then $\Sigma\pi\circ\Sigma s\simeq 1_{S^5}$. Therefore the map $$\psi:S^5\vee C_{l,0}\xrightarrow{\Sigma s+b} \Sigma M_{l,0},$$ where $b$ is the connecting map of the cofibration induced by the projection $\pi$, is a homotopy equivalence. By Proposition \ref{p:susp} there is a homotopy equivalence
\begin{equation}\label{eq:sus}
\theta:\Sigma M_{l,0}\xrightarrow\simeq S^5\vee\Sigma Y_l,
\end{equation}
where $Y_l$ is the homotopy cofibre of a map $t_l\Sigma\nu'\in \pi_7(S^4)$ and $t_l$ depends linearly on $l$. Let $h_l=t_l\Sigma\nu'$. The suspension of the attaching map generates a homotopy commutative diagram of cofibrations
\begin{equation}
\xymatrix{
{*}\ar[r]\ar[d]&S^5\ar@{=}[r]\ar[d]&S^5\ar[d]^-{\Sigma s}\\
S^7\ar[r]^-{\Sigma\varphi}\ar@{=}[d]&S^5\vee S^4\ar[r]\ar[d]^-{pinch}&\Sigma M_{l,0} \ar[d]^-{c}\\
S^7\ar[r]^-{h_{l}}&S^4\ar[r]^-{}&\Sigma Y_{l}
}
\end{equation}
which defines the map $c$.
The homotopy commutative diagram of cofibrations
\begin{equation}
\xymatrix{
{*}\ar[r]\ar[d]&C_{l,0}\ar@{=}[r]\ar[d]^-{b}&C_{l,0}\ar[d]^-{}\\
S^5\ar[r]^-{\Sigma s}\ar@{=}[d]&\Sigma M_{l,0}\ar[r]^-c\ar[d]^-{\Sigma\pi}&\Sigma Y_l \ar[d]^-{}\\
S^5\ar[r]^-{\simeq}&S^5\ar[r]^-{}&{*}
}
\end{equation}
shows that there is a homotopy equivalence $C_{l,0}\xrightarrow\simeq \Sigma Y_{l,0}$.
 \end{proof}

By Proposition \ref{bundles}, the projection map induces a bijection between path components 
\begin{equation}
\pi^*:[S^4,BG]=\pi_0({\rm{Map}}_*(S^4,BG)) \rightarrow [M_{l,m},BG]=\pi_0({\rm{Map}}_*(M_{l,m},BG)).
\end{equation}
Moreover, the projection map $\pi$ induces the following fibration sequences

$${\rm{Map}}_*(\Sigma Y_l,BG)\to {\rm{Map}}_*(S^4,BG){\xrightarrow{\pi^*}}{\rm{Map}}_*(M_{l,0},BG),$$ $$F^k_{l,0}\to {\rm{Map}}_*^k(S^4,BG){\xrightarrow{\pi^{*}_k}}{\rm{Map}}_*^k(M_{l,0},BG),$$ where $\pi_k^{*}$ is the restriction of $\pi^*$ to the $k$-th component and $F^k_{l,0}$ is the corresponding homotopy fibre. Using the bottom row in the commutative diagram of Lemma \ref{l:diagram} we obtain the following fibration sequence for $k=0$ 
$${\rm{Map}}_*(\Sigma Y_l,BG)\to {\rm{Map}}_*^0(S^4,BG){\xrightarrow{\pi^*_0}}{\rm{Map}}_*^0(M_{l,0},BG),$$
where we can identify ${\rm{Map}}_*(\Sigma Y_l,BG)\simeq {\rm{Map}}_*(Y_l,G).$

Next we state a general result on the homotopy types of the spaces $F^k_{l,0}.$
\begin{lemma}\label{hofibre}
Let $G$ be a simply connected simple compact Lie group with $\pi_6(G)=0$. Let $F^k_{l,0}$ be the homotopy fibre of 
$\pi^*_{k}:{\rm{Map}}_*^k(S^4,BG) \rightarrow {\rm{Map}}^k_*(M_{l,0},BG).$ 
There are homotopy equivalences $$F^k_{l,0}\simeq{\rm{Map}}_*(Y_{l},G), \text{ for all }k\in \mathbb{Z}.$$
\end{lemma}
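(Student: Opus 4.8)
The plan is to analyze the homotopy fibre $F^k_{l,0}$ by comparing the fibration sequences induced by the projection $\pi$ across the different path components. The key structural input is Lemma \ref{l:diagram}, which identifies the homotopy cofibre $C_{l,0}$ of $\pi$ with $\Sigma Y_l$, and the fact (from Proposition \ref{bundles}) that $\pi^*$ induces a bijection on path components $\pi_0$. First I would recall that the cofibration $M_{l,0}\xrightarrow{\pi} S^4\to C_{l,0}\simeq \Sigma Y_l$ induces, upon applying ${\rm Map}_*(-,BG)$, a fibration sequence
$$
{\rm Map}_*(\Sigma Y_l, BG)\to {\rm Map}_*(S^4,BG)\xrightarrow{\pi^*}{\rm Map}_*(M_{l,0},BG),
$$
and that since $\pi^*$ is a bijection on components, the fibre over the basepoint component, namely ${\rm Map}_*(\Sigma Y_l,BG)\simeq {\rm Map}_*(Y_l,G)$, is (up to homotopy) the homotopy fibre $F^0_{l,0}$ of the restricted map $\pi^*_0$.

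The heart of the argument is to pass from the $k=0$ component to an arbitrary $k$. The standard device here is to use the action of the group ${\rm Map}_*(S^4,BG)$ — or rather of $\Omega{\rm Map}_*(S^4,BG)\simeq \mathcal{G}^k_*(S^4)$ and translation by a fixed element realising $k$ — to compare path components. More precisely, one chooses a map representing $k\in[S^4,BG]$; because $BG$ is a loop space (it is $B$ of the topological group $G$, so ${\rm Map}_*(X,BG)$ is a loop space under pointwise multiplication coming from the $H$-structure, or one uses the fibre-wise argument of Tsukuda/Theriault), left translation by a loop gives homotopy equivalences between the component over $0$ and the component over $k$ that are compatible with $\pi^*$. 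Concretely, the squares
$$
\xymatrix{
{\rm Map}^0_*(S^4,BG)\ar[r]^-{\pi^*_0}\ar[d]^-{\simeq}& {\rm Map}^0_*(M_{l,0},BG)\ar[d]^-{\simeq}\\
{\rm Map}^k_*(S^4,BG)\ar[r]^-{\pi^*_k}& {\rm Map}^k_*(M_{l,0},BG)
}
$$
commute up to homotopy, the vertical maps being the translations, which forces a homotopy equivalence of homotopy fibres $F^0_{l,0}\simeq F^k_{l,0}$. Combining this with the identification $F^0_{l,0}\simeq {\rm Map}_*(Y_l,G)$ from the previous paragraph gives the claim for all $k\in\mathbb{Z}$.

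I expect the main obstacle to be making the component-shifting argument precise: ${\rm Map}_*(S^4,BG)$ is not literally a topological group, so translating between components and checking that the translations are compatible with $\pi^*$ (so that the induced map of homotopy fibres is an equivalence) requires care. The clean way is to note that $\pi^*$ fits into the fibration sequence of the whole mapping spaces, take the long exact sequence of homotopy groups, observe that the fibre ${\rm Map}_*(\Sigma Y_l,BG)$ has $\pi_0$ acting trivially on the total space components (because $\pi^*$ is onto on $\pi_0$ and the relevant boundary maps vanish by $\pi_6(G)=0$ and the structure of $\Sigma Y_l$, which only has cells in dimensions $4$ and $8$), so that the homotopy fibre over every point of ${\rm Map}^k_*(M_{l,0},BG)$ is homotopy equivalent to the fibre over the basepoint, i.e. to ${\rm Map}_*(\Sigma Y_l,BG)\simeq {\rm Map}_*(Y_l,G)$. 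Once this equivalence of fibres-over-different-components is established, the lemma follows immediately; everything else is formal manipulation of the cofibration in Lemma \ref{l:diagram} and the adjunction ${\rm Map}_*(\Sigma Y_l, BG)\simeq {\rm Map}_*(Y_l,\Omega BG)\simeq {\rm Map}_*(Y_l,G)$.
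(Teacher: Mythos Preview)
Your overall strategy matches the paper's: identify $F^0_{l,0}\simeq{\rm Map}_*(Y_l,G)$ from the cofibration $M_{l,0}\xrightarrow{\pi}S^4\to C_{l,0}\simeq\Sigma Y_l$, and then transport to arbitrary $k$. The transport step, which you correctly flag as the main obstacle, is where your argument has a genuine gap, and neither of your proposed fixes works as stated.

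For the translation square: $BG$ is \emph{not} an $H$-space for the groups in question (for instance $BSU(2)=\mathbb{H}P^\infty$), so there is no pointwise product making ${\rm Map}_*(X,BG)$ a loop space. The equivalence $\theta_k:\Omega^4_0BG\to\Omega^4_kBG$ comes from the loop structure, i.e.\ from $S^4$ being a co-$H$-space; there is no analogous translation on ${\rm Map}_*(M_{l,0},BG)$ because $M_{l,0}$ is not a co-$H$-space, so the right-hand vertical equivalence in your square cannot be built this way. For the long-exact-sequence alternative: the $\pi_1$-action on the fibre (equivalently, the action of $\pi_0$ of the fibre on components of the total space) only compares fibres over points in the \emph{same} component of the base; it says nothing about fibres over different components, which is exactly what is at issue. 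Note also that $\pi^*$ is not a principal fibration, since its fibre ${\rm Map}_*(Y_l,G)$ is not a loop space, so that shortcut is closed too.

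The paper's resolution is to factor through the intermediate space $S^4\vee S^7$. By Lemma~\ref{quotient} the projection factors as $M_{l,0}\xrightarrow{q}S^4\vee S^7\xrightarrow{\text{pinch}}S^4$, so $\pi^*$ factors through $q^*:\Omega^4BG\times\Omega^7BG\to{\rm Map}_*(M_{l,0},BG)$. The point is that $q^*$ \emph{is} principal: it extends to the fibre sequence
\[
\Omega^4BG\longrightarrow\Omega^4BG\times\Omega^7BG\xrightarrow{\;q^*\;}{\rm Map}_*(M_{l,0},BG)\xrightarrow{\;i^*\;}\Omega^3BG
\]
coming from $S^3\hookrightarrow M_{l,0}$, and since $\Omega^3BG\simeq\Omega^2G$ is connected the homotopy fibre of $q^*$ over \emph{every} component of the base is $\Omega^4BG$. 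The translation $\theta_k\times\mathbbm{1}$ on the product of loop spaces is now legitimate and is trivially compatible with the inclusion $\Omega^4BG\hookrightarrow\Omega^4BG\times\Omega^7BG$. Feeding this into the $3\times3$ diagram of fibrations obtained from Lemma~\ref{l:diagram} identifies the homotopy fibre of $\pi^*_k$ with that of $\gamma^*:\Omega^4BG\to\Omega^7BG$, namely ${\rm Map}_*(\Sigma Y_l,BG)\simeq{\rm Map}_*(Y_l,G)$. Routing your argument through $S^4\vee S^7$ in this way closes the gap; the missing ingredient is precisely this factorization, which places the translation on a space where it makes sense.
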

\begin{proof}
The inclusion of the bottom cell into $M_{l,m}$ induces a fibration sequence 
\begin{equation}\label{f:mapincs3}
{\rm Map}_*(S^4\vee S^7,BG)\xrightarrow{q^*}{\rm Map}_*(M_{l,0},BG)\xrightarrow{i^*}{\rm Map}_*(S^3,BG)
\end{equation}
Applying the functor Map$_*(-,BG)$ to the diagram in Lemma \ref{l:diagram}, we can fit the fibration sequence \eqref{f:mapincs3} into a homotopy commutative diagram
\begin{equation}\label{d:fib1}
\xymatrix{
{\rm Map}_*(\Sigma Y_{l},BG)\ar[r] \ar[d] & {\rm Map}_*(S^4,BG) \ar[r]^-{\gamma^*} \ar[d]_-{(0+\gamma)^*} & {\rm Map}_*(S^7,BG) \ar@{=}[d] \\
{\rm Map}_*(S^4,BG) \ar[r]^-{i^*} \ar[d]_-{\pi^*} & {\rm Map}_*(S^4,BG) \times {\rm Map}_*(S^7,BG)\ar[r]^-{p_2^*} \ar[d]_-{q^*} & {\rm Map}_*(S^7,BG) \ar[d] \\
{\rm Map}_*(M_{l,0},BG) \ar@{=}[r] & {\rm Map}_*(M_{l,0},BG)\ar[d]_{i^*} \ar[r] & \ast\\
&{\rm Map}_*(S^3,BG)
}
\end{equation}
where rows and columns are fibrations. Here we have identified ${\rm{Map}}_*(S^4\vee S^7,BG)$ with $${\rm{Map}}_*(S^4,BG)\times{\rm{Map}}_*(S^7,BG),$$ so that $p_2^*$ is the projection and $i^*$ is the inclusion. Notice that the map $q^*$ induces a bijection between path components as $i^*$ does since $\pi_6(G)=0$.  For every $k\in \mathbb Z$ there is a homotopy equivalence between the path components $\theta_k:\Omega^4_0BG\to\Omega^4_kBG$ defined by $\omega\mapsto\mu\circ(\omega\times k_0)\circ\Delta,$ where $\mu$ is a homotopy multiplication in $\Omega^4BG,$ $\Delta$ is the diagonal map and $k_0$ is a choice of base point in $\Omega_k^4BG.$ 
Thus there is a homotopy commutative diagram
\begin{equation}\label{d:fib2}
\xymatrix{
{\rm Map}_*(\Sigma Y_{l},BG)\ar[r] \ar[d] & \Omega^4BG \ar[r]^-{\gamma^*} \ar[d]_-{ (0+\gamma)^*} &  \Omega^7BG \ar@{=}[d] \\
\Omega^4_0BG \ar[r]^-{i^*} \ar[d]_-{\theta_k} & \Omega^4_0BG \times\Omega^7BG \ar[r]^-{{p_2}^*} \ar[d]_-{\theta_k\times\mathbbm 1} &\Omega^7BG \ar@{=}[d] \\
\Omega^4_kBG \ar[r]^-{i^*}  & \Omega^4_kBG \times\Omega^7BG \ar[r]^-{{p_2}^*}
&\Omega^7BG 
}
\end{equation}
Therefore, by the homotopy commutativity of \eqref{d:fib2}, the restriction of $q^*$ in \eqref{d:fib1} to the $k$-th component
generates a homotopy commutative diagram 
\begin{equation}
\xymatrix{
{\rm Map}_*(\Sigma Y_{l},BG)\ar[r] \ar[d] & \Omega^4BG \ar[r]^-{\gamma^*} \ar[d]_-{(\theta_k\times\mathbbm1) \circ (0+\gamma)^*} &  \Omega^7BG \ar@{=}[d] \\
\Omega^4_kBG \ar[r]^-{i^*} \ar[d]_-{\pi^*} & \Omega^4_kBG \times\Omega^7BG \ar[r]^-{{p_2}^*} \ar[d]_-{q^*} &\Omega^7BG \ar[d] \\
{\rm Map}_*^k(M_{l,m},BG) \ar@{=}[r] & {\rm Map}_*^k(M_{l,m},BG) \ar[r]& \ast
}
\end{equation}
where each row and column is a fibration sequence. This shows that there are homotopy equivalences
$$F^k_{l,0} \simeq {\rm{Map}}_*(\Sigma Y_l,BG)\simeq{\rm{Map}}_*(Y_l,\Omega BG)\simeq{\rm{Map}}_*(Y_l,G)$$
for all $k\in\mathbb Z$.
\end{proof}

If  $Y$ is an $H$-group, or if $X$ is a co-$H$-group, then all the path components of Map$_*(X,Y)$ are homotopy equivalent. So for instance, if  $m=1$, then $M_{l,1}\simeq S^7.$ In this case for any  $k,k'\in[M_{l,1},BG]$, the path components ${\rm{Map}}^{k}_*(M_{l,1},BG)$ and ${\rm{Map}}^{k'}_*(M_{l,1},BG)$ are homotopy equivalent and, as a consequence, so are the pointed gauge groups. When $M_{l,m}$ is not homotopy equivalent to $S^7$, it is not known if the path components of ${\rm{Map}}_*(M_{l,m},BG)$ have the same homotopy type. We prove a result on the homotopy types of the pointed gauge groups over manifolds $M_{l,m}$ with torsion-free homology and $m=0$.

\begin{theorem}\label{t:PGG}
Let $G$ be a simply connected simple compact Lie group with $\pi_6(G)\cong 0$. Let $M_{l,0}$ be the total space of an $S^3$-bundle over $S^4$ with a cross section. Then for all $k\in \mathbb{Z}$
there is a homotopy equivalence $$\mathcal{G}_*^k(M_{l,0})\simeq_{} \Omega^4G\times {\rm{Map}}_*(Y_l,G).$$  
In particular, if $l\equiv 0\pmod{12}$ then there is a homotopy equivalence
$$\mathcal{G}^k_*(M_{l,0})\simeq\Omega^4G\times\Omega^3G\times \Omega^7G.$$
\end{theorem}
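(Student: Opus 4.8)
The plan is to realize $\mathcal{G}_*^k(M_{l,0}) \simeq \Omega{\rm{Map}}_*^k(M_{l,0}, BG)$ via the fibration sequence coming from the projection $\pi: M_{l,0} \to S^4$ and the splitting identified in Lemma \ref{l:diagram}. Recall from Lemma \ref{hofibre} that the homotopy fibre $F^k_{l,0}$ of $\pi^*_k: {\rm{Map}}_*^k(S^4, BG) \to {\rm{Map}}_*^k(M_{l,0}, BG)$ is homotopy equivalent to ${\rm{Map}}_*(Y_l, G)$. So I would start with the fibration sequence
\begin{equation}\label{d:PGGfib}
{\rm{Map}}_*(Y_l, G) \xrightarrow{j} {\rm{Map}}_*^k(S^4, BG) \xrightarrow{\pi^*_k} {\rm{Map}}_*^k(M_{l,0}, BG)
\end{equation}
and try to show it splits after looping, i.e. that $\pi^*_k$ has a section up to homotopy, or equivalently that the connecting map $\Omega{\rm{Map}}_*^k(M_{l,0},BG) \to {\rm{Map}}_*(Y_l,G)$ is null-homotopic.

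The key step is to produce a left homotopy inverse to $\pi^*_k$, or rather to exploit the section $s: S^4 \to M_{l,0}$ of $\pi$. Since $\pi \circ s \simeq 1_{S^4}$, applying ${\rm{Map}}_*(-, BG)$ gives $\pi^*_k \circ s^* \simeq$ (up to reindexing components) the identity on ${\rm{Map}}_*^k(S^4,BG)$... but this is backwards: $s^*$ goes ${\rm{Map}}_*(M_{l,0},BG) \to {\rm{Map}}_*(S^4,BG)$, so $s^*$ is a retraction of $\pi^*$, not a section. Instead, the right way is to note that the fibration \eqref{d:PGGfib} admits a section because $\pi$ has a section: the composite ${\rm{Map}}_*^k(M_{l,0},BG) \xrightarrow{s^*} {\rm{Map}}_*^k(S^4,BG)$ satisfies $s^* \circ \pi^*_k \simeq 1$, so ${\rm{Map}}_*^k(S^4,BG)$ splits as a product ${\rm{Map}}_*^k(M_{l,0},BG) \times ({\rm{fibre\ of\ }} s^*)$. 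Since $\pi^*_k$ is a split surjection with $s^*$ as splitting, its fibre ${\rm{Map}}_*(Y_l,G)$ and its base ${\rm{Map}}_*^k(M_{l,0},BG)$ together give ${\rm{Map}}_*^k(S^4,BG) \simeq {\rm{Map}}_*^k(M_{l,0},BG) \times {\rm{Map}}_*(Y_l,G)$. Looping this and using $\mathcal{G}^k_*(S^4) \simeq \Omega{\rm{Map}}_*^k(S^4,BG)$ together with the known decomposition $\mathcal{G}_*^k(S^4) \simeq \Omega^4_k BG \simeq \Omega^4 G$ (all path components of $\Omega^4 BG$ being homotopy equivalent, as noted via the maps $\theta_k$ in the proof of Lemma \ref{hofibre}), I obtain
$$\mathcal{G}_*^k(M_{l,0}) \simeq \Omega^4 G \times {\rm{Map}}_*(Y_l, G).$$

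For the final statement, I would specialize to $l \equiv 0 \pmod{12}$. By Proposition \ref{p:susp} and Lemma \ref{l:diagram}, in this case $Y_l \simeq Y_{0,0} \simeq S^7 \vee S^3$ (since $\Sigma Y_l \simeq S^8 \vee S^4$ and $Y_l$ is a two-cell complex with cells in dimensions $3$ and $7$ whose attaching map $t_l\nu'$ is null when $12 \mid l$). Hence ${\rm{Map}}_*(Y_l, G) \simeq {\rm{Map}}_*(S^3, G) \times {\rm{Map}}_*(S^7, G) = \Omega^3 G \times \Omega^7 G$, giving $\mathcal{G}^k_*(M_{l,0}) \simeq \Omega^4 G \times \Omega^3 G \times \Omega^7 G$.

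The main obstacle is being careful about which map is the section and which is the retraction in \eqref{d:PGGfib}, and confirming that a fibration whose total space retracts onto the base actually splits as a product of base and fibre — this needs the fibration to be principal-like or the retraction to interact well with the fibre inclusion, which here is guaranteed because $s^*$ is induced by a genuine map of spaces and $\Omega BG$ is a loop space (so everything is an $H$-space and the relevant extension problem is governed by an $H$-space, making the splitting automatic). I also need to double-check that $s^*$ maps the $k$-th component to the $k$-th component, which follows since $\pi^*$ is a bijection on $\pi_0$ (Proposition \ref{bundles}) and $s^*$ is its one-sided inverse there.
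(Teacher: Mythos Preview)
Your approach is the same as the paper's, but the execution has a genuine direction error. From $\pi\circ s=1_{S^4}$ you correctly derive $s^*\circ\pi^*_k=1$ on ${\rm Map}_*^k(S^4,BG)$. This makes $s^*$ a \emph{left} inverse of $\pi^*_k$, i.e.\ $\pi^*_k$ is split injective; it does \emph{not} make $\pi^*_k$ a split surjection. Consequently your claimed splitting
\[
{\rm Map}_*^k(S^4,BG)\ \simeq\ {\rm Map}_*^k(M_{l,0},BG)\times{\rm Map}_*(Y_l,G)
\]
is false (it has the factors on the wrong side: already for $l=0$ the right-hand side has far larger homotopy groups than the left). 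Looping this wrong splitting would give $\Omega^4G\simeq\mathcal G_*^k(M_{l,0})\times\Omega{\rm Map}_*(Y_l,G)$, which is not the theorem; so your final conclusion does not actually follow from what you wrote.

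The correct argument, and what the paper does, is to rotate the fibration once to the left:
\[
\Omega{\rm Map}_*^k(S^4,BG)\xrightarrow{\ \Omega\pi^*_k\ }\Omega{\rm Map}_*^k(M_{l,0},BG)\xrightarrow{\ \delta\ }{\rm Map}_*(Y_l,G).
\]
Here $\Omega\pi^*_k$ is the fibre inclusion, and it has the retraction $\Omega s^*_k$ (again from $s^*\circ\pi^*=1$). Hence the map
\[
(\Omega s^*_k,\ \delta):\ \Omega{\rm Map}_*^k(M_{l,0},BG)\longrightarrow\Omega{\rm Map}_*^k(S^4,BG)\times{\rm Map}_*(Y_l,G)
\]
is a weak equivalence (it splits the long exact sequence), yielding $\mathcal G_*^k(M_{l,0})\simeq\Omega^4G\times{\rm Map}_*(Y_l,G)$. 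Your treatment of the case $l\equiv 0\pmod{12}$ and the check that $s^*$ respects the $k$-th components are fine.
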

 \begin{proof}
Let $M_{l,0}$ be a manifold with a cross section. Let $\mathcal{G}^k_*(M_{l,0})$ be the gauge group of the principal $G$-bundle over $M_{l,m}$ classified by $k\in\mathbb{Z}.$ 
By Lemma \ref{hofibre} there is a fibration sequence
\begin{equation}\label{eq:fibpcom}
{\rm{Map}}_{*}( Y_{l},G)\to {\rm{Map}}_*^k(S^4,BG){\xrightarrow{\pi^*_k}}{\rm{Map}}_*^k(M_{l,m},BG).
\end{equation}
Extend the fibration sequence to the left. Consider the following part of the fibration
\begin{equation}\label{f:ls3inc}
 \xymatrix{
\Omega {\rm{Map}}_*^0(S^4,BG)\simeq\Omega {\rm{Map}}_*^k(S^4,BG)\ar[r]^-{\Omega\pi_k^{*}}&\Omega{\rm{Map}}_*^k(M_{l,m},BG)\ar[r]&{\rm{Map}}_*(Y_{l},G).
}\end{equation}
Let $s:M_{l,m}\to S^4$ be a cross section, that is, a map such that  the diagram 
\begin{equation}\label{d:section}
\xymatrix{
S^4\ar[r]^s\ar@{=}[rd]&M_{l,m}\ar[d]^\pi\\
&S^4
}
\end{equation}
commutes. Applying the functor ${\rm{Map}}^*(-,BG)$ to the diagram \eqref{d:section} we obtain the following homotopy commutative diagram
\begin{equation}\label{d:indsection}
\xymatrix{
{\rm{Map}}_*(S^4,BG)\ar@{=}[rd]&{\rm{Map}}_*(M_{l,m},BG)\ar_{s^{*}}[l]\\
&{\rm{Map}}_*(S^4,BG)\ar[u]_{\pi^*}.
}
\end{equation}
Take $k$-th components to get a similar diagram, so $s^*_k$ is a right homotopy inverse of $\pi^*_k$. Thus after looping there is a homotopy equivalence
$$\Omega {\rm{Map}}^k_*(M_{l,0},BG)\simeq\Omega{\rm{Map}}_*^0(S^4,BG)\times{\rm{Map}}_*(Y_l,G).$$
We can identify $\Omega {\rm{Map}}_*^0(S^4,BG)\simeq\Omega^4 G$ and $\mathcal{G}_{*}^k(M_{l,m})\simeq \Omega{\rm{Map}}_*^k(M_{l,m},BG)$. Putting things together we obtain $$\mathcal{G}^k_*(M_{l,0})\simeq\Omega^4G\times{\rm{Map}}_*(Y_l,G).$$ Finally, by Proposition \ref{p:susp} when $l\equiv 0\pmod{12}$ we have $\Sigma Y_l\simeq S^4\vee S^8$ and therefore we get
\[
\mathcal{G}^k_*(M_{l,0})\simeq\Omega^4G\times\Omega^3G\times \Omega^7G.\qedhere
\]
 \end{proof}

We can use Theorem \ref{t:PGG} to compute homotopy groups of the pointed gauge groups.

\begin{cor}\label{c:pgghg}
For all $k\in\mathbb Z$ and for all $n\geq0$ there are isomorphisms
$$\pi_n(\mathcal G^k_*(M_{l,0}))\cong\pi_{n+4}(G)\oplus\pi_n({\rm{Map}}_*(Y_l,G)).$$
Further, if $l\equiv 0\pmod {12}$ then 
$$\pi_n(\mathcal G^k_*(M_{l,0}))\cong\pi_{n+4}(G)\oplus\pi_{n+3}(G)\oplus\pi_{n+7}(G).$$ 
\qed
\end{cor}
Corollary \ref{c:pgghg} shows that homotopy groups of $\mathcal G^k_*(M_{l,0})$ can be obtained using information of the homotopy groups of the structure group $G$ of the principal bundles.

Now we look at the evaluation map to obtain a homotopy decomposition of the unpointed gauge groups. The restriction of evaluation map to the $k$-th component defines a fibration sequence
\begin{equation} \label{e:starstar}
\Omega{\rm{Map}}^k(M_{l,m},BG)\longrightarrow G\overset{\partial^k}{\longrightarrow}{\rm{Map}}^k_*{(M_{l,m},BG)}\longrightarrow{\rm{Map}}^k(M_{l,m},BG)\overset{ev_k}{\longrightarrow}BG
\end{equation}
where $\partial^k$ is the connecting  map. Thus the gauge group $\mathcal{G}^k(M_{l,m})\simeq\Omega{\rm{Map}}^k(M_{l,m},BG)$ appears as the homotopy fibre of the connecting map $\partial^k$. Hence, it is expected that the properties of $\partial^k$ determine the homotopy type of the gauge groups over the manifolds $M_{l,m}$.

By Proposition \ref{bundles}, if $m=0$ the projection $M_{l,0}\xrightarrow{\pi}S^4$ induces an bijection between path components of Map$(M_{l,0},BG)$ and those of Map$(S^4,BG).$ Therefore, the evaluation map induces a commutative diagram
\begin{equation}\label{evdiag0}
\xymatrix{ 
G\ar[r]^{\phi_k\qquad}\ar@{=}[d]&{\rm{Map}}_{*}^k(S^4,BG)\ar[r]\ar[d]^{\pi_k^{*}}&
{\rm{Map}}^k(S^4,BG)\ar[r]^-{ev_k}\ar[d]^{\pi^{*}_k}&BG\ar@{=}[d]\\
G\ar[r]^-{\partial_k}&{\rm{Map}}_{*}^k(M_{l,0},BG)\ar[r]&{\rm{Map}}^k(M_{l,0},BG)
\ar[r]^-{ ev_k}
&BG
}
\end{equation}
which defines the map $\phi_k$. We write $\Omega {\rm{Map}}^k(S^4,BG)\simeq\mathcal{G}^k(S^4).$

\begin{proof}[Proof of Theorem \ref{t:UGGTF}]
We argue along the lines of \cite{Th1}. Consider the restriction of the map $$\pi^*:{\rm{Map}}_*(S^4,BG)\to{\rm{Map}}_*(M_{l,m},BG)$$ to the $k$-th component. By Lemma \ref{hofibre} there is a fibration sequence 
\begin{equation}\label{eq:fibresev}
\Omega {\rm{Map}}^k_*(M_{l,0},BG)\xrightarrow{\delta^*}{\rm{Map}}_{*}(Y_{l},G)\to {\rm{Map}}_*^k(S^4,BG){\xrightarrow{\pi^*_k}}{\rm{Map}}_*^k(M_{l,0},BG).
\end{equation}
We identify ${\rm{Map}}^k_*(M_{l,0},BG)\simeq\mathcal G_*^k(M_{l,m}).$ The left square in \eqref{evdiag0} along with \eqref{eq:fibresev} induce the following homotopy commutative diagram 
 \begin{equation}\label{gencd}
\xymatrix{
{*}\ar[r]\ar[d]&\mathcal G_*^k(M_{l,0})\ar@{=}[r]\ar[d]&\mathcal G_*^k(M_{l,0})\ar[d]^{\delta^*}\\
\mathcal{G}^k(S^4)\ar[r]^-{}\ar@{=}[d]&\mathcal{G}^k(M_{l,0})\ar[r]^-h\ar[d]&{\rm{Map}}_{*}(Y_{l},G)\ar[d]^{}\\
\mathcal{G}^k(S^4)\ar[r]^{}\ar[d]&G\ar[r]^-{\phi_k}\ar[d]^-{\partial_k}&{\rm{Map}}^k_{*}(S^4,BG)\ar[d]^{\pi^*_k}\\
{*}\ar[r]&{\rm{Map}}^{k}_{*}(M_{l,0},BG)\ar@{=}[r]&{\rm{Map}}^{k}_{*}(M_{l,0},BG)
}
\end{equation}
which defines the map $h$.

By Theorem \ref{t:PGG} the map $\delta^*$ has a right homotopy inverse which implies that the map $h$ also has a right homotopy inverse.
The group structure on $\mathcal{G}^k(M_{l,0})$ allows to multiply to obtain a composite
\begin{equation*}
\mathcal{G}^k({S}^4)\times{\rm{Map}}_*(\Sigma Y_l,BG)\rightarrow \mathcal{G}^k(M_{l,0})\times \mathcal{G}_k(M_{l,0})\rightarrow \mathcal{G}^k(M_{l,0}),
\end{equation*}
which is a homotopy equivalence.

If $l\equiv 0 \pmod{12}$ then by Lemma \ref{hofibre} $${\rm{Map}}_*(\Sigma Y_l,BG )\simeq \Omega^3G\times \Omega^7G.$$
as required.
\end{proof}

 \begin{remark}
 The evaluation fibration induces exact sequences of homotopy groups
\begin{equation}\label{e:hgev}
\cdots\rightarrow\pi_n(\mathcal G^k_*(M_{l,0}))\xrightarrow{i^*}\pi_n(\mathcal G^k(M_{l,0}))\xrightarrow{ev^*}\pi_n(G)\rightarrow\cdots
\end{equation}
for all $k\in\mathbb Z$. Given a simply connected simple compact Lie group we have $\pi_n(G)=0$ for $n\leq 2$, which implies that the map $i^{*}$ is an isomorphism for $n\leq2$. We can use these isomorphisms and Corollary  \ref{c:pgghg} to compute the path components of unpointed gauge groups $\mathcal G^k(M_{l,m})$. For example, for the manifolds $M_{l,0}$ such that $l\equiv0\pmod {12}$, we compute the path components of the gauge groups using information of the homotopy groups of Lie groups as given in \cite{Jam2}:
\begin{equation*}
\pi_0(\mathcal G^k(M_{l,0}))=\begin{cases}
\mathbb Z\times\mathbb Z\times\mathbb Z & G=Spin(8)\\
\mathbb Z\times\mathbb Z\times\mathbb Z_2 & G=Sp(n)(n\geq2),Spin(5)\\
\mathbb Z\times\mathbb Z & G=SU(n)(n\geq4), Spin(m)(m=6,7 \text{ or } m \geq 9)\\
\mathbb Z & G=F_4,E_6,E_7,E_8.
\end{cases}
\end{equation*}
\end{remark}

\subsection{Non-torsion-free case}
\label{s:torsion}

Now we focus on the case of gauge groups of principal $G$-bundles over manifolds $M_{l,m}$ for $m\geq 2$, which have non-torsion-free homology. To obtain homotopy decompositions it will be required that spaces are localised at a prime $p\geq 5.$ In this case we will obtain results for the loop spaces of the gauge groups, $\Omega \mathcal{G}^{ k}(M_{l,m}).$ As in the torsion-free case, the following lemma will be required to identify the spaces that appear in the homotopy decomposition of the gauge groups.

\begin{lemma}\label{l:diagram2}
There is a homotopy commutative diagram of cofibrations
\begin{equation}\label{d:deltadiag2}
\xymatrix{
{*}\ar[r]\ar[d]&S^7\ar@{=}[r]\ar@{^{(}->}[d]&S^7\ar[d]^{\gamma}\ar[r]&{*}\ar[d]\ar[r]&S^8\ar@{=}[r]\ar[d]&S^8\ar[d]^{\Sigma\gamma}\\
M_{l,m}\ar@{=}[d]\ar[r]^-{g}&S^4\vee S^7\ar[r]^-{\delta}\ar[d]^{pinch}&S^4\ar[r]^{}\ar[d]^{}&\Sigma M_{l,m}\ar@{=}[d]\ar[r]&S^5\vee S^8\ar[r]^-{\Sigma \delta}\ar[d]&S^5\ar[d]\\
M_{l,m}\ar[r]^{\pi}&S^4\ar[r]^-{q}&C_{l,m}\ar[r]^-{}&\Sigma M_{l,m}\ar[r]&S^5\ar[r]^-{\Sigma q}&\Sigma C_{l,m}
}
\end{equation}
which defines the space $C_{l,0}$ and the maps $\gamma$ and $\delta$. Furthermore, the map $S^5\xrightarrow{\Sigma q}\Sigma C_{l,m}$ is identified with the composite $$S^5\xrightarrow{m} S^5\hookrightarrow \Sigma C_{l,m}$$ and after localisation at $p\geq 5$ there are homotopy equivalences
$$\Sigma C_{l,m} \simeq_{(p)}{S^5\vee S^9}.$$
\end{lemma}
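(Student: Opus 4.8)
The plan is to build the grid exactly as in the proof of Lemma~\ref{l:diagram}, and then to extract the two extra assertions by a short cohomology and homotopy-group computation. Lemma~\ref{quotient} provides the homotopy-commutative triangle $M_{l,m}\xrightarrow{g}S^4\vee S^7\xrightarrow{pinch}S^4$ with $pinch\circ g\simeq\pi$, where $g$ is the collapse $M_{l,m}\to D_{l,m}$ of the bottom cell followed by the equivalence $D_{l,m}\simeq S^4\vee S^7$. Since this collapse sits in the cofibration $S^3\xrightarrow{i}M_{l,m}\to D_{l,m}$, the homotopy cofibre of $g$ is $\Sigma S^3=S^4$; I write $\delta\colon S^4\vee S^7\to S^4$ for the connecting map. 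Feeding this triangle into the $3\times 3$ construction for homotopy cofibrations — the same one used for Lemma~\ref{l:diagram} — produces a homotopy-commutative grid all of whose rows and columns are cofibration sequences; it defines $\gamma\colon S^7\to S^4$ (the restriction of $\delta$ to the $S^7$-summand) together with $C_{l,m}$, which is simultaneously the homotopy cofibre of $\gamma$ and of $\pi$. In particular $C_{l,m}\simeq S^4\cup_\gamma e^8$, and the middle map $j\colon S^4\hookrightarrow C_{l,m}$ of the $\gamma$-column is the inclusion of the bottom cell. Suspending once and continuing each Puppe sequence one further step to the right fills in the remaining three columns and produces diagram~\eqref{d:deltadiag2}.

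The hypothesis $m\ge 2$ enters only through the identification of $\delta$ on the $S^4$-summand. Applying cohomology to the cofibration $S^3\xrightarrow{i}M_{l,m}\to D_{l,m}$ and using $\tilde H^4(M_{l,m})\cong\mathbb Z_m$ (Lemma~\ref{spect}), the long exact sequence reduces to $0\to\mathbb Z\xrightarrow{\delta^*}\mathbb Z\to\mathbb Z_m\to 0$; hence $\delta^*$ is multiplication by $m$ and the composite $S^4\hookrightarrow S^4\vee S^7\xrightarrow{\delta}S^4$ has degree $m$. Now the grid contains the commuting square $j\circ\delta\simeq q\circ pinch$, where $q\colon S^4\to C_{l,m}$ is the second map of the bottom row. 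Precomposing this square with the inclusion of the $S^4$-summand of $S^4\vee S^7$ — on which $\delta$ is a degree $m$ map while $pinch$ restricts to the identity — yields $q\simeq j\circ(\text{degree }m)$. Suspending, $\Sigma q$ is the composite $S^5\xrightarrow{m}S^5\xrightarrow{\Sigma j}\Sigma C_{l,m}$ with $\Sigma j$ the inclusion of the bottom cell, which is the first displayed identification.

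Finally, the last column of the grid exhibits $\Sigma C_{l,m}$ as the homotopy cofibre of $\Sigma\gamma\colon S^8\to S^5$, so $\Sigma C_{l,m}\simeq S^5\cup_{\Sigma\gamma}e^9$. By Toda's calculation of the stable $3$-stem we have $\pi_8(S^5)\cong\mathbb Z_{24}$, a finite group whose order $24=2^3\cdot 3$ is prime to every prime $p\ge 5$; therefore $\Sigma\gamma$ becomes null-homotopic once localised at such a $p$, the inclusion $S^5\hookrightarrow\Sigma C_{l,m}$ acquires a $p$-local retraction, and $\Sigma C_{l,m}\simeq_{(p)}S^5\vee S^9$.

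The only genuinely delicate ingredient is producing a coherent grid in which every row and column is a cofibration sequence; but this is carried out verbatim in the proof of Lemma~\ref{l:diagram}, so it may be quoted rather than repeated, and the remaining steps — the long exact sequence fixing $\delta|_{S^4}$, restricting one square to a wedge summand, and the appeal to $\pi_8(S^5)\cong\mathbb Z_{24}$ — are essentially formal.
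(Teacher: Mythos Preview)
Your proof is correct and follows essentially the same route as the paper's own argument: both build the $3\times 3$ grid from the triangle of Lemma~\ref{quotient} exactly as in Lemma~\ref{l:diagram}, read off the degree of $\delta|_{S^4}$ from the long exact sequence, and finish by observing that $\pi_8(S^5)$ vanishes $p$-locally for $p\geq 5$. Your treatment of the identification $q\simeq j\circ m$ (via the commuting square $j\circ\delta\simeq q\circ pinch$ restricted to the $S^4$-summand) is in fact more explicit than the paper's, which simply asserts the identification of $\Sigma q$ without spelling out this step.
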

\begin{proof}
 Arguing along the lines of Lemma \ref{l:diagram} there is a homotopy commutative diagram
 \begin{equation}\label{d:pinch2}
\xymatrix{
M_{l,m}\ar[r]^-{}\ar[d]_-{\pi}&S^4\vee S^7\ar[ld]^-{pinch}\\
S^4&
}
\end{equation}
that we can extend to obtain a homotopy commutative diagram as shown in \eqref{d:deltadiag2}. Thus $\delta=\beta+\gamma$ where $\beta\in\pi_4(S^4)$ and $\gamma\in\pi_7(S^4)\cong\mathbb{Z}\times\mathbb{Z}_{12}$. Using the long exact sequence that the middle row of \eqref{d:deltadiag2} induces in homology, we can see that $\beta$ is the degree $m$ map. Thus we can identify the map $\Sigma q$ with the composite
$$S^5\xrightarrow{m} S^5\hookrightarrow \Sigma C_{l,m}.$$ The homotopy group $\pi_8(S^5)$ becomes trivial after localisation at a prime $p\geq 5$ \cite{Tod}. Since $\Sigma C_{l,m}$ is the homotopy cofibre of the map $\Sigma\gamma\in\pi_8(S^5)$, after localisation at a prime $p\geq5$ there is a homotopy equivalence
\[
\Sigma C_{l,m}\simeq_{(p)} S^5\vee S^9. \qedhere
\]
\end{proof}

By Proposition \ref{bundles}, the projection map induces a surjection between path components 
\begin{equation}
\pi^*:[S^4,BG]=\pi_0({\rm{Map}}_*(S^4,BG)) \rightarrow [M_{l,m},BG]=\pi_0({\rm{Map}}_*(M_{l,m},BG)).
\end{equation}
Moreover, the projection map induces the following fibration sequences
$${\rm{Map}}_*(\Sigma C_{l,m},BG)\to {\rm{Map}}_*(S^4,BG){\xrightarrow{\pi^*}}{\rm{Map}}_*(M_{l,m},BG),$$ $$F^k_{l,m}\to {\rm{Map}}_*^{\bar k}(S^4,BG){\xrightarrow{\pi^{*}_k}}{\rm{Map}}_*^k(M_{l,m},BG),$$ where $\pi_k^*$ is the restriction of $\pi^*$ to the $k$-th component and $F^k_{l,m}$ is the corresponding homotopy fibre. 
More precisely, if $m\geq2$ then ${\rm{Map}}^{}(M_{l,m},BG)$ has $m$ components and $\pi^*$ sends the $\bar k$-th component of ${\rm{Map}}(S^4,BG)$ to the $k$-th component of ${\rm{Map}}(M_{l,m},BG),$ where $k$ is the reduction mod  $m$ of $\bar k.$

\begin{lemma}\label{hofibret}
After localisation at a prime $p\geq 5$, there is a fibration sequence
$$\Omega^9BG\times\Omega^5BG\xrightarrow {* \times m^*}\Omega^5BG \xrightarrow{\underline{\pi}^*_k} \Omega{\rm{Map}}_k^*(M_{l,m},BG),$$ 
where $m_*$ is the $m$-th power map, and the map $\underline{\pi}_*^k$ is identified with the composite
$$\Omega{\rm Map}_*^0(S^4,BG) \xrightarrow{\Omega\theta_k} \Omega{\rm Map}_*^k(S^4,BG) \xrightarrow{\Omega\pi^*_k} \Omega{\rm{Map}}^k_*(M_{l,m},BG),$$
where $\Omega\theta_k$ is a homotopy equivalence.
\end{lemma}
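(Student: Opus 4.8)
The plan is to obtain the asserted fibration sequence by looping the fibration
\[
F^k_{l,m}\to{\rm Map}^{\bar k}_*(S^4,BG)\xrightarrow{\pi^*_k}{\rm Map}^k_*(M_{l,m},BG)
\]
recorded just above, which comes from applying ${\rm Map}_*(-,BG)$ to the homotopy cofibration $M_{l,m}\xrightarrow{\pi}S^4\xrightarrow{q}C_{l,m}$ of Lemma \ref{l:diagram2}, and then identifying its terms and maps after localising at a prime $p\geq 5$. Looping produces a fibration sequence
\[
\Omega F^k_{l,m}\to\Omega{\rm Map}^{\bar k}_*(S^4,BG)\xrightarrow{\Omega\pi^*_k}\Omega{\rm Map}^k_*(M_{l,m},BG),
\]
so everything reduces to recognising $\Omega{\rm Map}^{\bar k}_*(S^4,BG)$, the looped fibre $\Omega F^k_{l,m}$, and the fibre inclusion between them.

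For the looped total space, note that ${\rm Map}_*(S^4,BG)=\Omega^4BG$ is a loop space, so all of its path components are homotopy equivalent; looping the translation equivalence $\theta_k\colon{\rm Map}^0_*(S^4,BG)\xrightarrow{\simeq}{\rm Map}^{\bar k}_*(S^4,BG)$ used in the proof of Lemma \ref{hofibre} gives a homotopy equivalence $\Omega\theta_k$ with domain $\Omega{\rm Map}^0_*(S^4,BG)=\Omega(\Omega^4_0BG)=\Omega^5BG$. Precomposing $\Omega\pi^*_k$ with $\Omega\theta_k$ produces exactly the map $\underline{\pi}^{*}_k$ of the statement.

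For the looped fibre, the fibre of $\pi^*$ over the constant map is ${\rm Map}_*(C_{l,m},BG)$, and by the exponential law $\Omega{\rm Map}_*(C_{l,m},BG)\simeq{\rm Map}_*(\Sigma C_{l,m},BG)={\rm Map}_*(C_{l,m},\Omega BG)$; by Lemma \ref{l:diagram2} there is a $p$-local equivalence $\Sigma C_{l,m}\simeq_{(p)}S^5\vee S^9$, so $\Omega F^k_{l,m}\simeq_{(p)}{\rm Map}_*(S^5\vee S^9,BG)=\Omega^9BG\times\Omega^5BG$ (independently of $k$; see below). The fibre inclusion $\Omega F^k_{l,m}\to\Omega{\rm Map}^{\bar k}_*(S^4,BG)$ is $\Omega(q^*)=(\Sigma q)^*$, and by Lemma \ref{l:diagram2} the map $\Sigma q\colon S^5\to\Sigma C_{l,m}$ is homotopic to $S^5\xrightarrow{m}S^5\hookrightarrow\Sigma C_{l,m}$, whose second arrow is, $p$-locally, the inclusion of the bottom-cell wedge summand $S^5\hookrightarrow S^5\vee S^9$. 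Hence $(\Sigma q)^*$ is the projection $\Omega^9BG\times\Omega^5BG\to\Omega^5BG$ followed by precomposition with a degree-$m$ self-map of $S^5$, which is the $m$-th power map $m_*$ on $\Omega^5BG={\rm Map}_*(S^5,BG)$; after reordering the two wedge summands this is $*\times m_*$. Feeding these identifications into the looped fibration sequence gives the statement.

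The step I expect to be the main obstacle is the bookkeeping with path components and the basepoint dependence of $\Omega$: one must be sure the looped fibration genuinely sits over $\Omega{\rm Map}^k_*(M_{l,m},BG)$ with fibre $\Omega^9BG\times\Omega^5BG$ and fibre inclusion $*\times m_*$, and not merely the analogous statement over the $0$-th component, \emph{even though} the path components of ${\rm Map}_*(M_{l,m},BG)$, and hence a priori the fibres $F^k_{l,m}$, need not all be homotopy equivalent. This is handled just as in the proof of Lemma \ref{hofibre}, using the translation self-equivalences of $\Omega^4BG$, together with the hypothesis $p\geq 5$: the latter forces $\pi_4(G)$ and $\pi_8(G)$ to be $p$-locally trivial, so that ${\rm Map}_*(\Sigma C_{l,m},BG)\simeq_{(p)}\Omega^9BG\times\Omega^5BG$ is $p$-locally connected and the identification of $\Omega F^k_{l,m}$ is independent of $k$.
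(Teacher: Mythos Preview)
Your proof is correct and follows essentially the same route as the paper: both apply ${\rm Map}_*(-,BG)$ to the cofibration diagram of Lemma~\ref{l:diagram2}, use the translation equivalence $\theta_k$ on $\Omega^4BG$ to handle the path-component bookkeeping, loop, and then identify the looped fibre and fibre inclusion via $\Sigma C_{l,m}\simeq_{(p)}S^5\vee S^9$ and $\Sigma q\simeq(\mathrm{incl})\circ m$ from Lemma~\ref{l:diagram2}; the paper just spells out the $3\times 3$ diagram explicitly rather than deferring to ``as in Lemma~\ref{hofibre}''. One small caveat: your closing assertion that $\pi_8(G)$ is $p$-locally trivial for $p\geq 5$ is unnecessary for the argument (the translation method already handles the component issue) and is not something the paper uses or justifies, so you should drop it rather than leave it as an unchecked claim.
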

\begin{proof}
Applying the functor Map$_*(-,BG)$ to the diagram in Lemma \ref{d:deltadiag2}, we obtain a homotopy commutative diagram of fibrations
\begin{equation}
\xymatrix{
{\rm Map}_*(C_{l,m},BG)\ar[r] \ar[d]_-{q^*} & {\rm Map}_*(S^4,BG) \ar[r]^-{\gamma^*} \ar[d]_-{(m+\gamma)^*} & {\rm Map}_*(S^7,BG) \ar@{=}[d] \\
{\rm Map}_*(S^4,BG) \ar[r]^-{} \ar[d]_-{\pi^*} & {\rm Map}_*(S^4 \vee S^7,BG) \ar[r]^-{{p_2}^*} \ar[d]_-{g^*} & {\rm Map}_*(S^7,BG) \ar[d] \\
{\rm Map}_*(M_{l,m},BG) \ar@{=}[r] & {\rm Map}_*(M_{l,m},BG)\ar[d]_-{i^*} \ar[r] & \ast\\
&{\rm Map}_*(S^3,BG).
}
\end{equation}
We can identify ${\rm{Map}}_*(S^4\vee S^7,BG)$ with ${\rm{Map}}_*(S^4,BG)\times{\rm{Map}}_*(S^7,BG)$. The following diagram is obtained by restricting the map $i^*$ to the $k$-th path component and composing with the map $$\theta_k\times \mathbbm1:\Omega^4_0BG\times\Omega^7BG\to\Omega^4_kBG\times\Omega^7BG,$$ where $\theta_k$ is the homotopy equivalence given by $\omega\mapsto\mu\circ(\omega\times k_0)\circ\Delta$ for fixed $k_0 \in \Omega^4_kBG$, and $\mathbbm 1$ is the identity map on $\Omega^7BG$:
\begin{equation}
\xymatrix{
{\rm{Map}}_*(C_{l,m},BG)\ar[r] \ar[d] & \Omega^4BG \ar[r]^-{\gamma^*} \ar[d]_-{(\theta_k\times\mathbbm1) \circ( m+\gamma)^*} &  \Omega^7BG \ar@{=}[d] \\
 \bigsqcup_{i\in\mathbb{Z}}\Omega^4_{im+k}BG \ar[r]^-{} \ar[d]_-{\pi^*_k} & (\bigsqcup_{i\in\mathbb{Z}}\Omega^4_{im+k}BG )\times\Omega^7BG \ar[r]^-{{p_2}^*} \ar[d]_-{g^*_k} &\Omega^7BG \ar[d] \\
{\rm Map}_*^k(M_{l,m},BG) \ar@{=}[r] & {\rm Map}_*^k(M_{l,m},BG) \ar[r]& \ast
}
\end{equation}
Here all rows and the middle and right columns, and hence the left column, are fibrations. Note that since the projection map $g^*:{\rm{Map}}_*(S^4,BG)\to{\rm{Map}}_*(M_{l,m},BG)$ induces a surjection in path components, the homotopy fibre of $i^*$ restricted to the $k$-th component is not path connected. Choose a basepoint $\tilde k:M_{l,0}\to BG$ in $\bigsqcup_{i\in\mathbb{Z}}\Omega^4_{im+k}BG$. Then after looping we get  $$\Omega(\bigsqcup_{i\in\mathbb{Z}}\Omega^4_{im+k}BG)=\Omega(\Omega_{\tilde k}^4BG),$$
and also
$$\Omega(\bigsqcup_{i\in\mathbb{Z}}\Omega^4_{im+k}BG)\times\Omega^7BG)=\Omega(\Omega_{\tilde k}^4BG\times\Omega^7BG),$$
where $\Omega_{ \tilde k}^4BG$ is the path component containing the map $\tilde k$. Observe that this result holds integrally. Applying the functor $\Omega(-)$ to the previous diagram, we  obtain the following homotopy commutative diagram

\begin{equation}\label{d:loopgauge}
\xymatrix{
\Omega {\rm{Map}}_*^0(C_{l,m},BG)\ar[r] \ar[d]_-{\tilde q_k^*} & \Omega^5BG \ar[r]^-{\Omega\gamma^*} \ar[d]_-{\Omega(m+\gamma)^*} &  \Omega^8BG \ar@{=}[d] \\
\Omega^5BG \ar[r] \ar[d]_-{\underline{\pi}^*_k} & \Omega^5BG \times\Omega^8BG \ar[r]^-{p_2^*} \ar[d]_-{\Omega g_k^*} &\Omega^8BG \ar[d] \\
\Omega{\rm Map}_*^k(M_{l,m},BG) \ar@{=}[r] & \Omega{\rm Map}_*^k(M_{l,m},BG) \ar[r]& \ast
}
\end{equation}
where we have identified already $\Omega (\Omega^7BG)$ with $\Omega^8BG$, and $\Omega(\Omega^4_k BG)$ with $\Omega(\Omega^4_0BG)\simeq \Omega(\Omega^4BG)\simeq \Omega^5BG$ for all $k$. Take adjoints in the diagram \eqref{d:loopgauge}. 
The adjoint of $\Omega \gamma^*$ is homotopic to $(\Sigma\gamma)^*$.
 Localising at a prime $p\geq5$ and using Lemma \ref{l:diagram2} we obtain a string of homotopy equivalences
$$\Omega {\rm{Map}}_*^0(C_{l,m},BG)\simeq_{}{\rm{Map}}_*(\Sigma C_{l,m},BG)\simeq_{(p)}\Omega^9BG\times\Omega^5BG,$$
and $\tilde q_k^*\simeq *\times m^*,$ where $m^*$ is the $m$-th power map.
\end{proof}

Now we give results on the homotopy decomposition of the pointed gauge groups. Recall that for any space $X$, the cofibration sequence $S^n\xrightarrow kS^n\to P^{n+1}(k)$ induces the following fibration sequence
$${\rm{Map}}_*(P^{n+1}(k),X)\to \Omega^nX\xrightarrow{k^*}\Omega^nX,$$
where $k^*$ is the $k$-th power map. Let $\Omega^nG\{k\}:={\rm{Map}}_*(P^{n+1}(k),BG).$ The following is a result on  the pointed gauge groups.

\begin{theorem}\label{t:PGG2}
Let $G$ be a simply connected simple compact Lie group with $\pi_6(G)= 0$. Let $ M_{l,m}$ be the total space of an $S^3$-bundle over $S^4$ such that $m\geq2.$ Localising at a prime $p\geq5$ there are homotopy equivalences $$\mathcal{G}_*^0(M_{l,m})\simeq_{(p)}\Omega^3G\{p^r\}\times \Omega^7G,$$ and for all $k\in\mathbb Z_m$ $$\Omega\mathcal{G}_*^k(M_{l,m})\simeq_{(p)}\Omega^4G\{p^r\}\times \Omega^8G,$$ where $r = v_p(m)$ is the valuation of $m$ at $p$.
\end{theorem}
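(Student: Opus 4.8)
The plan is to prove the two assertions separately: the first unlooped, directly from the $p$-local splitting of $\Sigma M_{l,m}$, and the second only after one further loop, from Lemma \ref{hofibret}.

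For $k=0$ I would work with the null component and adjoint. By \eqref{Atiyahunn}, $\mathcal{G}^0_*(M_{l,m})\simeq\Omega\,{\rm{Map}}^0_*(M_{l,m},BG)\simeq{\rm{Map}}_*(\Sigma M_{l,m},BG)$, the last step being the loop--suspension adjunction based at the null map. By Proposition \ref{torman}, after localising at $p\geq5$ one has $\Sigma M_{l,m}\simeq_{(p)}P^5(p^r)\vee S^8$ with $r=v_p(m)$, so ${\rm{Map}}_*(\Sigma M_{l,m},BG)\simeq_{(p)}{\rm{Map}}_*(P^5(p^r),BG)\times\Omega^8 BG$. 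Here $\Omega^8 BG\simeq\Omega^7G$, and writing $P^5(p^r)=\Sigma P^4(p^r)$ and using the cofibration $S^3\xrightarrow{p^r}S^3\to P^4(p^r)$ identifies ${\rm{Map}}_*(P^5(p^r),BG)\simeq{\rm{Map}}_*(P^4(p^r),G)=\Omega^3G\{p^r\}$. Assembling the two factors gives the first homotopy equivalence.

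For arbitrary $k$ the component ${\rm{Map}}^k_*(M_{l,m},BG)$ is not the null one, so the suspension trick is unavailable and I would instead invoke Lemma \ref{hofibret}. Localised at $p\geq5$ it supplies a fibration sequence
\[
\Omega^9 BG\times\Omega^5 BG\xrightarrow{\ *\times m^*\ }\Omega^5 BG\xrightarrow{\ \underline{\pi}^*_k\ }\mathcal{G}^k_*(M_{l,m}),
\]
where \eqref{Atiyahunn} has been used to name the base. Since $\mathcal{G}^k_*(M_{l,m})$ appears here as the base, rather than the fibre or total space, I would extend the sequence one term to the left, so that $\Omega\mathcal{G}^k_*(M_{l,m})$ is the homotopy fibre of $*\times m^*$. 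Now $*\times m^*$ is the composite of the projection $\Omega^9 BG\times\Omega^5 BG\to\Omega^5 BG$ with the $m$-th power map $m^*$, so its homotopy fibre splits as $\Omega^9 BG$ times the homotopy fibre of $m^*:\Omega^5 BG\to\Omega^5 BG$, and the latter is ${\rm{Map}}_*(P^6(m),BG)$ by the cofibration $S^5\xrightarrow{m}S^5\to P^6(m)$. After localisation ${\rm{Map}}_*(P^6(m),BG)\simeq_{(p)}{\rm{Map}}_*(P^6(p^r),BG)\simeq{\rm{Map}}_*(P^5(p^r),G)=\Omega^4G\{p^r\}$, while $\Omega^9 BG\simeq\Omega^8G$. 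Hence $\Omega\mathcal{G}^k_*(M_{l,m})\simeq_{(p)}\Omega^4G\{p^r\}\times\Omega^8G$ for every $k$, in agreement with the $k=0$ statement upon looping.

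Granting Lemma \ref{hofibret} and Proposition \ref{torman}, the argument is largely formal; the genuine difficulty has already been absorbed into Proposition \ref{torman}, whose proof uses the Cohen--Moore--Neisendorfer loop decomposition and the vanishing of Whitehead products on $H$-spaces to show that the top-cell attaching map of $M_{l,m}$ suspends trivially. At the level of this theorem the point that most needs care is that Lemma \ref{hofibret} realises $\mathcal{G}^k_*(M_{l,m})$ as the base of a fibration, so the clean product decomposition only becomes accessible after one loop --- which is exactly why for $k\neq0$ the theorem records $\Omega\mathcal{G}^k_*(M_{l,m})$, while for $k=0$ the separate splitting of $\Sigma M_{l,m}$ avoids the loop. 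The remaining subtlety is Moore-space bookkeeping: the degree-$m$ self-maps in play sit on $S^5$ (respectively $S^3$), so the torsion factors first present themselves as $P^6$- (respectively $P^5$-) mapping spaces, and one must desuspend a Moore space to rewrite them as $\Omega^4G\{p^r\}$ (respectively $\Omega^3G\{p^r\}$).
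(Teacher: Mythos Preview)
Your proposal is correct and follows essentially the same approach as the paper's own proof: for $k=0$ you adjoint and invoke the $p$-local splitting $\Sigma M_{l,m}\simeq_{(p)}P^5(p^r)\vee S^8$ from Proposition~\ref{torman}, and for general $k$ you use the fibration of Lemma~\ref{hofibret}, recognise $\Omega\mathcal{G}^k_*(M_{l,m})$ as the homotopy fibre of $*\times m^*$, and split that fibre as $\Omega^9BG$ times the fibre of the $m$-th power map. The Moore-space bookkeeping and the reason for the extra loop when $k\neq0$ are exactly as in the paper.
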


\begin{proof}
Observe that since Map$_*(M_{l,m},BG)$ is a pointed space with base point the constant map $*:M_{l,m}\to BG$, there is a homotopy equivalence $\Omega {\rm{Map}}_*^0(M_{l,m},BG)\simeq \Omega ({\rm{Map}}_*(M_{l,m},BG))$. Localise at a prime $p\geq 5$ and let $v_p(m)=r$. By Proposition \ref{torman}, there is a homotopy equivalence $$\Sigma M_{l,m}\simeq_{(p)} P^5(p^r)\vee S^8.$$ Thus we obtain a string of homotopy equivalences
 $$\Omega {\rm{Map}}_*^0(M_{l,m},BG)\simeq_{(p)} \Omega ({\rm{Map}}_*(M_{l,m},BG))\simeq_{(p)} {\rm{Map}}_*(\Sigma M_{l,m},BG)\simeq_{(p)}{\rm{Map}}_*(P^5(p^r)\vee S^8,BG).$$
Taking adjoints we obtain $${\rm{Map}}_*(P^5(p^r)\vee S^8,BG)\simeq_{(p)} {\rm{Map}}_*(P^4(p^r)\vee S^7,G)\simeq_{(p)}{\rm{Map}}_*(P^4(p^r),G)\times {\rm{Map}}_*( S^7,G).$$
Since $\mathcal{G}_*^0(M_{l,m})\simeq_{(p)}\Omega {\rm{Map}}_*^0(M_{l,m},BG)$, we get $\mathcal{G}_*^0(M_{l,m})\simeq_{(p)} \Omega^3G\{p^r\}\times \Omega^7G.$

Now let $\mathcal G^k_*(M_{l,m})$ be a gauge group with $k\neq0$. By Lemma \ref{hofibret} there is a fibration sequence
$$\Omega^9BG\times\Omega^5BG\xrightarrow {* \times m^*}\Omega^5BG \xrightarrow{\underline{\pi}^*_k} \Omega{\rm{Map}}^k_*(M_{l,m},BG),$$ 
where $m^*$ is the $m$-th power map, and the map $\underline{\pi}^*_k$ is identified with the composite
$$\Omega{\rm Map}_*^0(S^4,BG) \xrightarrow{\Omega\theta_k} \Omega{\rm Map}_*^k(S^4,BG) \xrightarrow{\Omega\pi^*_k} \Omega{\rm{Map}}^k_*(M_{l,m},BG),$$
where $\theta_k: \Omega^4_0BG \to \Omega^4_kBG$ is a homotopy equivalence.
Note that the homotopy fibre of the map $* \times m^*$ is homotopy equivalent to $\Omega^2{\rm{Map}}_*^k(M_{l,m},BG)$, which can be identified with the loop space of the pointed gauge group, $\Omega\mathcal G^k_*(M_{l,m}).$ Now identifying $\Omega^5_0BG$ with $*\times\Omega^5_0BG$ it is straightforward to check that there is a homotopy equivalence $$\Omega\mathcal G^k_*(M_{l,m})\simeq_{(p)}\Omega^4G\{p^r\}\times \Omega^8G,$$
as required.  
\end{proof}
Given a nilpotent space $X$, let $(\pi_m(X))_{(p)}$ be the localisation of the homotopy group $\pi_m(X)$ at a prime $p$. Using the theory of homotopy groups with coefficient (see \cite[Chapter 1]{Nei}) and Theorem \ref{t:PGG2} we can compute the homotopy groups $\pi_n((\mathcal G_*^0(M_{l,m}))_{(p)})$ and $\pi_n((\Omega\mathcal G_*^k(M_{l,m}))_{(p)})$. For $n+j\geq2$, let $\pi_{n+j}(G;\mathbb Z_{p^r})=[P^{n+j}(p^r),G]$.
 \begin{cor}\label{c:hg_ggt}
 Let $p\geq5$ be a prime and $v_p(m)=r$. For every $n\geq 0$  there are isomorphisms
\begin{enumerate}[(1)]
\item $\pi_n((\mathcal G_*^k(M_{l,m}))_{(p)})\cong\pi_{n+3}(G;\mathbb Z_{p^r})\oplus(\pi_{n+7}(G))_{(p)}$, for $k=0$;
\item $\pi_n((\Omega\mathcal G_*^k(M_{l,m}))_{(p)})\cong\pi_{n+4}(G;\mathbb Z_{p^r})\oplus(\pi_{n+8}(G))_{(p)}$, for all $k\in\mathbb Z_m$. \qed
 \end{enumerate}
\end{cor}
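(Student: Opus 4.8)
The final statement (Corollary \ref{c:hg_ggt}) is a direct computation of homotopy groups from the product decompositions established in Theorem \ref{t:PGG2}. The strategy is to simply apply $\pi_n(-)$ to each factor of the $p$-local product decompositions, identify the resulting groups with homotopy groups with coefficients, and use that $\pi_n$ of a product is the direct sum of the $\pi_n$'s of the factors.

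\medskip

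\emph{Step 1.} Start from the $p$-local homotopy equivalences of Theorem \ref{t:PGG2}:
$$\mathcal{G}_*^0(M_{l,m})\simeq_{(p)}\Omega^3G\{p^r\}\times \Omega^7G,\qquad \Omega\mathcal{G}_*^k(M_{l,m})\simeq_{(p)}\Omega^4G\{p^r\}\times \Omega^8G,$$
where $r=v_p(m)$. Since the homotopy groups of a product space decompose as a direct sum of the homotopy groups of the factors, and since $p$-localisation commutes with taking homotopy groups (so $\pi_n(X_{(p)})\cong(\pi_n(X))_{(p)}$ for $X$ nilpotent), we obtain
$$\pi_n\big((\mathcal{G}_*^0(M_{l,m}))_{(p)}\big)\cong\pi_n(\Omega^3G\{p^r\})\oplus\pi_n(\Omega^7G),$$
and similarly in the $k\neq0$ case with $\Omega^3G\{p^r\}$ replaced by $\Omega^4G\{p^r\}$ and $\Omega^7G$ by $\Omega^8G$, all localised at $p$.

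\medskip

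\emph{Step 2.} Identify the two types of factor. For the loop-space factor, $\pi_n(\Omega^jG)\cong\pi_{n+j}(G)$ by the loop–suspension adjunction, giving the summands $(\pi_{n+7}(G))_{(p)}$ and $(\pi_{n+8}(G))_{(p)}$ respectively. For the factor $\Omega^jG\{p^r\}={\rm Map}_*(P^{j+1}(p^r),BG)$, recall $\Omega^jG\{p^r\}$ was defined so that $\Omega^j G\{p^r\}\simeq{\rm Map}_*(P^{j+1}(p^r),BG)$; taking $\pi_n$ and using the loop-space adjunction with $BG$ one gets
$$\pi_n(\Omega^jG\{p^r\})\cong\pi_n\big({\rm Map}_*(P^{j+1}(p^r),BG)\big)\cong[S^n\wedge P^{j+1}(p^r),BG]\cong[P^{n+j+1}(p^r),BG],$$
using that $S^n\wedge P^{j+1}(p^r)\simeq P^{n+j+1}(p^r)$ (smashing a Moore space with a sphere shifts dimension). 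By the definition $\pi_{n+j}(G;\mathbb{Z}_{p^r})=[P^{n+j}(p^r),G]$ and the adjunction $[P^{n+j+1}(p^r),BG]\cong[\Sigma^{-1}P^{n+j+1}(p^r),G]=[P^{n+j}(p^r),G]$, this equals $\pi_{n+j}(G;\mathbb{Z}_{p^r})$. Plugging $j=3$ in the $k=0$ case and $j=4$ in the $k\neq0$ case yields the summands $\pi_{n+3}(G;\mathbb{Z}_{p^r})$ and $\pi_{n+4}(G;\mathbb{Z}_{p^r})$. Assembling Steps 1 and 2 gives precisely the two claimed isomorphisms.

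\medskip

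\emph{Main obstacle.} There is no serious obstacle here — the corollary is bookkeeping built on Theorem \ref{t:PGG2}. The only point requiring a little care is the identification of $\pi_n(\Omega^jG\{p^r\})$ with the homotopy group with coefficients $\pi_{n+j}(G;\mathbb{Z}_{p^r})$: one must keep the dimension shifts straight (the extra $+1$ from the Moore-space indexing convention $\tilde H_{n-1}(P^n(p^r))=\mathbb{Z}_{p^r}$ versus the loop/suspension shift against $BG$), and one should note that working $p$-locally with $p\geq5$ ensures everything in sight is nilpotent so that localisation is well-behaved and commutes with $\pi_n$. One could alternatively phrase Step 2 via the Puppe sequence of $S^{n+j}\xrightarrow{p^r}S^{n+j}\to P^{n+j+1}(p^r)$ applied to $[-,BG]$, recovering the standard short exact sequence $0\to\pi_{n+j+1}(G)\otimes\mathbb{Z}_{p^r}\to\pi_{n+j}(G;\mathbb{Z}_{p^r})\to {}_{p^r}\pi_{n+j}(G)\to0$, but this is not needed for the statement as written.
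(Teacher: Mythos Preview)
Your proposal is correct and matches the paper's approach exactly: the corollary is stated with a \qed in the paper, with only the preceding sentence indicating that it follows immediately from Theorem~\ref{t:PGG2} together with the definition $\pi_{n+j}(G;\mathbb{Z}_{p^r})=[P^{n+j}(p^r),G]$. Your Steps~1 and~2 simply make explicit the bookkeeping the paper leaves implicit.
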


Now by Proposition \ref{bundles}, $M_{l,m}\xrightarrow{\pi}S^4$ induces a surjection $\pi^*:[S^4,BG]\rightarrow[M_{l,m},BG]$  if $m>1.$ Therefore, by the naturality of the evaluation map, we obtain a commutative diagram
\begin{equation}\label{evdiag}
\xymatrix{ 
\Omega G\ar[r]^-{\Omega\phi^k}\ar@{=}[d]&\Omega{\rm{Map}}_{*}^k(S^4,BG)\ar[r]\ar[d]^-{\pi^{*}_k}&
\Omega{\rm{Map}}^k(S^4,BG)\ar[r]^-{ev_k}\ar[d]^-{\pi^{*}_k}&G\ar@{=}[d]\\
\Omega G\ar[r]^-{\Omega\partial^k}&\Omega{\rm{Map}}_{*}^k(M_{l,m},BG)\ar[r]&\Omega{\rm{Map}}^k(M_{l,m},BG)
\ar[r]^-{ ev_k}
&G
}
\end{equation}
which defines the map $\Omega\phi^k$. Recall that if $m\geq2$ then ${\rm{Map}}^{}(M_{l,m},BG)$ has $m$ components and $${\rm{Map}}(S^4,BG)\xrightarrow{\pi^*_k}{\rm{Map}}(M_{l,m},BG)$$ sends the $\bar k$-th component of ${\rm{Map}}(S^4,BG)$ to the $\bar k$-th component of ${\rm{Map}}(M_{l,m},BG),$ where $ k$ is the mod  $m$ reduction of $\bar k.$

\begin{proof}[Proof of Theorem \ref{t:UGGT}]
 Localise all spaces at a prime $p\geq5$, so that  $\pi_6(G) \cong 0$ for any simply connected simple compact Lie group. Suppose that $v_p(m)=0.$ Then $M^4_{l,m}=P^4(m)\simeq*$ and therefore $M_{l,m}\simeq S^7.$ Thus in this case there is only one principal $G$-bundle over $M_{l,m}$ up to isomorphism, namely, the trivial bundle. Since the map $ev_0$ in \eqref{e:starstar} has a section and this is a principal fibration we obtain a $p$-local homotopy equivalence
$$\mathcal{G}^0(M_{l,m})\simeq_{(p)} \Omega^7G\times G.$$
Now suppose that $v_p(m)\geq1.$ By Theorem \ref{t:PGG2}, there is a homotopy equivalence $$\Omega\mathcal G^k_*(M_{l,m})\simeq_{(p)}\Omega^8G\times\Omega^4G\{m\}.$$ Moreover, by Lemma \ref{hofibret} there is a fibration sequence
\begin{equation}\label{loop}
\xymatrix{
\Omega^8G\times\Omega^4G\{m\}\ar[r]^-{\delta^*}&\Omega^8G\times \Omega^4G\ar[r]^-{*\times m^*}&\Omega^4G\ar[r]^-{\underline{\pi}_*^k}&\Omega{\rm{Map}}^k_*(M_{l,m},BG)\simeq\mathcal G^k_*(M_{l,m}).
}
\end{equation}
Here we identify $\Omega^5BG$ with $\Omega^4G$ and $\Omega^9BG\times\Omega^5 BG$ with $\Omega^8G\times\Omega^4G$. This implies that $\delta^*\simeq\mathbbm{1}\times j$, where $j$ is the inclusion map. The left-hand side square of the homotopy commutative diagram \eqref{evdiag} shows that the bottom square of the diagram \eqref{diagloop} homotopy commutes. The whole diagram \eqref{diagloop} is generated taking fibres along the maps $\Omega\phi^k$, $\Omega\partial^k$ and $\underline{\pi}^*_k$, which defines the map $h$.

\begin{equation}\label{diagloop}
\begin{gathered}
\xymatrix{
{*}\ar[r]\ar[d]&\Omega^8G\times\Omega^4G\{m\}\ar@{=}[r]\ar[d]^-{j'}&\Omega^8G\times\Omega^4G\{m\}\ar[d]^{{\mathbbm{1}}\times j}\\
\Omega\mathcal{G}^k(S^4)\ar[r]^-{}\ar@{=}[d]^{}&\Omega\mathcal{G}^k(M_{l,m})\ar[r]^-h\ar[d]^{}&\Omega^8G\times\Omega^4G\ar[d]^{*\times m^{*}}\\
\Omega\mathcal{G}^k(S^4)\ar[r]^{}\ar[d]&\Omega G\ar[r]^-{\Omega\phi^k}\ar[d]^-{\Omega\partial^k}&\Omega^4G\ar[d]^{\underline{\pi}^*_k}\\
{*}\ar[r]&\Omega{\rm{Map}}^{k}_{*}(M_{l,m},BG)\ar@{=}[r]&\Omega{\rm{Map}}^{k}_{*}(M_{l,m},BG)
}
\end{gathered}
\end{equation}

Let $\bar h$ be the composite
$$\xymatrix{
\Omega \mathcal{G}^k(M_{l,m})\ar[r]^-h&\Omega^8G\times \Omega^4G\ar[r]^-{p_1}&\Omega^8G,
}$$
where $p_1$ is the projection onto the first factor. The top square of \eqref{diagloop} shows that $\bar h$ has a right homotopy inverse. Let $X_k$ be the homotopy fibre of the map $\bar h.$ Then there is a homotopy equivalence 
\begin{equation}\label{equivtor}
 \Omega\mathcal{G}_k(M_{l,m})\simeq_{(p)} X_k\times \Omega^9BG.
 \end{equation}
Finally from \eqref{diagloop} and \eqref{equivtor} there exists a homotopy pullback square
\begin{equation*}\label{diagram}
\begin{gathered}
\xymatrix{
&\Omega^4G\{m\}\ar@{=}[r]\ar[d]&\Omega^4G\{m\}\ar[d]^{}\\
\Omega\mathcal{G}^k(S^4)\ar[r]^-{}\ar@{=}[d]^{}&X_k\ar[r]\ar[d]^{}&\Omega^4G\ar[d]^{m^{*}}\\
\Omega\mathcal{G}^k(S^4)\ar[r]^{}&\Omega G\ar[r]^-{\Omega\phi^k}&\Omega^4G.
}
\end{gathered}
\end{equation*}

Let $r=v_p(m)$ and let $q^*$ be the connecting map of the fibration sequence
$$\Omega^3G\{m\}\to\Omega^3G\xrightarrow{m^*}\Omega^3G.$$ 
 Observe that after localisation $q^*\circ\Omega \phi^k\simeq q^*\circ\Omega\phi^{k'}$ if $k\equiv k'\pmod{p^r}$.  It follows that if $p^r$ divides $k$ then $q^*\circ\Omega\phi^k$ is nullhomotopic, 
and the map $\Omega\phi^k$ lifts through $m^*$. Therefore, by the properties of the pullback there is a map $\zeta:\Omega G\to X_k$ which is a homotopy section. Thus in this case we have a splitting $X_k\simeq_{(p)} \Omega G\times \Omega^4G\{m\}$.
\end{proof}

\begin{remark}
As in the torsion-free case, the inclusion of the pointed gauge group $\mathcal G_*^k(M_{l,m})\hookrightarrow \mathcal G^k(M_{l,m})$ induces isomorphisms $\pi_n(\mathcal G^k_*(M_{l,0}))\xrightarrow{i^*}\pi_n(\mathcal G^k(M_{l,0}))$ if $n\leq 2$. Using Corollary  \ref{c:hg_ggt} and the exact sequence (see \cite[Chapter 1]{Nei}) 
\begin{equation}
0\to\pi_{n+3}(G)\otimes\mathbb Z_{p^r}\to\pi_{n+3}(G;\mathbb Z)\to Tor(\pi_{n+2}(G),\mathbb Z_{p^r})\to 0
\end{equation}
we compute the path components of the $p$-localisations of $\mathcal G^0(M_{l,m})$ at $p\geq5$ 
\begin{equation*}
\pi_0((\mathcal G^0(M_{l,m}))_{(p)})=\begin{cases}\mathbb Z_{p^r}\times\mathbb Z_{(p)}\times\mathbb Z_{(p)}& G=Spin(8)\\
\mathbb Z_{p^r}\times\mathbb Z_{(p)}& G=Sp(n)(n\geq2),SU(n)(n\geq4), \\ & Spin(m)(m=5,6,7\text{ or }m\geq9)\\
\mathbb Z_{p^r} & G=F_4,E_6,E_7,E_8.
\end{cases}
\end{equation*}
Notice that if $k\neq0$, we cannot compute $\pi_0((\mathcal G^k(M_{l,m}))_{(p)})$ with our results.
\end{remark}

\section{Counting homotopy types of gauge groups over $S^7$}
\label{s:homos7}

In this section we discuss the classification of the homotopy types of the gauge groups over manifolds $M_{l,m}$ for $m=1$. As all manifolds $M_{l,1}$ are homotopy equivalent to $S^7,$ the following results will be expressed in terms of $S^7.$ The set $Prin_G(S^7)$ of isomorphism classes of principal $G$-bundles over $S^7$ is in one-to-one correspondence with the set $\langle S^7,BG\rangle$. Observe that, by connectivity of $G$, $\langle S^7,BG\rangle=\pi_6(G).$ In Table \ref {tab1} we collect  information on the homotopy groups $\pi_6(G).$ Here $G^*$ is any of the simply connected simple compact Lie groups not isomorphic to $SU(3)$, $G_2$ or $SU(2) \cong Sp(1).$
\begin{table}[ht]
\begin{center}
\caption{}
\label{tab1}
\begin{tabular}{|c |c c c c|}
\hline
$G$&$SU(2)$&$SU(3)$&$G_2$&$G^*$\\
\hline
$\pi_6(G)$&$\mathbb{Z}_{12}$&$\mathbb{Z}_6$&$\mathbb{Z}_3$&0\\
\hline
\end{tabular}
\end{center}
\end{table}

 Let $P_k\to S^7$ be a principal $G$-bundle classified by the map $k\epsilon$, where $\epsilon$ is a generator of the group $\pi_6(G)$ and $k\in\mathbb Z_{|\pi_6(G)|}.$
 We have seen already that as $S^7$ is a co-$H$-space, there are homotopy equivalences $${\rm{Map}}^k_*(S^7,BG)\simeq{\rm{ Map}}^0_*(S^7,BG),$$  which implies that there exist homotopy equivalences 
 $$\mathcal{G}^k_*(S^7)\simeq\mathcal{G}^0_*(S^7),$$ for all $k\in\pi_6(G)$. In what follows we discuss the results on the homotopy classification of the unpointed gauge groups over $S^7.$
 
Consider the fibration sequence
\begin{equation}\label{ev}
\xymatrix{
\mathcal{G}^k(S^7)\ar[r]&G\ar[r]^-{\bar\partial^k}&{\rm{Map}}^k_*(S^7,BG)\ar[r]&{\rm{Map}}^k(S^7,BG)\ar[r]^-{ev}&BG
}
 \end{equation}
where $ev$ is the evaluation map. 
In \cite{Lng} Lang showed that the adjoint of the connecting map $\bar\partial^k$ of the evaluation fibration is a Whitehead product. Since there is a homotopy equivalence 
$${\rm{Map}}_*(S^7, BG)\simeq {\rm{Map}}_{*}(S^6,G),$$
we restate the result of Lang in terms of Samelson products. 
 \begin{lemma}[Lang \cite{Lng}]\label{l:lang}~
Let $G$ be a simply connected simple compact Lie Group. The adjoint $S^6\wedge G\xrightarrow{\partial_k} G$ of the composite
 $$\partial^k: G\xrightarrow{\bar\partial^{k}}{\rm{Map}}^k_*(S^6,G) \xrightarrow\simeq {\rm{Map}}^0_*(S^6,G)$$
 is homotopic to the Samelson product $\langle k\epsilon,{1}_G\rangle.$
\qed
\end{lemma}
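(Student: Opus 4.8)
The plan is to reduce the statement to Lang's theorem \cite{Lng}, which identifies the adjoint of the connecting map $\bar\partial^k$ of the evaluation fibration \eqref{ev} with a Whitehead product, and then translate that statement from Whitehead products to Samelson products via the standard adjunction. First I would recall the evaluation fibration \eqref{ev}: the connecting map $\bar\partial^k : G \to {\rm{Map}}^k_*(S^7,BG)$ is, up to the homotopy equivalence ${\rm{Map}}_*(S^7,BG)\simeq {\rm{Map}}_*(S^6,G)$ obtained by looping $BG$, the map whose \emph{adjoint} $G \wedge S^6 \to G$ (equivalently $S^6 \wedge G \to G$, using the commutativity of the smash product) is what we wish to identify. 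Lang's result is precisely that this adjoint is the Whitehead product $[k\epsilon, \mathbbm{1}_G]$ of the classifying map $k\epsilon \in \pi_7(BG) \cong \pi_6(G)$ with the identity map on $BG$, where one uses that $[X \wedge S, Y]$-type Whitehead products of a map $S^7 \to BG$ with $\mathbbm{1}_{BG}: BG \to BG$ live naturally in $[S^6 \wedge G, G]$ after looping.

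The key step is then the passage from Whitehead products to Samelson products. Recall that for a topological group $G$ with classifying space $BG$, the looping functor induces an isomorphism $\pi_{n}(BG) \xrightarrow{\cong} \pi_{n-1}(G)$, and under this correspondence the Whitehead product $[\alpha,\beta]$ in $\pi_*(BG)$ corresponds, up to sign, to the Samelson product $\langle \Omega\alpha, \Omega\beta \rangle$ in $\pi_*(G)$; more precisely, the Whitehead product pairing $\pi_{p}(BG)\otimes\pi_{q}(BG) \to \pi_{p+q-1}(BG)$ corresponds to the Samelson product pairing $\pi_{p-1}(G)\otimes\pi_{q-1}(G)\to\pi_{p+q-2}(G)$. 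Applying this to $\alpha = k\epsilon \in \pi_7(BG)$ and $\beta = \mathbbm{1}_{BG}$ (thought of as the fundamental class), the Whitehead product $[k\epsilon,\mathbbm{1}_{BG}]$ corresponds to the Samelson product $\langle k\epsilon, \mathbbm{1}_G \rangle : S^6 \wedge G \to G$, where here $k\epsilon$ is now regarded as an element of $\pi_6(G)$. Since $\langle k\epsilon, \mathbbm{1}_G\rangle = k\langle \epsilon, \mathbbm{1}_G\rangle$ by bilinearity of the Samelson product, this is exactly the claimed identification of $\partial_k$.

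The main obstacle I anticipate is bookkeeping rather than conceptual: one must be careful about (i) the identification of the domain $S^6 \wedge G$ versus $G \wedge S^6$ and the resulting sign, (ii) the precise form of the homotopy equivalence ${\rm{Map}}^k_*(S^7,BG)\simeq{\rm{Map}}^0_*(S^6,G)$ used to pass from $\bar\partial^k$ to $\partial^k$ — one should check this equivalence is compatible with translating components, which it is because for a co-$H$-space source all path components of the mapping space are homotopy equivalent — and (iii) verifying that Lang's theorem, as originally stated for the Whitehead product form of the connecting map, indeed yields the Samelson product after this adjunction with the correct sign convention. Given these are all standard facts about the interplay between loop-space adjunction, Whitehead products, and Samelson products (see for instance the treatment in \cite{Lng} and standard references on Samelson products), the argument is essentially a careful unwinding of definitions, and I would present it as such, citing Lang for the substantive input.
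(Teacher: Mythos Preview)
Your proposal is correct and matches the paper's approach exactly: the paper does not give a proof of this lemma at all, but simply cites Lang for the Whitehead-product form of the connecting map and then restates it in Samelson-product language via the adjunction ${\rm Map}_*(S^7,BG)\simeq{\rm Map}_*(S^6,G)$, which is precisely the translation you carry out in detail.
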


It is clear that the order of $\partial_k$ is bounded by both the number of principal $G$-bundles and the order of $[\Sigma^6G,G]$. There is a general result proved by Theriault \cite[Lemma 3.1]{Th2}, that can be used to get information on the $p$-local homotopy types of the gauge groups. Let $Y$ be an $H$-space with a homotopy inverse, let $k:Y\rightarrow Y$ be the $k$-th power map, and let $F_k$ be the homotopy fibre of the map $k\circ f$, where $f:X\to Y$ is a map of finite order $m$.
\begin{lemma}[Theriault \cite{Th2}] \label{Th2L}
Let $X$ be a space and $Y$ be an $H$-space with a homotopy inverse. Suppose there is a map $X\overset{f}{\rightarrow} Y$ of finite order $m$. If $(m,k)=(m,k')$ then $F_k$ and $F_{k'}$ are homotopy equivalent when localised rationally or at any prime.\qed
\end{lemma}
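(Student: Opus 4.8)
Since the conclusion is itself phrased in terms of localisations, the plan is to fix a prime $p$, produce a $p$-local homotopy equivalence $F_k\simeq_{(p)}F_{k'}$, and then argue rationally by the same mechanism. The tool used throughout is that on an $H$-space $Y$ with a homotopy inverse the $a$-th power map $a\colon Y\to Y$ induces multiplication by $a$ on homotopy groups, and hence is a homotopy equivalence once the primes dividing $a$ are inverted.

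First I would record an elementary number-theoretic fact: if $(m,k)=(m,k')=d$, then there is an integer $a$ with $(a,m)=1$ and $k'\equiv ak\pmod m$. Indeed $(k/d,m/d)=1$ lets one solve $a\equiv(k'/d)(k/d)^{-1}\pmod{m/d}$, and the resulting residue, already prime to $m/d$, can be lifted by the Chinese Remainder Theorem to an integer prime to all of $m$. Writing $k'=ak+bm$ and using that $f$ has order $m$, so that the self-map $x\mapsto f(x)^m$ is nullhomotopic, one then gets $k'\circ f\simeq(ak)\circ f\simeq a\circ(k\circ f)$; here the $H$-structure and homotopy inverse are what legitimise the power maps and the additive notation on $[X,Y]$.

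Now fix a prime $p$. If $p\mid m$, then $p\nmid a$, so $a\colon Y\to Y$ is a $p$-local homotopy equivalence, and the homotopy-commutative square with top row $X\xrightarrow{k\circ f}Y$, bottom row $X\xrightarrow{k'\circ f}Y$, left map $\mathrm{id}_X$ and right map $a$ induces a $p$-local equivalence between the homotopy fibres, that is $F_k\simeq_{(p)}F_{k'}$. If instead $p\nmid m$, then $f$ has order prime to $p$, hence $f\simeq_{(p)}\ast$; then $k\circ f$ and $k'\circ f$ are both $p$-locally nullhomotopic, and since the homotopy fibre of a nullhomotopic map $X\to Y$ is $X\times\Omega Y$ we get $F_k\simeq_{(p)}X\times\Omega Y\simeq_{(p)}F_{k'}$. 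Rationally the second argument applies verbatim, $f$ being rationally trivial because it has finite order.

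I expect the only genuine subtlety to lie in the number-theoretic step — arranging $a$ to be prime to all of $m$ rather than merely to $m/d$ — together with checking that $k'\circ f\simeq a\circ(k\circ f)$ really does follow from $f$ having order $m$. The conceptual reason the statement cannot be made integral is visible in the argument itself: the $a$-th power map is a self-equivalence of $Y$ only after inverting the primes dividing $a$, which is precisely why one must separate the primes dividing $m$ (where the power-map argument runs) from those not dividing $m$ (where $f$ simply vanishes).
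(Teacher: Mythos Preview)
The paper does not supply its own proof of this lemma; it is quoted from Theriault's paper and the statement terminates with a bare \qed. Your argument is correct and is essentially the standard proof (and the one in the cited source): manufacture an integer $a$ coprime to $m$ with $ak\equiv k'\pmod m$, so that $k'\circ f\simeq a\circ(k\circ f)$; at primes $p\mid m$ the $a$-th power map on $Y$ is a $p$-local self-equivalence and the evident commutative square identifies the two fibres via the five lemma, while at primes $p\nmid m$ (and rationally) the map $f$ itself becomes null and both fibres split as $X\times\Omega Y$. The number-theoretic step you flag is indeed the only place requiring care, and your Chinese Remainder Theorem lift handles it correctly.
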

 Now we are ready to give the proof on the classification of gauge groups on manifolds homotopy equivalent to $S^7$.

\begin{proof}[Proof of Theorem \ref{t:S}] ~
Let $\epsilon$ be a generator of $\pi_6(G)$. Given a principal $G$-bundle over $S^7$ classified by an element $k\epsilon\in\pi_6(G)$, we identify the gauge group $\mathcal G^k(S^7)$ and its classifying space $B\mathcal G^k(S^7)$ with $\Omega{\rm{Map}}^k(S^7,BG)$ and ${\rm{Map}}^k(S^7,BG)$, respectively.
\begin{enumerate}[(1)]
\item Let $G=SU(2)$. We identify the Lie group $SU(2)$ with the unit quaternions $S^3$.  By Lemma \ref{l:lang}, the adjoint $\partial_k$ of the connecting map  $\partial^k$ is homotopic to the Samelson product $\langle k \epsilon,\iota_3\rangle$,   where $\iota_3:S^3\rightarrow S^3$ is the identity map on $S^3$. Observe that $$\langle k \epsilon,\iota_3\rangle\in[\Sigma^6G,G]=[\Sigma^6 S^3,S^3]\cong\pi_9(S^3).$$ According to \cite{Tod}, $\pi_9(S^3)\cong\mathbb{Z}_3$, and so the order of $\partial^k$ is at most 3. From the evaluation fibration
$$\xymatrix{
\Omega^6_0S^3\simeq{\rm{Map}}^k_*(S^7,BS^3)\ar[r]^-{i^{}}&{\rm{Map}}^k(S^7,BS^3)\ar[r]^-{ev}&BS^3
} $$
we obtain the following commutative diagram 
\begin{equation}\label{eseq}
\xymatrix@=0.3in{
\pi_3(S^3)\ar[r]^-{}\ar[dr]_-{\partial_k}&\pi_3(\Omega^6_0 S^3)\ar[r]\ar[d]^{\cong}& \pi_3({\rm{Map}}^k(S^7,BS^3))\ar[r]&0,\\
&[S^6\wedge S^3,S^3]\ar[ur]&&
}
\end{equation}
where $\partial_k(f)=\langle k \epsilon,f\rangle$ for any $f\in\pi_3(S^3)\cong\mathbb{Z}$. Thus $\pi_3({\rm{Map}}^k(S^7,BS^3))$ is isomorphic to the cokernel of $\partial_k$. Linearity of the Samelson product implies that $\langle k \epsilon,\iota_3\rangle\simeq k\langle \epsilon,\iota_3\rangle$. Thus we only have to determine the order of $\langle \epsilon,\iota_3\rangle$, that is, the order of the adjoint of $\partial_1$.

Notice that if $\mathcal{G}^k(S^7)\simeq \mathcal{G}^{k'}(S^7)$ then $\pi_n(\mathcal{G}^k(S^7))\cong\pi_n(\mathcal{G}^{k'}(S^7))$ for all $n\geq0$. In particular, from \eqref{eseq} we obtain $$\pi_2(\mathcal{G}^k(S^7))\cong\pi_3(\mathcal G^k(S^7))\cong \pi_3(\mathcal G^{k'}(S^7))\cong \pi_2(\mathcal{G}^{k'}(S^7)).$$

The Samelson product $\langle \iota_3,\iota_3\rangle\in\pi_6(S^3)$ is a generator of $\pi_6(S^3)\cong\mathbb{Z}_{12}$ \cite{S}. Hence, the adjoint of the map $\partial_1$ is homotopic to the iterated commutator map $\langle\langle\iota_3,\iota_3\rangle,\iota_3\rangle\in\pi_9(S^3)\cong\mathbb Z_3$.
According to \cite[Theorem 2]{KK2}, $SU(2)\cong S^3$ localised at $p = 3$ is nilpotent of class $3$. This implies that, integrally, $\partial_1\simeq\langle\langle\iota_3,\iota_3\rangle,\iota_3\rangle$ is essential and it is a generator of $\pi_9(S^3)$. 
 Let $B=BS^3=\mathbb HP^\infty$ and $\gamma=\langle\langle\iota_3,\iota_3\rangle,\iota_3\rangle$. 
Since the map $\gamma$ is a generator of $\pi_3(\Omega^7B)\cong\pi_9(S^3)\cong\mathbb Z_3$, there are homotopy commutative diagrams
\begin{equation}
\xymatrix@=0.45in{
\mathcal G^k(S^7)\ar[r]\ar[d]^-{\simeq}&S^3\ar[r]^-{k\gamma}\ar@{=}[d]&\Omega^7B\ar[d]^{h}\\
\mathcal G^{k'}(S^7)\ar[r]&S^3\ar[r]^-{k'\gamma}&\Omega^7 B}
\end{equation}
 if and only if $(3,k) = (3,k')$, where $h:\Omega^7B\to\Omega^7B$ is either the identity or the homotopy equivalence defined by the rule $x\mapsto x^{-1}$.

\item Let $G=G_2$ and let $\iota:S^3\hookrightarrow G_2$ be a generator of $\pi_3(G_2)$. The map $\langle \iota,\iota\rangle$ represents a generator of $\pi_6(G_2)$ \cite{Mim2}. Thus we have $\partial^{1}\simeq \langle \langle \iota,\iota\rangle,{1}_{G_2}\rangle:S^6\wedge G_2\overset{}{\longrightarrow} G_2$. Consider the following composite
 \begin{equation*}
\xymatrix@=0.60in{
 \theta:S^6\wedge S^3\ar[r]^{\mathbbm{1}\wedge \iota}&S^6\wedge G_2\ar[r]^-{\langle\langle \iota,\iota\rangle,{1}_{G_2}\rangle}&G_2.
 }
\end{equation*}
Thus $\theta=\langle\langle \iota,\iota\rangle,\iota\rangle$. We claim that $\theta$ is not nullhomotopic.
Localise at $p=3$. According to \cite[Chapter 19]{Jam2}, there exists a $p$-local homotopy fibration
\begin{equation*}
S^3\overset{}{\hookrightarrow}G_2\longrightarrow S^{11}.
\end{equation*}
Thus by connectivity the map induced by the inclusion of the bottom cell into $G_2$ induces a homomorphism $i^{*}:\pi_m(S^3)\rightarrow \pi_m(G_2)$ which is an isomorphism for $m\leq 9$. By our previous argument, the map $\langle\langle\iota_3,\iota_3,\rangle,\iota_3\rangle\in\pi_9(S^3)$ is essential at $p=3$. Therefore the map $\theta$ has order 3. Thus $\theta$ generates $\pi_9(G_2)\cong\mathbb{Z}_3$. By definition, $\theta$ is the restriction of $\partial_1$ to $S^6\wedge S^3\subset S^6\wedge G_2$. Thus localising at 3, the order of $\partial_1$ is divisible by 3. Now, from Proposition \ref{bundles} we know that $Prin_{G_2}(S^7)=\pi_6(G_2)\cong\mathbb{Z}_3$. Thus, as there are 3 isomorphism classes of principal $G_2$-bundles over $S^7$, the order of the map $\partial_1$ is at most 3. The upper and the lower bounds of the order of $\partial_1$ coincide. Therefore the order of the connecting map $\partial^1$ is 3. Using an exact sequence as in \eqref{eseq} and the homotopy groups of spheres we obtain an exact sequence
$$\xymatrix{
\mathbb{Z}\ar[r]^{\partial_k}&\mathbb{Z}_3\ar[r]^-{i^{*}}&\pi_3({\rm{Map}}^k(S^7,BG_2))\ar[r]&0.
}$$
Therefore $| coker\partial_k|=(3,k)$. Thus if $\pi_3({\rm{Map}}^k(S^7,BG_2))\cong \pi_3({\rm{Map}}^{k'}(S^7,BG_2))$ then $(3,k)=(3,k').$

Finally a simple application of Lemma \ref{Th2L} shows that localised rationally or at any prime $\mathcal{G}_k(S^7)\simeq \mathcal{G}_{k'}(S^7)$ whenever $(3,k)=(3,k')$ and $G = G_2$. 

\item 
Suppose all spaces are localised at a prime $p\geq 3$. We can get an upper bound on the order of $\partial_1$ at a prime $p\geq 3$ as follows. Integrally, the attaching map of the 5-cell in $SU(3)$ has order 2 \cite{Mim}. Therefore, after localising at $p\geq3$, there exist $p$-local homotopy equivalences $$SU(3)\simeq S^3\times S^5$$
\begin{equation*}
\Sigma^6 SU(3)\simeq S^9\vee S^{11}\vee S^{14}
\end{equation*}
Using the previous equivalences we obtain
\begin{eqnarray*}
[\Sigma^6 SU(3), SU(3)] &=&[S^9\vee S^{11}\vee S^{14}, S^3\times S^5]\\
&=&\pi_9(S^3\times S^5)\oplus\pi_{11}(S^3\times S^5)\oplus\pi_{14}(S^3\times S^5)
\end{eqnarray*} 
From the homotopy groups of the spheres \cite{Tod} we obtain $[\Sigma^6SU(3),SU(3)]\cong\mathbb{Z}_3^2\oplus\mathbb{Z}_7.$ Let $\beta$ be the order of $\partial_1\in [\Sigma^6SU(3),SU(3)]$. Then $\beta$ divides $|\mathbb{Z}_3^2 \oplus \mathbb{Z}_7| = 63$. We also have that $\beta\leq |Prin_{SU(3)}(S^7)|=6.$ Therefore the order of $\partial_1$ localised at a prime $p\geq 3$ is at most 3.  

Localised at $p=3$, the composite $\iota: S^3 \hookrightarrow SU(3)$ is a generator of $\pi_3(SU(3))$. Let $\langle \iota,\iota\rangle$ be a generator of $\pi_6(S^3)\cong\pi_6(SU(3))\cong\mathbb{Z}_3$. Consider the composite
\begin{equation*} 
 \xymatrix@=0.8in{
 S^9\cong S^6\wedge S^3\ar[r]^{\mathbbm{1}\wedge \iota}&S^6\wedge SU(3)\ar[r]^-{\langle\langle\iota,\iota\rangle,{1}_{SU(3)}\rangle}&SU(3).
 }
\end{equation*}

The element $\langle\langle\iota,\iota\rangle ,\iota\rangle$ is non-trivial in $\pi_9(SU(3))\cong\mathbb{Z}_3$. Therefore localised at $p=3$ the map $\langle\langle\iota,\iota\rangle,\iota\rangle$ has order 3. Thus using an exact sequence as in \eqref{eseq}, we see that if $\mathcal{G}^k(S^7)\simeq\mathcal{G}^{k'}(S^7)$ then $(3,k)=(3,k')$. Finally, applying Lemma \ref{Th2L} we complete the proof of (2).
\item If $G\neq SU(2),SU(3), G_2$ or $Sp(1)$, then $\pi_7(BG)\cong 0$. Thus there is a single principal $G$-bundle over $S^7$ which must be the trivial bundle, implying that the map $\partial^1$ is nullhomotopic. Therefore the principal fibration 
\begin{equation*}
\Omega{\rm{Map}}_*(S^7,BG)\to \mathcal{G}^0(S^7)\to G
\end{equation*}
splits and $\mathcal{G}^0(S^7)\simeq \Omega^7G\times G$.\qedhere
\end{enumerate}
\end{proof}

\begin{remark}
In \cite{AK}, A. Kono obtained an integral classification of the homotopy types of $SU(2)$-gauge groups over $S^4$ by using information on the $p$-local homotopy types of the gauge groups for all primes $p$ along with fracture theorems for nilpotent spaces (see \cite[Chapter 13]{MP}). In the case of $SU(3)$ gauge groups over $S^7$,  arguing along the lines of \cite{AK} it would be also possible to upgrade Theorem \ref{t:S} for $G=SU(3)$ to an integral statement, if the order of the the connecting map $\partial^1$ at $p=2$ was known.
\end{remark}


\bibliographystyle{amsplain}
\bibliography{bibliography}
\end{document}